\numberwithin{equation}{section}
\DeclareFontFamily{OT1}{pzc}{}
\DeclareFontShape{OT1}{pzc}{m}{it}{<-> s * [1.2] pzcmi7t}{}
\DeclareMathAlphabet{\mathpzc}{OT1}{pzc}{m}{it}
\def\End{\operatorname{End}}
\def\ker{\operatorname{Ker}}
\def\dim{\operatorname{dim}}
\def\sup{\operatorname{sup}}
\def\Re{\operatorname{Re}}
\def\Im{\operatorname{Im}}
\def\C{\mathbb{C}}
\def\R{\mathbb{R}}
\def\N{\mathbb{N}}
\newtheorem{thm*}{Theorem}
\newtheorem{thm}{Theorem}[section]
\newtheorem{cor}[thm]{Corollary}
\newtheorem{prop}[thm]{Proposition}
\newtheorem{problem*}[thm*]{Problem}
\theoremstyle{definition}
\newtheorem{definition}[thm]{Definition}
\newtheorem{definition*}[thm*]{Definition}
\theoremstyle{remark}
\newtheorem{remark}[thm]{Remark}
\newtheorem{question}[thm]{Question}
\begin{document}

\date{\today}

\title{The index of sub-laplacians: beyond contact manifolds}

\author{Magnus Goffeng, Bernard Helffer}

\address[M. Goffeng]{Centre for Mathematical Sciences, Lund University, Box 118, 221 00 LUND, Sweden}
\email{magnus.goffeng@math.lth.se}

\address[B. Helffer]{Laboratoire de Math\'ematiques Jean Leray, 
Nantes Universit\'e,
44 000 Nantes - FRANCE }
\email{bernard.helffer@univ-nantes.fr}

\maketitle

\begin{abstract}
In this paper we study the following question: do sub-Laplacian type operators have non-trivial index theory on Carnot manifolds in higher degree of nilpotency? The problem relates to characterizing the structure of the space of hypoelliptic sub-Laplacian type operators, and results going back to Rothschild-Stein and Helffer-Nourrigat. In two degrees of nilpotency, there is a rich index theory by work of van Erp-Baum on contact manifolds, that was later extended to polycontact manifolds by Goffeng-Kuzmin. We provide a plethora of examples in higher degree of nilpotency where the index theory is trivial. 
\end{abstract}

\section{Introduction}
We consider a Carnot manifold $\mathfrak{X}$ with degree of nilpotency $r$ coming from an equiregular differential system. 
For further geometric context of Carnot manifolds defined from equiregular differential system, see \cite{gromovcc,mithcjdg}, and for further analytic context see \cite{AMY,deverd,metivier}. 
Carnot manifolds are also known as filtered manifolds or equiregular sub-Riemannian manifolds.
In other words, we have chosen a subbundle $H\subseteq T\mathfrak{X}$ that induces a filtering 
\begin{equation}
\label{filteredeq}
T\mathfrak{X}=T^{-r}\mathfrak{X}\supsetneq T^{-r+1}\mathfrak{X}\supsetneq \ldots\supsetneq T^{-2}\mathfrak{X}\supsetneq T^{-1}\mathfrak{X}\supsetneq 0,
\end{equation}
of sub-bundles such that $T^{-1}\mathfrak{X}=H$ and $T^{j-1}\mathfrak{X}=T^j\mathfrak{X}+[T^{-1}X,T^j\mathfrak{X}]$ for any $j$. The associated graded bundle $\mathfrak{t}_H\mathfrak{X}:=\oplus_j T^{-j}\mathfrak{X}/T^{-j+1}\mathfrak{X}$ carries a fibrewise Lie algebra structure induced from commutators of vector fields. While $T^{-1}\mathfrak{X}=H$ is bracket generating, for any $x\in \mathfrak{X}$, the nilpotent Lie algebra $\mathfrak{t}_H\mathfrak{X}_x$ is generated by $T^{-1}_x\mathfrak{X}$ and has nilpotency degree $r$. We write $\mathcal{DO}_H^m(\mathfrak{X};E)$ for the space of differential operators acting on a vector bundle $E\to \mathfrak{X}$ of Heisenberg order at most $m$ relative to the filtering. 

Among the differential operators in $\mathcal{DO}_H^m(\mathfrak{X};E)$, the $H$-elliptic operators form a distinguished class that define Fredholm operators on an appropriate scale of Heisenberg-Sobolev spaces. We recall the notion of $H$-ellipticity and the closely related Rockland condition two paragraphs below. By abstract deformation arguments \cite{baumvanerp,mohsen2} one finds as rich index theory for $H$-elliptic operators in the Heisenberg calculus as one does in the ordinary pseudodifferential calculus -- yet no concrete examples with non-trivial index theory are known in general. A natural quest is to look for classes of differential operators that are $H$-elliptic and carry non-trivial index theory. A further reaching goal is to find differential operators capturing the geometry of Carnot manifolds in the spirit that Dirac operators do for Riemannian geometry and classical elliptic operators \cite{AS,BD,LM}.

In light of recent results \cite{baumvanerp,goffkuz}, we approach this line of questioning by studying operators $D_\gamma\in \mathcal{DO}_H^2(\mathfrak{X};E)$ of the form 
\begin{equation}
\label{jnkjnkjnad}
D_\gamma=\sum_{j=1}^{n} X_j^2+\sum_{l=1}^{m}\gamma_l Y_l+\mathcal{DO}_H^1(\mathfrak{X};E),
\end{equation}
where $X_1,\ldots ,X_{n}$ span $T^{-1}\mathfrak{X}$ in all points and $Y_1,\ldots, Y_{m}$ span $T^{-2}\mathfrak{X}/T^{-1}\mathfrak{X}$ in all points. We call such an operator a \emph{sub-Laplacian} and note that $D_\gamma$ is uniquely determined modulo $\mathcal{DO}_H^1(\mathfrak{X};E)$ from the classical principal symbol $\sigma^2(\sum_{j=1}^{n} X_j^2)$ and the restricted sub-principal symbol
\begin{equation}
\label{lknlnblabla}
\gamma:=\sigma^1\left(D_\gamma-\sum_{j=1}^{n} X_j^2\right)\bigg|_{(T^{-1}\mathfrak{X})^\perp/(T^{-2}\mathfrak{X})^\perp}.
\end{equation}
The possible classical principal symbols $\sigma^2(\sum_{j=1}^{n} X_j^2)$ stand in a one-to-one correspondence with metrics on the sub-bundle $H=T^{-1}\mathfrak{X}\subseteq T\mathfrak{X}$. 

Let us briefly review the Heisenberg calculus \cite{davehaller,goffkuz,melinoldpreprint,vanerpyuncken}, see also \cite{bealsgreiner,pongemono,taylor} for the case of contact manifolds or more generally when $T\mathfrak{X}/H$  has rank $1$. Of particular importance is the Rockland condition and related notions of $H$-ellipticity that are later used to conclude analytic properties of sub-Laplacians. We write $\mathfrak{t}_H\mathfrak{X}:=\oplus_j T^j\mathfrak{X}/T^{j+1}\mathfrak{X}$ for the graded bundle associated with the filtration \eqref{filteredeq} and view $\mathfrak{t}_H\mathfrak{X}$ as a bundle of graded Lie algebras with fibrewise bracket induced from the Lie bracket of vector fields. We can fibrewise integrate the Lie algebra structure to a bundle $T_H\mathfrak{X}\to \mathfrak{X}$ of simply connected, nilpotent Lie groups. By declaring a vector field with values in $T^{-j}\mathfrak{X}$ to have order $\leq j$, we can for a natural number $m$ define the space of differential operators $\mathcal{DO}_H^m(\mathfrak{X};E)$ of Heisenberg order $m$ on a vector bundle $E\to \mathfrak{X}$. By taking the principal part with respect to the Heisenberg order, we arrive at a symbol mapping 
$$\sigma^m_H:\mathcal{DO}_H^m(\mathfrak{X};E)/\mathcal{DO}_H^{m-1}(\mathfrak{X};E)\xrightarrow{\sim} C^\infty(\mathfrak{X},\mathcal{U}_m(\mathfrak{t}_H\mathfrak{X})),$$
where $\mathcal{U}_m$ denotes the degree $m$ part of the universal enveloping Lie algebra. A key feature is that the Lie algebraic machinery is compatible with the operators in the sense that $\sigma^{m_1+m_2}_H(D_1D_2)=\sigma^{m_1}_H(D_1)\sigma^{m_2}_H(D_2)$ when $D_1$ and $D_2$ are of order $m_1$ and $m_2$, respectively. An important notion for a differential operator $D\in \mathcal{DO}_H^m(\mathfrak{X};E)$ is the Rockland condition, which is said to be satisfied if for any $x\in \mathfrak{X}$ and any non-trivial, irreducible, unitary representation $\pi$ of the simply connected Lie group $T_H\mathfrak{X}_x$ integrating $\mathfrak{t}_H\mathfrak{X}_x$, the represented symbol $\pi(\sigma_H^m(D)_x)$ is injective on the space of smooth vectors $C^\infty(\pi)$. For the operators $D_\gamma$ of interest in this paper, the Rockland condition was proven to be equivalent to maximal hypoellipticity in \cite{rothcrit},  with more general results found later \cite{AMY,davehaller}. For the equiregular differential systems we consider, we can define $H$-ellipticity of a differential operator $D\in \mathcal{DO}_H^m(\mathfrak{X};E)$ as $D$ admitting an inverse modulo lower order terms in the Heisenberg calculus, or equivalently that $\sigma^m_H(D)$ admits an inverse in the Heisenberg symbol algebra \cite{davehaller,goffkuz}, that by \cite{davehaller} is equivalent to $D$ and $D^*$ satisfying the Rockland condition. The problem we will study is the following. 

\begin{problem*}
\label{mainprob}
For a fixed metric on $H=T^{-1}\mathfrak{X}$, which first order polynomial sections 
$$\gamma:(T^{-1}\mathfrak{X})^\perp/(T^{-2}\mathfrak{X})^\perp\to \End(E),$$
give rise to an $H$-elliptic operator $D_\gamma$ (as in \eqref{jnkjnkjnad} and \eqref{lknlnblabla}) with non-trivial Fredholm index? 
\end{problem*}

 Problem \ref{mainprob} is completely understood in the case of contact manifolds in work by Baum-van Erp \cite{baumvanerp} that followed a substantial effort from Epstein-Melrose \cite{melroseeptein}. Similar results hold for polycontact manifolds \cite{goffkuz}. Contact- and polycontact manifolds are Carnot manifolds of nilpotency degree $r=2$ with a strong non-degeneracy condition on the Levi bracket $H\times H\to T\mathfrak{X}/H$. For Carnot manifolds of higher nilpotency degree close to nothing is known about non-triviality of the index. The second part of Problem \ref{mainprob}, that of computing the index, is by the methods of \cite{baumvanerp,goffkuz} closely connected to the first part of Problem \ref{mainprob}, that of characterizing $H$-ellipticity. Since $H$-ellipticity is equivalent to the Rockland condition, the problem of characterizing $H$-ellipticity reduces to pointwise properties. More precisely, given a graded nilpotent Lie algebra $\mathfrak{g}=\mathfrak{g}_{-1}\oplus \mathfrak{g}_{-2}\oplus\cdots\oplus \mathfrak{g}_{-r}$, generated by $\mathfrak{g}_{-1}$, and $N\in \N_{>0}$ we consider the set 
$$\mathcal{E}(\mathfrak{g},N):=\{\gamma=(\gamma_l)_{l=1}^m\in M_N(\C)^m: \mbox{the operator \eqref{jnkjnkjnad} is hypoelliptic on $G$}\}.$$
Here $m=\dim(\mathfrak{g}_{-2})$ and we have implicitly chosen an inner product structure on $\mathfrak{g}_{-1}$ that fixes the principal symbol $\sigma^2(\sum_{j=1}^{n} X_j^2)$ and $(Y_l)_{l=1}^m$ in \eqref{jnkjnkjnad} as an orthonormal basis of $\mathfrak{g}_{-2}$. Also, $N$ encodes the rank of the vector bundle $E$. \emph{The reader should beware that we can, and will, identify $\mathcal{E}(\mathfrak{g},N)$ with a subset of $\mathfrak{g}_{-2}\otimes M_N(\C)$ as well as with a subset of linear polynomials $\mathfrak{g}_{-2}^*\to M_N(\C)$.  To ensure compatibility with \eqref{lknlnblabla}, this identification is made by defining 
$$\gamma(\xi):=i\xi(\gamma), \quad \xi\in \mathfrak{g}_{-2}^*,$$
where $\xi(\gamma)\in M_N(\C)$ is defined from contracting $\gamma\in \mathfrak{g}_{-2}\otimes M_N(\C)$ with $\xi\in \mathfrak{g}_{-2}^*$ in the first leg.} By construction it is clear that a first order polynomial section $\gamma:(T^{-1}\mathfrak{X})^\perp/(T^{-2}\mathfrak{X})^\perp\to \End(E)$ on a compact Carnot manifold $\mathfrak{X}$ defines an $H$-elliptic operator $D_\gamma$ (as in \eqref{jnkjnkjnad} and \eqref{lknlnblabla}) if and only if $\gamma_x, -\gamma_x^*\in \mathcal{E}(\mathfrak{t}_H\mathfrak{X}_x,\mathrm{rk}(E_x))$ for all $x\in \mathfrak{X}$. For contact, or more generally polycontact, manifolds $\mathcal{E}(\mathfrak{g},N)$ consists precisely of those $\gamma \in \mathfrak{g}_{-2}\otimes M_N(\C)$ such that $\gamma(\xi)$ has spectrum outside $2\N+\mathrm{rk}(H)/2$ for $\xi\in \mathfrak{g}_{-2}^*$ being unit length in the inner product induced from that on $\mathfrak{g}_{-1}$ and the Lie bracket. 

\begin{definition*}
Let $\mathfrak{g}$ be a graded nilpotent Lie algebra such that $\mathfrak{g}_{-1}$ has an inner product and generates $\mathfrak{g}$. For $N\in \N_{>0}$ we say that {\bf $\mathfrak{g}$ has property $\mathcal{E}^*N$} if for any $t\in (0,1]$ we have that 
$$t\mathcal{E}(\mathfrak{g},N)\subseteq\mathcal{E}(\mathfrak{g},N).$$
\end{definition*}

The importance of property $\mathcal{E}^*N$ is found in Theorem \ref{indexzerofromegn} showing that if $\mathfrak{X}$ is a compact Carnot manifold with $\mathfrak{t}_H\mathfrak{X}_x$ having property $\mathcal{E}^*N$ for all $x$, then Problem \ref{mainprob} admits no solution $\gamma$ whenever $E$ has rank $N$.

Throughout the paper we prove property $\mathcal{E}^*N$ in various cases. In Corollary \ref{cortors}, we carefully apply Rothschild-Stein's work \cite{rothstein} to show that if $\mathfrak{g}_{-2}$ has dimension different from $2$ then property $\mathcal{E}^*1$ holds. In Proposition \ref{alknalkdnad} we modify a work of Helffer \cite{helffercomp} to show that if $\mathfrak{g}$ has nilpotency degree $>2$ and $\mathfrak{g}_{-2}$ is one-dimensional then property $\mathcal{E}^*N$ holds for all $N$. In Proposition \ref{alakdnjak} we show that for the nilpotent Lie algebra $\mathfrak{n}(4)$ of strictly upper triangular $4\times 4$ matrices, property $\mathcal{E}^*N$ holds for all $N$. Invoking Theorem \ref{indexzerofromegn} as discussed above we can deduce the next theorem.

\begin{thm*}
\label{mainthm}
Let $\mathfrak{X}$ denote a compact Carnot manifold defined from an equiregular differential system as above, and write $r$ for its nilpotency degree. The operators $D_\gamma\in \mathcal{DO}_H^2(\mathfrak{X};E)$ satisfy that if $D_\gamma$ is $H$-elliptic then
$$\mathrm{index}(D_\gamma)=0,$$
under any of the following geometric assumptions on $(\mathfrak{X},E)$:
\begin{enumerate}
\item $E$ is a line bundle and for all $x\in \mathfrak{X}$, $\mathrm{rank}(T^{-2}\mathfrak{X}/T^{-1}\mathfrak{X})\neq 2$ .
\item The nilpotency degree of $\mathfrak{X}$ satisfies $r>2$ and $T^{-2}\mathfrak{X}/T^{-1}\mathfrak{X}$ is a line bundle.
\item $\mathfrak{X}$ is a regular parabolic geometry of type $(SL_4(\R),P)$, where $P$ is the minimal parabolic subgroup.
\end{enumerate}
In fact, in all cases above it even holds that $[D_\gamma]=0$ in $K_0(\mathfrak{X})$ whenever $D_\gamma$ is $H$-elliptic.
\end{thm*}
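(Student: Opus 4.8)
The plan is to reduce the global statement to the pointwise algebraic property $\mathcal{E}^*N$ and then feed it into the case-by-case results proved earlier in the paper. Recall from Theorem \ref{indexzerofromegn} that if $\mathfrak{X}$ is a compact Carnot manifold with $\mathfrak{t}_H\mathfrak{X}_x$ having property $\mathcal{E}^*N$ for all $x$, where $N=\mathrm{rank}(E)$, then every $H$-elliptic $D_\gamma$ of the form \eqref{jnkjnkjnad}--\eqref{lknlnblabla} has $[D_\gamma]=0$ in $K_0(\mathfrak{X})$, hence in particular $\mathrm{index}(D_\gamma)=0$. So I would first observe that it suffices, in each of the three geometric situations, to verify that the osculating graded nilpotent Lie algebra $\mathfrak{g}_x:=\mathfrak{t}_H\mathfrak{X}_x$ --- equipped with the inner product on $\mathfrak{g}_{-1,x}=H_x$ induced by the fixed metric, which is exactly the data pinning down the principal symbol $\sigma^2(\sum_j X_j^2)$ --- has property $\mathcal{E}^*N$ for every $x$. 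Here I would use that an equiregular differential system makes $\mathfrak{t}_H\mathfrak{X}$ and each graded piece $T^{-j}\mathfrak{X}/T^{-j+1}\mathfrak{X}$ into honest vector bundles of constant rank, so that the discrete invariants entering the hypotheses --- the nilpotency degree $r$ and $\dim\mathfrak{g}_{-2,x}=\mathrm{rank}(T^{-2}\mathfrak{X}/T^{-1}\mathfrak{X})$ --- do not depend on $x$.

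Next I would dispatch the three cases. In case (1), $N=1$ and by hypothesis $\dim\mathfrak{g}_{-2,x}\neq 2$ for all $x$, so Corollary \ref{cortors} gives property $\mathcal{E}^*1$ for each $\mathfrak{g}_x$. In case (2), the nilpotency degree is $r>2$ and $\mathfrak{g}_{-2,x}$ is one-dimensional, so Proposition \ref{alknalkdnad} gives property $\mathcal{E}^*N$ for every $N$, in particular for $N=\mathrm{rank}(E)$. In case (3), I would first identify the pointwise model: for a regular parabolic geometry of type $(SL_4(\mathbb{R}),P)$ with $P$ the minimal parabolic, the graded bundle $\mathfrak{t}_H\mathfrak{X}$ is fibrewise isomorphic, as a graded Lie algebra, to the negative part $\mathfrak{g}_-=\mathfrak{g}_{-1}\oplus\mathfrak{g}_{-2}\oplus\mathfrak{g}_{-3}$ of the $|3|$-grading of $\mathfrak{sl}_4(\mathbb{R})$ determined by $P$, which is precisely the Lie algebra $\mathfrak{n}(4)$ of strictly upper triangular $4\times 4$ matrices graded by distance to the diagonal (so $r=3$ and $\dim\mathfrak{g}_{-2}=2$, which is exactly why neither (1) nor (2) applies). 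Proposition \ref{alakdnjak} then gives property $\mathcal{E}^*N$ for all $N$. In each case the hypotheses of Theorem \ref{indexzerofromegn} are met, which yields $\mathrm{index}(D_\gamma)=0$ together with the $K$-theoretic refinement $[D_\gamma]=0$ in $K_0(\mathfrak{X})$.

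I expect the only genuinely delicate point to be the structural identification in case (3): one must invoke the standard structure theory of parabolic geometries (e.g.\ \v{C}ap--Slov\'ak) to see that \emph{regularity} of the geometry forces the osculating Lie algebra at every point to be the fixed nilpotent model $\mathfrak{n}(4)$, and not merely some graded nilpotent algebra with the same graded dimensions; without regularity the induced bracket $\mathfrak{g}_{-1}\times\mathfrak{g}_{-1}\to\mathfrak{g}_{-2}$ could degenerate and Proposition \ref{alakdnjak} would not apply verbatim. The remaining points are routine: checking that the equivalence recorded before Problem \ref{mainprob} (namely, $D_\gamma$ is $H$-elliptic iff $\gamma_x,-\gamma_x^*\in\mathcal{E}(\mathfrak{t}_H\mathfrak{X}_x,\mathrm{rank}(E_x))$ for all $x$) is exactly the input format expected by Theorem \ref{indexzerofromegn}, and noting that property $\mathcal{E}^*N$ is by definition the scaling stability $t\mathcal{E}(\mathfrak{g},N)\subseteq\mathcal{E}(\mathfrak{g},N)$ used by the deformation argument there. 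With these in hand the theorem follows by assembling the three cases.
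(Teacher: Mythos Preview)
Your proposal is correct and matches the paper's own argument essentially verbatim: the paper deduces the theorem by invoking Theorem \ref{indexzerofromegn} once property $\mathcal{E}^*N$ has been established pointwise, using Corollary \ref{cortors} for case (1), Proposition \ref{alknalkdnad} for case (2), and Proposition \ref{alakdnjak} together with the identification $\mathfrak{t}_H\mathfrak{X}_x\cong\mathfrak{n}(4)$ for case (3). Your remark that regularity of the parabolic geometry is what pins down the osculating algebra as $\mathfrak{n}(4)$ is exactly the structural input the paper relies on implicitly.
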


Here $K_0(\mathfrak{X})$ denote the even $K$-homology of the topological space $\mathfrak{X}$, for details see \cite{BD,HigRoe}.

\begin{remark}
Instead of an equiregular differential system, we can more generally consider a finitely generated submodule $\mathpzc{E}\subseteq C^\infty(\mathfrak{X},T\mathfrak{X})$ that satisfies the Hörmander condition, i.e. $\mathpzc{E}$ generates $C^\infty(\mathfrak{X},T\mathfrak{X})$ as a Lie algebra. We have hope that the ideas underlying Theorem \ref{mainthm} extend also to this setting following \cite{AMY}.  A related example can be found in \cite{Mohsen3}. Indeed, when the system is not equiregular but satisfies the H\"ormander condition of degree $r$, the Rockland condition that in the equiregular case is posed for all $x\in \mathfrak{X}$ and all non-trivial, irreducible, unitary representations of $T_H\mathfrak{X}_x$ can be replaced by a condition that over $x\in \mathfrak{X}$ is posed for all non-trivial representations from a conical closed set $\Gamma_x$ in $\widehat G_{p,r}$ where $G_{p,r}$ is the free group whose Lie algebra $\mathfrak{g}_{r,p}$ is the maximal graded algebra of nilpotency degree $r$ with $p$ generators of degree $-1$. This was conjectured by Helffer-Nourrigat in 1979 (see \cite{HNo}) and recently proved in \cite{AMY}.
\end{remark}

Theorem \ref{mainthm} shows that it is commonly occurring for Problem \ref{mainprob} to admit no solution, however it is unsatisfactory in the sense that all conditions rely on some structure being low dimensional. In the hope of providing a stronger result and understanding for which Carnot manifolds of higher nilpotency degree that Problem \ref{mainprob} admits a solution, we look to the work of Rothschild-Stein \cite{rothstein}. We write $\|\cdot \|_{W}$ for the norm on $\mathfrak{g}_{-2}\otimes M_N(\R)$ defined from realizing $\mathfrak{g}_{-2}\otimes M_N(\R)$ as a quotient of $\mathfrak{so}_n(M_N(\R))$, for $n=\dim(\mathfrak{g}_{-1})$. More details can be found in Subsection \ref{somelei}. This norm was studied in the literature \cite{helffercomp,rothstein} for $N=1$. We can write $\gamma\in \mathfrak{g}_{-2}\otimes M_N(\C)$ as $\gamma=a+ib$ for $a$ and $b$ self-adjoint, and we say that $\gamma$ has {\bf property $\alpha$} if for  any unit vectors $\xi\in \mathfrak{g}_{-2}^*$ and $v\in \ker(\xi(a))$ we have that $|\langle v, \xi(b)v\rangle_{\C^N}|<1$. Here $\xi(\gamma)\in M_N(\C)$ is defined from $\xi\in \mathfrak{g}_{-2}^*$ with $\gamma\in \mathfrak{g}_{-2}\otimes M_N(\C)$ in the first leg,  so $\xi(\gamma)=-i\gamma(\xi)$. Following the ideas in \cite{rothstein}, we prove the following result in Subsection \ref{extrs} that sandwiches $\mathcal{E}(\mathfrak{g},N)$ between two sets that are starshaped with respect to $0$ in the spirit of property $\mathcal{E}^*N$.

\begin{thm*}
Let $\mathfrak{g}$ be a graded nilpotent Lie algebra generated by $\mathfrak{g}_{-1}$. It holds that
$$\mathcal{E}(\mathfrak{g},N)\supseteq \{\gamma\in M_N(\C)^m: \|\Im(\gamma)\|_{W/\R\Re(\gamma))}<1\}.$$
Moreover, if $\mathfrak{g}$ has the property that the Kirillov form $\omega_\xi(X,Y):=\xi([X,Y])$ is degenerate on $\mathfrak{g}_{-1}$ for any $\xi\in \mathfrak{g}_{-2}^*$, then 
$$\mathcal{E}(\mathfrak{g},N)\subseteq \{\gamma\in M_N(\C)^m: \; \gamma\; \mbox{has property $\alpha$}\}.$$
\end{thm*}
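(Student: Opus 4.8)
The plan is to recast both inclusions through the Rockland condition for $D_\gamma$ on $G$ and then through a spectral analysis of the represented operators $\pi(D_\gamma)$. By \cite{rothcrit} (see also \cite{AMY,davehaller}), an operator of the form \eqref{jnkjnkjnad} lies in $\mathcal{E}(\mathfrak{g},N)$ exactly when $\pi(D_\gamma)$ is injective on smooth vectors for every nontrivial irreducible unitary representation $\pi$ of $G$. Using Kirillov theory and the Heisenberg dilations I would organise these $\pi$'s by the restriction $\xi:=\ell|_{\mathfrak{g}_{-2}}$ of the defining functional $\ell\in\mathfrak{g}^*$: homogeneity reduces the problem to $\xi$ on the unit sphere of $\mathfrak{g}_{-2}^*$ together with the stratum $\xi=0$, which is disposed of first, since there $\pi(\sum_j X_j^2)$ is strictly negative after rescaling while the $\mathfrak{g}_{-2}$-term $\pi(\sum_l\gamma_l Y_l)$ is subordinate. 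Over a representation above a unit $\xi$, after choosing a polarisation $\pi(\sum_j X_j^2)$ is a positive multiple of a matrix-valued harmonic oscillator whose spectrum is bounded above by $-\mu_0(\xi)$, with $\mu_0(\xi)$ the ground-state energy built from the symplectic eigenvalues of $\omega_\xi(X,Y):=\xi([X,Y])$ on $\mathfrak{g}_{-1}$, and $\pi(\sum_l\gamma_l Y_l)$ is to leading order multiplication by $i\xi(\gamma)=i\xi(a)-\xi(b)$. I take the normalisations of ``unit $\xi$'' and of $\|\cdot\|_W$ from the literature \cite{rothstein,helffercomp}, so that the threshold $1$ is the one calibrated to this oscillator data.

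For the inclusion $\supseteq$, assume $\|\Im(\gamma)\|_{W/\R\Re(\gamma)}<1$ and pick $c\in\R$ with $\|\Im(\gamma)-c\,\Re(\gamma)\|_W<1$. Since $\pi((1-ic)D_\gamma)=(1-ic)\pi(D_\gamma)$ has the same kernel as $\pi(D_\gamma)$, it suffices to show $\Re\langle(1-ic)\pi(D_\gamma)u,u\rangle<0$ for every nonzero smooth $u$; by a computation of adjoints, $(1-ic)D_\gamma$ has self-adjoint part $\sum_j X_j^2+i\sum_l\big(\Im(\gamma_l)-c\,\Re(\gamma_l)\big)Y_l$ modulo $\mathcal{DO}_H^1(\mathfrak{X};E)$. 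A completion of squares adapted to a skew-symmetric lift $\Omega$ of $\Im(\gamma)-c\,\Re(\gamma)$ under the bracket map $\mathfrak{so}_n\to\mathfrak{g}_{-2}$ — whose operator norm is $<1$ by the definition of $\|\cdot\|_W$ — exhibits this self-adjoint part, modulo Heisenberg-order-one terms, as a negative operator in every representation; sharpness of the bound $\|\Omega\|<1$ is visible already in the two-step representations, where the estimate is exactly $\|\xi(\Im\gamma-c\Re\gamma)\|<\mu_0(\xi)$. Absorbing the order-one remainder gives the strict inequality, hence injectivity and the Rockland condition, so $\gamma\in\mathcal{E}(\mathfrak{g},N)$. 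The work is to make the estimate uniform over $\pi$ and to control the lower-order terms, for which I would follow the a priori estimates of \cite{rothstein}.

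For the inclusion $\subseteq$ under the degeneracy hypothesis I argue by contraposition. Suppose $\gamma$ fails property $\alpha$: there are unit vectors $\xi\in\mathfrak{g}_{-2}^*$ and $v\in\ker(\xi(a))$ with $|\langle v,\xi(b)v\rangle|\ge1$, and, replacing $\xi$ by $-\xi$, I may assume $\langle v,\xi(b)v\rangle\le-1$. The goal is to produce a nontrivial irreducible $\pi$ with a nonzero smooth vector in $\ker(\pi(D_\gamma))$. This is where the hypothesis that $\omega_\xi$ is degenerate on $\mathfrak{g}_{-1}$ enters: fix $0\ne X_0$ in the radical $R_\xi$ of $\omega_\xi$. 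In any polarised (``two-step'') representation above $\xi$ the element $X_0$ commutes modulo $\mathfrak{g}_{\le-3}$ with all of $\mathfrak{g}_{-1}$ and hence acts by a scalar; but letting $\mathfrak{g}_{\le-3}$ act nontrivially — using a bracket $\mathrm{ad}(X_0)\colon\mathfrak{g}_{-2}\to\mathfrak{g}_{-3}$ to couple $X_0$ with a direction of $\mathfrak{g}_{-2}$ — one builds a one-parameter family of irreducible representations in which $X_0$ is a genuine free variable. In such a representation the $v$-isotypical part of $\pi(D_\gamma)$ reduces to a scalar model operator in the $X_0$-variable (the $\mathfrak{g}_{-1}/R_\xi$-oscillator superposed with a complex-Airy-type operator) carrying the additive shift $\langle v,\xi(b)v\rangle$. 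The free variable converts the rigid ``spectrum of $\xi(\gamma)$ must avoid the oscillator spectrum'' obstruction of the contact case into a numerical-range obstruction, and tuning the parameters of the family produces a smooth kernel vector (or an exact Weyl sequence) precisely when $\langle v,\xi(b)v\rangle\le-1$. Hence $D_\gamma$ is not hypoelliptic and $\gamma\notin\mathcal{E}(\mathfrak{g},N)$.

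I expect this last step to be the main obstacle: singling out the correct representation, extracting the matrix-valued model operator, and carrying through its spectral analysis so that the threshold comes out to exactly $1$. The degeneracy of $\omega_\xi$ is genuinely needed — in the contact or polycontact case the obstruction is a discrete spectral condition, so $\mathcal{E}(\mathfrak{g},N)$ contains elements violating property $\alpha$ (already scalar $\gamma=i\mu Y_1$ with $1\le|\mu|<\mathrm{rk}(H)/2$ and $\mu$ outside $2\N+\mathrm{rk}(H)/2$), whereas property $\alpha$ is a numerical-range bound; it is precisely the radical of the Kirillov form that supplies the extra representations converting the discrete condition into the continuous one.
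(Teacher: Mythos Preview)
Your approach to the first inclusion $\supseteq$ is essentially the paper's. The paper works directly on $L^2(G)$ and proves the a~priori estimate $\sum_j\|X_jf\|^2\leq A|\langle f,D_\gamma f\rangle|$ by choosing $t\in\R$ with $\|b+ta\|_W=\|b\|_{W/\R a}$ (your $c$), using $\Im\langle f,D_\gamma f\rangle=\sum_l\langle f,a_lY_lf\rangle$, and bounding $|\sum_l\langle f,(b_l+ta_l)Y_lf\rangle|\leq\|b+ta\|_W\sum_j\|X_jf\|^2$ via the lift $\delta$ and trace duality. Your ``completion of squares'' along a skew-symmetric lift $\Omega$ and per-representation negativity argument encode exactly the same estimate; the packaging differs but not the content.

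For the inclusion $\subseteq$, your strategy again matches the paper's: pick $X_0$ in the radical of $\omega_\xi$ and use Rothschild--Stein's representations above $\xi$, in which $X_0$ contributes a continuous parameter so that the spectrum of $\pi(\sum_jX_j^2)$ fills $(-\infty,-1]$. (Your detour through $\mathrm{ad}(X_0):\mathfrak{g}_{-2}\to\mathfrak{g}_{-3}$ and a ``complex-Airy'' model is not needed; the construction lives on the two-step truncation, where $\pi(Y_l)=i\xi_l$ is scalar.)

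There is, however, a genuine gap shared by your sketch and the paper's expanded argument. Since $\pi(Y_l)=i\xi_l$, one has $\pi(D_\gamma)=\pi(\sum_jX_j^2)\otimes 1+1\otimes\gamma(\xi)$ on $\mathcal{H}_\pi\otimes\C^N$, and a kernel vector exists precisely when $\gamma(\xi)=i\xi(a)-\xi(b)$ has an \emph{eigenvalue} in $H(1)$. Failure of property~$\alpha$ only yields a unit $v\in\ker(\xi(a))$ with $|\langle v,\xi(b)v\rangle|\geq 1$, a numerical-range bound; it does not make $v$ an eigenvector of $\xi(b)$, nor force a real eigenvalue of $\gamma(\xi)$. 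Your phrase ``the $v$-isotypical part of $\pi(D_\gamma)$ reduces to a scalar model operator \ldots\ carrying the additive shift $\langle v,\xi(b)v\rangle$'' is exactly where this breaks: there is no $v$-isotypical decoupling unless $\gamma(\xi)$ preserves $\C v$. Concretely, take $\mathfrak{g}=\mathfrak{h}_3\oplus\R[-1]$ (so $m=1$ and every $\omega_\xi$ is degenerate), $N=2$, and $\gamma=a+ib$ with
\[
a=\begin{pmatrix}0&0\\0&1\end{pmatrix},\qquad b=\begin{pmatrix}2&10\\10&0\end{pmatrix}.
\]
Then $e_1\in\ker(a)$ with $\langle e_1,be_1\rangle=2$, so property~$\alpha$ fails; yet the eigenvalues of $\gamma(\pm1)=\pm(ia-b)$ are non-real, hence $\mathrm{Spec}(\gamma(\xi))\cap H(1)=\emptyset$ for $|\xi|=1$, and by Proposition~\ref{gener} one has $\gamma\in\mathcal{E}(\mathfrak{g},2)$. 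So the step from ``not property~$\alpha$'' to ``$D_\gamma$ not hypoelliptic'' cannot be completed as written; the argument only delivers the spectral inclusion of Theorem~\ref{spechighN}.
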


The paper is organized as follows. In Section \ref{secadkljnakjlan} we provide some preliminary material and a proof that property $\mathcal{E}^*N$ implies vanishing of the index of sub-Laplacians. We study instances when property $\mathcal{E}^*N$ holds in Section \ref{seconestan}. We study several examples showcasing that Problem \ref{mainprob} often has no solutions in Section \ref{ljnljnad}. Finally, in Section \ref{fillingap} we provide the final details in the proof of Rothschild-Stein's result \cite[Theorem 2]{rothstein} covering a case that was overlooked.

\subsection*{Acknowledgements} 

The authors wish to thank the anonymous referees for their helpful suggestions. The first listed author was supported by the Swedish Research Council Grant VR 2018-0350.

\section{Preliminaries}
\label{secadkljnakjlan}

\subsection{Some Lie algebra}
\label{somelei}
We consider graded nilpotent Lie algebras 
$$\mathfrak{g}=\bigoplus_{j=1}^r \mathfrak{g}_{-j},$$
such that $\mathfrak{g}_{-1}$ generates $\mathfrak{g}$ as a Lie algebra. Such Lie algebras are also known as stratified Lie algebras. In particular, $r$ coincides with the nilpotency degree of $\mathfrak{g}$  and $\mathfrak{g}_{-j}=\sum_{l=1}^{j-1}[\mathfrak{g}_{-l},\mathfrak{g}_{-j+l}]$ for any $j>1$.

We will study the $H$-ellipticity of the operators $D_\gamma$ on a Carnot manifolds by means of their Heisenberg principal symbol, which in other words mean that we fix a point $x\in \mathfrak{X}$ and consider the translation invariant differential operator on the simply connected nilpotent Lie group $G$ integrating $\mathfrak{g}:=\mathfrak{t}_H\mathfrak{X}_x$ that takes the form
$$D_\gamma=\sum_{j=1}^{n} X_j^2+\sum_{l=1}^{m}\gamma_{l} Y_l,$$
where $X_1,\ldots, X_n$ is the basis of a subspace $\mathfrak{g}_{-1}\subseteq \mathfrak{g}$ generating the nilpotent Lie algebra $\mathfrak{g}$ and $Y_1,\ldots, Y_m$ is a basis for the subspace $\mathfrak{g}_{-2}\subseteq \mathfrak{g}$. We tacitly assume that $\mathfrak{g}_{-1}$ is equipped with an inner product in which $X_1,\ldots, X_n$ is an orthonormal basis and that $Y_1,\ldots, Y_m$ is an orthonormal basis for $\mathfrak{g}_{-2}$ is equipped with the inner product induced from the projection mapping $\wedge^2\mathfrak{g}_{-1}\to \mathfrak{g}_{-2}$ that the Lie bracket defines. Here $(\gamma_l)_{l=1}^m\subseteq M_N(\C)$ is some collection of matrices. 

We remark that alternatively, we can also describe our operator in the form 
\begin{equation}
\label{blabla}
D_\gamma=\sum_{j=1}^{n} X_j^2+\sum_{k,l=1}^{m}\gamma_{k,l} [X_k,X_l],
\end{equation}
where $(\gamma_{k,l})_{k,l=1}^n\subseteq M_N(\C)$ is some collection of matrices. 
Write $\mathfrak{so}_n$ for the algebraic Lie algebra of anti-symmetric $n\times n$-matrices, and $\mathfrak{so}_n(M_N(\C))$ for the anti-symmetric matrices over $M_N(\C)$. That is, $(\gamma_{k,l})_{k,l=1}^n\in \mathfrak{so}_n(M_N(\C))$ if and only if $\gamma_{k,l}=-\gamma_{l,k}$. We can assume that the collection $(\gamma_{k,l})_{k,l=1}^n$ has been choosen in $\mathfrak{so}_n(M_N(\C))$. The collection $(\gamma_{k,l})_{k,l=1}^n\in \mathfrak{so}_n(M_N(\C))$ is uniquely determined from $(\gamma_l)_{l=1}^m$ as above modulo the space 
$$\mathpzc{S}_{\C,N}:=\left\{(s_{k,l})_{k,l=1}^n\in \mathfrak{so}_n(M_N(\C)):\sum_{l,k=1}^{n}s_{l,k} [X_l,X_k]=0\right\}.$$
Note that $\mathpzc{S}_{\C,N}=M_N(\C)\otimes_\C \mathpzc{S}_{\C,1}$. We write $\mathpzc{S}$ for the real part of $\mathpzc{S}_{\C,1}$.

\begin{prop}
\label{knlknada}
There is a linear isomorphism 
$$\delta:M_N(\C)^m\to \mathfrak{so}_n(M_N(\C))/\mathpzc{S}_{\C,N},$$
defined from 
$$\sum_{k,l=1}^{n}\delta(\gamma)_{k,l} [X_k,X_l]=\sum_{l=1}^{m}\gamma_{l} Y_l.$$
The map $\delta$ respects entrywise complex conjugation and hermitean conjugates, where hermitean conjugate in $\mathfrak{so}_n(M_N(\C))$ is entry-wise defined by $((\gamma_{k,l})_{k,l=1}^n)^*:=(\gamma_{k,l}^*)_{k,l=1}^n$. 
\end{prop}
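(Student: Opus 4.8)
The plan is to construct $\delta$ by first exhibiting the surjective linear map $\mathfrak{so}_n(M_N(\C))\to M_N(\C)^m$ sending a collection $(\gamma_{k,l})$ to the tuple $(\gamma_l)$ determined by $\sum_{k,l}\gamma_{k,l}[X_k,X_l]=\sum_l\gamma_l Y_l$, and then to invert it. Concretely: the Lie bracket induces a linear surjection $\Lambda:\wedge^2\mathfrak{g}_{-1}\to\mathfrak{g}_{-2}$, $X_k\wedge X_l\mapsto [X_k,X_l]$; tensoring with $M_N(\C)$ gives $\Lambda\otimes\id:\mathfrak{so}_n(M_N(\C))\to\mathfrak{g}_{-2}\otimes M_N(\C)\cong M_N(\C)^m$ (using the orthonormal basis $Y_1,\ldots,Y_m$ to make the last identification). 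By definition $\mathpzc{S}_{\C,N}=\ker(\Lambda\otimes\id)$, so $\Lambda\otimes\id$ descends to a linear isomorphism $\mathfrak{so}_n(M_N(\C))/\mathpzc{S}_{\C,N}\xrightarrow{\sim}M_N(\C)^m$. First I would take $\delta$ to be the inverse of this map; the displayed defining relation $\sum_{k,l}\delta(\gamma)_{k,l}[X_k,X_l]=\sum_l\gamma_l Y_l$ is then exactly the statement that $\delta(\gamma)$ is any preimage of $(\gamma_l)$ under $\Lambda\otimes\id$, which characterizes $\delta(\gamma)$ uniquely in the quotient. That $\delta$ is linear is immediate since it is the inverse of a linear map.

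Next I would check compatibility with the two involutions. For complex conjugation: the structure constants of $\mathfrak{g}$ in the chosen bases are real (the $Y_l$ form an orthonormal basis coming from the real bracket $\Lambda$), so $\Lambda\otimes\id$ intertwines entrywise conjugation on $\mathfrak{so}_n(M_N(\C))$ with entrywise conjugation on $M_N(\C)^m$; passing to inverses, $\delta(\bar\gamma)=\overline{\delta(\gamma)}$. For the hermitean conjugate I would argue the same way: entrywise $*$ on $M_N(\C)^m$ corresponds under $\Lambda\otimes\id$ to entrywise $*$ on $\mathfrak{so}_n(M_N(\C))$ — note that applying $*$ entrywise preserves antisymmetry since $\gamma_{k,l}^*=(-\gamma_{l,k})^*=-\gamma_{l,k}^*$, so the operation is well-defined on $\mathfrak{so}_n(M_N(\C))$ — and it preserves $\mathpzc{S}_{\C,N}$ because $\mathpzc{S}_{\C,N}=M_N(\C)\otimes_\C\mathpzc{S}_{\C,1}$ with $\mathpzc{S}_{\C,1}$ defined by relations with real (in fact rational) coefficients. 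Hence $*$ descends to the quotient and $\delta$ intertwines it with entrywise $*$ on $M_N(\C)^m$.

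The only genuinely substantive point — and the one I would be most careful about — is the identity $\mathpzc{S}_{\C,N}=M_N(\C)\otimes_\C\mathpzc{S}_{\C,1}$, which underlies both the well-definedness of $\delta$ and its compatibility with $*$. This holds because $\mathpzc{S}_{\C,1}=\ker\Lambda_\C$ where $\Lambda_\C:\wedge^2\mathfrak{g}_{-1}\otimes\C\to\mathfrak{g}_{-2}\otimes\C$ is the complexified bracket, and tensoring a linear map with the (flat, free) module $M_N(\C)$ commutes with taking kernels: $\ker(\Lambda_\C\otimes\id_{M_N(\C)})=\ker(\Lambda_\C)\otimes M_N(\C)$. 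I would state this as a one-line remark rather than belabor it. Everything else is bookkeeping once the identification $M_N(\C)^m\cong\mathfrak{g}_{-2}\otimes M_N(\C)$ via the orthonormal basis $(Y_l)$ is fixed, and the fact that $\delta$ is an isomorphism is forced by the rank–nullity / first-isomorphism argument above.
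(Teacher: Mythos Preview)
Your proposal is correct and follows essentially the same route as the paper: both construct $\delta$ via the bracket-induced surjection $\mathfrak{so}_n(M_N(\C))\to M_N(\C)^m$, identify its kernel as $\mathpzc{S}_{\C,N}$, and invoke the first isomorphism theorem, with compatibility with conjugation and $*$ following because the structure constants are real. Your write-up is considerably more explicit than the paper's (which dispatches the whole thing in three sentences), but there is no substantive difference in strategy.
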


\begin{proof}
Since $(Y_l)_{l=1}^m$ is a basis for $\mathfrak{g}_{-2}$, there exists a matrix $(\gamma_{k,l})_{k,l=1}^n\in \mathfrak{so}_n(M_N(\C))$ with $\sum_{k,l=1}^{m}\gamma_{k,l} [X_k,X_l]=\sum_{l=1}^{m}\gamma_{l} Y_l$ and it is clear that $\delta(\gamma):=[(\gamma_{k,l})_{k,l=1}^n]\in \mathfrak{so}_n(M_N(\C))/\mathpzc{S}_{\C,N}$ is well defined. Again since $(Y_l)_{l=1}^m$ is a basis for $\mathfrak{g}_{-2}$, the map $\delta$ is an isomorphism. It follows from the construction that $\delta$ respects entrywise complex conjugation and hermitean conjugates. 
\end{proof}

\begin{remark}
\label{inclusionrem}
The two step nilpotent Lie algebra $\mathfrak{g}_{-1}\oplus \mathfrak{g}_{-2}\equiv \mathfrak{g}/\oplus_{j=3}^r \mathfrak{g}_{-j}$ is by Eberlein's construction \cite{eberlein} of the following form. We set $V:=\mathfrak{g}_{-1}$ which is an inner product space. We identify $\mathfrak{so}(V)$ with the antisymmetric two-forms. There is an injective map $\iota:\mathfrak{g}_{-2}\to \mathfrak{so}(V)$ defined from $\iota(Y)X:=\mathrm{Ad}^*(X)(Y)$ for $Y\in \mathfrak{g}_{-2}$ and $X\in V=\mathfrak{g}_{-1}$, i.e. the $2$-form $\iota(Y)$ is determined from $\iota(Y)(X,X')=(Y,[X,X'])$. Write $W:=\iota(\mathfrak{g}_{-2})$. For $\omega\in V^*\wedge V^*=\mathfrak{so}(V)$ we write $\omega_W\in W$ for the the orthogonal projection onto $W$. The space $V_W:=V\oplus W$ is a Lie algebra in the Lie bracket $[(X,Y),(X',Y')]:=(0,(X\wedge X')_W)$ and the canonical linear isomorphism $\mathfrak{g}/\oplus_{j=3}^r \mathfrak{g}_{-j}\xrightarrow{\sim}V_W$ is also a Lie algebra isomorphism. It is clear from a dimensional consideration that the composition 
$$\mathfrak{g}_{-2}\xrightarrow{\iota} \mathfrak{so}(V)\to \mathfrak{so}(V)/\mathpzc{S},$$
is a linear isomorphism.

The isomorphism $\delta$ from Proposition \ref{knlknada} can be factorized as 
\begin{align*}
M_N(\C)^m=M_N(\C)\otimes_\R \mathfrak{g}_{-2}&\xrightarrow{\mathrm{id}\otimes \iota} M_N(\C)\otimes_\R\mathfrak{so}(V)\to\\
&\to M_N(\C)\otimes_\R(\mathfrak{so}(V)/\mathpzc{S})=\mathfrak{so}_n(M_N(\C))/\mathpzc{S}_{\C,N}.
\end{align*}
The isomorphism $\delta$ from Proposition \ref{knlknada} is therefore isometric if we view $\mathfrak{g}_{-2}\cong \R^m$ as the matrix normed space induced from the inclusion $\iota$.
\end{remark}

\begin{remark}
Fixing our graded nilpotent Lie algebra $\mathfrak{g}$ as above, and the basis for $\mathfrak{g}_{-1}\oplus \mathfrak{g}_{-2}$, we can with any $\gamma\in M_N(\C)^m$ associate a first order polynomial on $\mathfrak{g}_{-2}^*$ that we by an abuse of notation write 
$$\gamma:\mathfrak{g}_{-2}^*\to M_N(\C), \quad \gamma(\xi):=i\sum_{l=1}^m \gamma_l \xi_l.$$
This identification comes from viewing $\mathfrak{g}_{-2}\otimes M_N(\C)=M_N(\C)^m$ as the subspace of first order homogeneous polynomials on $\mathfrak{g}_{-2}^*$ with values in $M_N(\C)$. Our convention ensures that self-adjoint elements of $\mathfrak{g}_{-2}\otimes M_N(\C)$, with $*$-operation $(\sum_l Y_l\otimes \gamma_l)^*=-\sum_l Y_l\otimes \gamma_l$, correspond to first order homogeneous polynomials taking self-adjoint values. 
\end{remark}

\subsection{A characterizing set for the Rockland condition}

\begin{definition}
Define the subset $\mathcal{E}(\mathfrak{g},N)\subseteq M_N(\C)^m$ to consist of matrices $\gamma=(\gamma_{l})_{l=1}^m$ such that the represented operator $\pi(D_\gamma)$ is injective in any non-trivial irreducible unitary representation $\pi$ of $G$.

More generally, for a closed, conical subset $\Phi\subseteq \widehat{G}$ we write $\mathcal{E}_\Phi(\mathfrak{g},N)\subseteq M_N(\C)^m$ for the subset of matrices $\gamma=(\gamma_{l})_{l=1}^m$ such that the represented operator $\pi(D_\gamma)$ is injective for all $\pi\in \Phi \setminus \{1\} $.
\end{definition}
 
 The extension of the Rockland condition to closed cones has been studied by Nourrigat \cite{nourrigat87} and Hebisch \cite{hebisch98}.

\begin{remark}
Note that $D_\gamma$ is hypoelliptic if and only if $\gamma \in \mathcal{E}(\mathfrak{g},N)$ and $D_\gamma$ is H-elliptic if and only if $\gamma \in \mathcal{E}(\mathfrak{g},N)\cap (-\mathcal{E}(\mathfrak{g},N)^*)$ where $\gamma^*:=(\gamma_{l}^*)_{l=1}^m$ is the total hermitean adjoint. We also note that by homogeneity, whenever $\Phi_0\subseteq \Phi\setminus\{1\}$ is a transversal for the dilation action on $\Phi\setminus\{1\}$ we have that $\mathcal{E}_\Phi(\mathfrak{g},N)$ coincides with the set of all matrices $\gamma=(\gamma_{l})_{l=1}^m$ such that the represented operator $\pi(D_\gamma)$ is injective for all $\pi\in \Phi_0 $.
\end{remark}

\begin{prop}
\label{somsplpklad}
Let $\mathfrak{g}$ be a graded nilpotent Lie algebra such that $\mathfrak{g}_{-1}$ generates $\mathfrak{g}$ and is equipped with an inner product.
\begin{enumerate}
\item If $\Phi\subseteq \Phi'$ then $\mathcal{E}_{\Phi'}(\mathfrak{g},N)\subseteq \mathcal{E}_\Phi(\mathfrak{g},N)$.
\item The subset 
$$\mathcal{E}_\Phi(\mathfrak{g},N)\subseteq M_N(\C)^m,$$ 
is open for any closed, conical subset $\Phi\subseteq \widehat{G}\setminus \{1\}$. 
\item $0\in \mathcal{E}(\mathfrak{g},N)$.
\end{enumerate}
\end{prop}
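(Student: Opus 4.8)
The plan is to verify the three items in turn, each of which is essentially a consequence of the definitions together with standard facts about representations of nilpotent Lie groups.

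\medskip

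For (1), I would argue directly from the definition: if $\gamma \in \mathcal{E}_{\Phi'}(\mathfrak{g},N)$ then $\pi(D_\gamma)$ is injective on smooth vectors for every $\pi \in \Phi' \setminus \{1\}$; since $\Phi \subseteq \Phi'$, in particular $\pi(D_\gamma)$ is injective for every $\pi \in \Phi \setminus \{1\}$, so $\gamma \in \mathcal{E}_\Phi(\mathfrak{g},N)$. Nothing more is needed here.

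\medskip

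For (3), I would observe that $D_0 = \sum_{j=1}^n X_j^2$ is (the negative of) the sub-Laplacian on $G$ with scalar coefficients, i.e.\ it acts as $\mathrm{id}_{\C^N}$ tensored with the group sub-Laplacian. For any nontrivial irreducible unitary $\pi$, the operator $-\pi(\sum_j X_j^2) = \sum_j (-\pi(X_j)^2) = \sum_j \pi(X_j)^*\pi(X_j)$ (the $X_j$ being skew-adjoint in the representation, as elements of the Lie algebra acting unitarily) is a nonnegative self-adjoint operator on $C^\infty(\pi)$, and if $\pi(D_0)v = 0$ for a smooth vector $v$ then $\sum_j \|\pi(X_j)v\|^2 = 0$, forcing $\pi(X_j)v = 0$ for all $j$. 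Since $X_1,\dots,X_n$ generate $\mathfrak{g}$ as a Lie algebra, $\pi(Z)v = 0$ for all $Z \in \mathfrak{g}$, hence $v$ is fixed by $\pi$; irreducibility and nontriviality of $\pi$ then force $v = 0$. (One should be slightly careful to carry out this computation first on a dense subspace of analytic vectors and then pass to smooth vectors, or invoke the known essential self-adjointness / Rockland-type statements already cited, but this is routine.) Thus $0 \in \mathcal{E}(\mathfrak{g},N)$.

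\medskip

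Item (2) is the main obstacle and is where the real content lies. The statement is that $\gamma \mapsto$ ``$\pi(D_\gamma)$ injective for all $\pi \in \Phi \setminus \{1\}$'' is an open condition on $\gamma \in M_N(\C)^m$, uniformly over the (noncompact in general) cone $\Phi$. Here I would use that $H$-ellipticity / the Rockland condition on a fixed group admits an equivalent formulation in terms of the existence of a parametrix in the operator-norm-closed symbol calculus, or equivalently in terms of a uniform a priori estimate $\|u\|_{s} \lesssim \|D_\gamma u\|_{s-2} + \|u\|_{s'}$ on Heisenberg–Sobolev spaces (maximal hypoellipticity), as in the cited works \cite{davehaller,rothcrit,AMY}. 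The key point is that the relevant estimate, once it holds for $\gamma_0$, persists for $\gamma$ close to $\gamma_0$ because $D_\gamma - D_{\gamma_0} = \sum_l (\gamma_l - \gamma_{0,l}) Y_l$ is a lower-order perturbation whose symbol has operator norm controlled by $\|\gamma - \gamma_0\|$; a Neumann-series / perturbation argument then shows the parametrix survives. Concretely, I would restrict attention to a dilation-transversal $\Phi_0$ (legitimate by homogeneity, as noted in the Remark preceding the proposition) and encode the condition via the symbol map $\sigma^2_H(D_\gamma)$ and the Rockland algebra, invoking that the set of invertible elements in a Banach algebra is open. The delicate issue to handle carefully is the uniformity in $\pi$ over the possibly noncompact $\Phi$: this is precisely what the closedness and conicity of $\Phi$ buy us, via the compactness of $\Phi_0$ inside $\widehat{G}$ together with the characterization of the Rockland condition as invertibility in a $C^*$-algebra of symbols whose norm dominates $\sup_\pi \|\pi(\cdot)^{-1}\|$. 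I would assemble these ingredients rather than reprove them, citing \cite{davehaller,goffkuz} for the symbol-algebra characterization and \cite{nourrigat87,hebisch98} for the conical refinement.
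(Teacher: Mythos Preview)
Your argument for items (1) and (2) is essentially the paper's: the paper dismisses (1) as obvious and, for (2), invokes Nourrigat \cite{nourrigat87} to obtain maximal estimates uniform in $\pi\in\Phi\setminus\{1\}$ and then a continuity/perturbation argument---exactly what you assemble, just with more detail on your side. One caveat: your claim that the dilation-transversal $\Phi_0$ is compact in $\widehat G$ is not true in general (already $\Phi=\widehat G\setminus\{1\}$ need not have compact transversal), so that particular sentence does not stand on its own; but you also correctly cite \cite{nourrigat87,hebisch98} for the uniform estimate, and that is what actually carries the argument, so the overall strategy is sound.

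For item (3) you take a genuinely different route. The paper simply cites H\"ormander's sum of squares theorem \cite{horsumsquares} (hypoellipticity of $\sum_j X_j^2$, hence Rockland by the equivalence). You instead verify the Rockland condition directly: $\pi(D_0)v=0$ forces $\pi(X_j)v=0$ for all $j$, and since the $X_j$ generate $\mathfrak g$ this makes $v$ a fixed vector, contradicting nontriviality of $\pi$. Your argument is more elementary and self-contained---it avoids the heavy machinery of H\"ormander's theorem and the Rockland equivalence---while the paper's route is shorter to write but leans on a much deeper result than is actually needed.
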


\begin{proof}
The first item is obvious. The Rockland condition relative to $\Phi$ for a homogeneous element $P$ of the enveloping algebra ensures maximal estimates for the operators $\pi(P)$ with uniform constant with respect to $\pi \in \Phi\setminus\{1\}$, see \cite{nourrigat87}. Now it follows from a continuity argument that $\mathcal{E}_\Phi(\mathfrak{g},N)\subseteq M_N(\C)^m$ is open, proving the second item. We have that $0\in \mathcal{E}(\mathfrak{g},N)$ by H\"ormander's sum of squares theorem \cite{horsumsquares}.
\end{proof}

\begin{prop}
\label{lknjnada}
Suppose that $\phi: \mathfrak{g}\to \mathfrak{g}'$ is a surjective graded Lie algebra homomorphism which induces an isometry $\mathfrak{g}_{-1}/(\ker\phi\cap \mathfrak{g}_{-1})\to \mathfrak{g}_{-1}'$. Set $m:=\dim(\mathfrak{g}_{-2})$ and $m':=\dim(\mathfrak{g}'_{-2})$. Then there is a mapping 
$$\phi_*:M_N(\C)^m\to M_N(\C)^{m'},$$
defined from 
$$\mathcal{U}(\phi)(D_\gamma)=D_{\phi_*(\gamma)},$$
where $\mathcal{U}(\phi):\mathcal{U}(\mathfrak{g})\to \mathcal{U}(\mathfrak{g}')$ is the functorial map between universal enveloping algebras. 

Moreover $\phi_*:\mathcal{E}(\mathfrak{g},N)\to \mathcal{E}(\mathfrak{g}',N)$ is a well defined mapping such that $\phi_*(\gamma_1)=\phi_*(\gamma_2)$ if and only if 
$$\sum_{l=1}^m (\gamma_{1,l}-\gamma_{2,l})Y_l\in \ker\phi.$$
The map $\phi_*:\mathcal{E}(\mathfrak{g},N)\to \mathcal{E}(\mathfrak{g}',N)$ extends to a well defined surjection $\phi_*:\mathcal{E}_\Phi(\mathfrak{g},N)\to \mathcal{E}(\mathfrak{g}',N)$ where $\Phi=\hat{\phi} (\widehat{G'})\setminus \{1\}\subseteq \widehat{G}\setminus \{1\}$ and $\hat{\phi}:\widehat{G'}\to \widehat{G}$ denotes the induced map on the spectrum.
\end{prop}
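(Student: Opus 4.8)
The plan is to build $\phi_*$ directly from the algebra side, then transport analytic statements (injectivity of represented operators) along $\hat\phi$. First I would note that since $\phi$ is a graded Lie algebra homomorphism, it restricts to $\phi_{-2}:\mathfrak{g}_{-2}\to\mathfrak{g}'_{-2}$, and the isometry hypothesis on $\mathfrak{g}_{-1}/(\ker\phi\cap\mathfrak{g}_{-1})\to\mathfrak{g}'_{-1}$ lets me choose the orthonormal bases compatibly: pick $X_1,\dots,X_n$ orthonormal in $\mathfrak{g}_{-1}$ so that $X_1,\dots,X_{n'}$ maps to an orthonormal basis of $\mathfrak{g}'_{-1}$ and $X_{n'+1},\dots,X_n$ spans $\ker\phi\cap\mathfrak{g}_{-1}$. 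Then $\mathcal{U}(\phi)$ sends $\sum_{j=1}^n X_j^2$ to $\sum_{j=1}^{n'}X_j'^2$ (the omitted generators die), and sends each $Y_l$ to $\phi_{-2}(Y_l)\in\mathfrak{g}'_{-2}$, so $\mathcal{U}(\phi)(D_\gamma)=\sum_{j=1}^{n'}X_j'^2+\sum_{l=1}^m\gamma_l\,\phi_{-2}(Y_l)$. Expanding $\phi_{-2}(Y_l)$ in the orthonormal basis $(Y'_k)_{k=1}^{m'}$ of $\mathfrak{g}'_{-2}$ defines the linear (hence well-defined) map $\phi_*:M_N(\C)^m\to M_N(\C)^{m'}$ by $\phi_*(\gamma)_k:=\sum_l (\phi_{-2})_{kl}\gamma_l$, and by construction $\mathcal{U}(\phi)(D_\gamma)=D_{\phi_*(\gamma)}$. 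From this formula $\phi_*(\gamma_1)=\phi_*(\gamma_2)$ iff $\phi_{-2}(\sum_l(\gamma_{1,l}-\gamma_{2,l})Y_l)=0$, i.e. iff $\sum_l(\gamma_{1,l}-\gamma_{2,l})Y_l\in\ker\phi\cap\mathfrak{g}_{-2}=\ker\phi$ (the last equality since $\ker\phi$ is graded and concentrated in degree $-2$ here — more carefully, the relevant element lies in $\mathfrak{g}_{-2}$, so membership in $\ker\phi$ is membership in $\ker\phi\cap\mathfrak{g}_{-2}$); note $\ker\phi$ need not be concentrated in degree $-2$, but the difference element is, so this is fine.

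Next I would handle the representation-theoretic transport. Every irreducible unitary representation $\pi'$ of $G'$ pulls back along the quotient map $q:G\to G'$ (integrating $\phi$) to an irreducible unitary representation $\hat\phi(\pi')=\pi'\circ q$ of $G$, with the same smooth vectors under the identification, and $(\pi'\circ q)(D_\gamma)=\pi'(\mathcal{U}(\phi)(D_\gamma))=\pi'(D_{\phi_*(\gamma)})$ as operators on $C^\infty(\pi')$. Moreover $\pi'\circ q$ is trivial iff $\pi'$ is trivial, and the map $\pi'\mapsto\pi'\circ q$ is a homeomorphism onto the closed conical subset $\Phi=\hat\phi(\widehat{G'})\setminus\{1\}\subseteq\widehat G\setminus\{1\}$ (closedness/conicality: $\Phi$ is exactly the set of irreducibles of $G$ that factor through $G'$, i.e. that annihilate $\ker\phi$, a closed condition; the dilations of $\mathfrak{g}$ descend to $\mathfrak{g}'$ since $\phi$ is graded, so $\Phi$ is dilation-invariant). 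Now: $\gamma\in\mathcal{E}(\mathfrak{g},N)$ means $\pi(D_\gamma)$ is injective on $C^\infty(\pi)$ for every nontrivial irreducible $\pi$ of $G$; restricting to $\pi\in\Phi$ this says exactly that $\pi'(D_{\phi_*(\gamma)})$ is injective on $C^\infty(\pi')$ for every nontrivial irreducible $\pi'$ of $G'$, i.e. $\phi_*(\gamma)\in\mathcal{E}(\mathfrak{g}',N)$. This proves $\phi_*:\mathcal{E}(\mathfrak{g},N)\to\mathcal{E}(\mathfrak{g}',N)$ is well defined, and the same computation shows $\phi_*(\mathcal{E}_\Phi(\mathfrak{g},N))\subseteq\mathcal{E}(\mathfrak{g}',N)$.

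For surjectivity of $\phi_*:\mathcal{E}_\Phi(\mathfrak{g},N)\to\mathcal{E}(\mathfrak{g}',N)$, I would argue as follows: given $\gamma'\in\mathcal{E}(\mathfrak{g}',N)\subseteq M_N(\C)^{m'}$, the linear map $\phi_*:M_N(\C)^m\to M_N(\C)^{m'}$ is surjective because $\phi_{-2}:\mathfrak{g}_{-2}\to\mathfrak{g}'_{-2}$ is surjective (as $\phi$ is a surjective graded homomorphism, $\mathfrak{g}'_{-2}=[\mathfrak{g}'_{-1},\mathfrak{g}'_{-1}]=\phi([\mathfrak{g}_{-1},\mathfrak{g}_{-1}])=\phi(\mathfrak{g}_{-2})$, using that both algebras are generated in degree $-1$). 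So choose any $\gamma\in M_N(\C)^m$ with $\phi_*(\gamma)=\gamma'$; then for $\pi\in\Phi\setminus\{1\}$, writing $\pi=\pi'\circ q$, we get $\pi(D_\gamma)=\pi'(D_{\gamma'})$ injective since $\gamma'\in\mathcal{E}(\mathfrak{g}',N)$, hence $\gamma\in\mathcal{E}_\Phi(\mathfrak{g},N)$, and $\phi_*(\gamma)=\gamma'$. The main obstacle I anticipate is not any single hard estimate but rather assembling the representation-theoretic bookkeeping cleanly: verifying that $\Phi=\hat\phi(\widehat{G'})\setminus\{1\}$ really is closed and conical in $\widehat G$, that pullback of irreducibles behaves well on smooth vectors so that injectivity is genuinely preserved, and that $\hat\phi$ is injective (so the two descriptions of $\mathcal{E}_\Phi$ match up) — all standard for simply connected nilpotent Lie groups via Kirillov theory, but requiring care. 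A secondary subtlety is the precise meaning of "$\sum_l(\gamma_{1,l}-\gamma_{2,l})Y_l\in\ker\phi$": I would phrase it as membership of this degree-$(-2)$ element in $\ker\phi$, which since $\ker\phi$ is a graded ideal equals $\ker(\phi|_{\mathfrak{g}_{-2}})=\ker\phi_{-2}$, matching the linear-algebra computation for $\phi_*$.
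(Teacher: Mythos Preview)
Your proposal is correct and follows essentially the same approach as the paper's proof: define $\phi_*$ via $\mathcal{U}(\phi)(D_\gamma)=D_{\phi_*(\gamma)}$ using that the isometry hypothesis forces $\sum_j X_j^2\mapsto\sum_j (X_j')^2$, then transport the Rockland condition along the injective pullback $\hat\phi:\widehat{G'}\to\widehat G$ induced by the surjection $G\to G'$. You are in fact more careful than the paper about the bookkeeping (closedness/conicality of $\Phi$, surjectivity of $\phi_{-2}$, smooth vectors), all of which the paper leaves implicit.
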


\begin{proof}
Since $\phi$ induces an isometry $\mathfrak{g}_{-1}/(\ker\phi\cap \mathfrak{g}_{-1})\to \mathfrak{g}_{-1}'$, surjectivity of $\phi$ implies that 
$$\mathcal{U}(\phi)(D_\gamma)=\mathcal{U}(\phi)\left(\sum_j X_j^2+\sum_l \gamma_lY_l\right)=\sum_j (X_j')^2+\sum_l \gamma_l\phi(Y_l)=D_{\phi_*(\gamma)},$$
where $\phi_*(\gamma)\in M_N(\C)^{m'}$ is determined from $\sum_l \phi_*(\gamma)_lY_l'=\sum_l \gamma_l\phi(Y_l)$. Therefore $\phi_*:M_N(\C)^m\to M_N(\C)^{m'}$ is well defined.

If $\phi$ is surjective, the integrated map $\phi:G\to G'$ is also surjective and induces an injective map $\hat{\phi}:\widehat{G'}\to \widehat{G}$. Therefore, the represented operator $\pi(D_\gamma)$ is injective in any $\pi=\hat{\phi}(\pi')\in \Phi$ if and only if $\pi'(D_{\phi_*(\gamma)})=\pi\circ \phi(D_\gamma)$ is injective for any $\pi'\in \widehat{G'}\setminus \{1\}$. Therefore, $\phi_*(\gamma)\in \mathcal{E}(\mathfrak{g}',N)$ for $\gamma\in \mathcal{E}_\Phi(\mathfrak{g},N)$. The defining equation $\sum_l \phi_*(\gamma)_lY_l'=\sum_l \gamma_l\phi(Y_l)$ and surjectivity of $\phi$ implies that $\phi_*:\mathcal{E}_\Phi(\mathfrak{g},N)\to \mathcal{E}(\mathfrak{g}',N)$ is surjective.
\end{proof}

\begin{remark}
It follows from the proof of Proposition \ref{lknjnada} that when $\Phi=\hat{\phi} (\widehat{G'})\setminus \{1\}$ then for $\gamma\in M_N(\C)^m$, we have that $\gamma\in \mathcal{E}_\Phi(\mathfrak{g},N)$ if and only if $\phi_*(\gamma)\in \mathcal{E}(\mathfrak{g}',N)$. Or in other words, for $\Phi=\hat{\phi} (\widehat{G'})\setminus \{1\}$ we have an equality
\begin{equation}
\label{diendiend}
\mathcal{E}_\Phi(\mathfrak{g},N)=\{\gamma\in M_N(\C)^m: \phi_*(\gamma)\in \mathcal{E}(\mathfrak{g}',N)\}.
\end{equation}
\end{remark}

\begin{cor}
Let $\phi_j:\mathfrak{g}\to \mathfrak{g}(j)$ be a collection of graded, surjective Lie algebra homomorphisms onto Lie algebras such that $(\mathfrak{g}(j))_{-1}$ generate $\mathfrak{g}(j)$. Assume that
$$\widehat{G}=\cup_j \widehat{\phi_j}(\widehat{G_j}).$$
Then, equipping $(\mathfrak{g}(j))_{-1}$ with the inner product induced from $\phi_j$, we have the equality
$$\mathcal{E}(\mathfrak{g},N)=\cap_j \{\gamma\in M_N(\C)^m: (\phi_j)_*\gamma\in \mathcal{E}(\mathfrak{g}(j),N)\}.$$
\end{cor}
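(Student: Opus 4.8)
The plan is to reduce the statement to the identity \eqref{diendiend} obtained in the remark after Proposition \ref{lknjnada}, together with the elementary fact that injectivity of $\pi(D_\gamma)$ for all $\pi$ in a union of subsets of $\widehat{G}$ is the same as injectivity for all $\pi$ in each subset separately.

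First I would set $\Phi_j:=\widehat{\phi_j}(\widehat{G_j})\setminus\{1\}\subseteq\widehat{G}\setminus\{1\}$ for each $j$. Since $(\mathfrak{g}(j))_{-1}$ is equipped with the inner product induced from $\phi_j$, the homomorphism $\phi_j$ restricts to an isometry $\mathfrak{g}_{-1}/(\ker\phi_j\cap\mathfrak{g}_{-1})\xrightarrow{\sim}(\mathfrak{g}(j))_{-1}$, so Proposition \ref{lknjnada} and its following remark apply to each $\phi_j$. Equation \eqref{diendiend} then reads
$$\mathcal{E}_{\Phi_j}(\mathfrak{g},N)=\{\gamma\in M_N(\C)^m:(\phi_j)_*\gamma\in\mathcal{E}(\mathfrak{g}(j),N)\},$$
so the right-hand side of the claimed equality is precisely $\cap_j\mathcal{E}_{\Phi_j}(\mathfrak{g},N)$.

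It remains to identify this intersection with $\mathcal{E}(\mathfrak{g},N)$. The hypothesis $\widehat{G}=\cup_j\widehat{\phi_j}(\widehat{G_j})$ gives $\widehat{G}\setminus\{1\}=\cup_j\Phi_j$, the removal of the trivial representation being harmless since the trivial representation of $G$ is the pullback along each $\phi_j$ of the trivial representation of $G_j$. By the definition of $\mathcal{E}_\Phi(\mathfrak{g},N)$ we have $\mathcal{E}(\mathfrak{g},N)=\mathcal{E}_{\widehat{G}\setminus\{1\}}(\mathfrak{g},N)=\mathcal{E}_{\cup_j\Phi_j}(\mathfrak{g},N)$, and a $\gamma$ lies in $\mathcal{E}_{\cup_j\Phi_j}(\mathfrak{g},N)$ exactly when $\pi(D_\gamma)$ is injective for every $\pi$ in every $\Phi_j$, i.e. exactly when $\gamma\in\mathcal{E}_{\Phi_j}(\mathfrak{g},N)$ for all $j$; combining these gives the corollary. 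The argument uses nothing about the cardinality of the index set $\{j\}$.

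There is essentially no obstacle here: all the substance is contained in Proposition \ref{lknjnada}, and the only points needing attention are verifying that the isometry hypothesis of that proposition is met — which is exactly how $(\mathfrak{g}(j))_{-1}$ is normed — and the minor bookkeeping with the trivial representation when passing between $\widehat{G}$ and $\widehat{G}\setminus\{1\}$.
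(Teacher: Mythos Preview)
Your proof is correct and follows essentially the same route as the paper: define $\Phi_j=\widehat{\phi_j}(\widehat{G_j})\setminus\{1\}$, use the covering hypothesis to write $\mathcal{E}(\mathfrak{g},N)=\cap_j\mathcal{E}_{\Phi_j}(\mathfrak{g},N)$, and then invoke \eqref{diendiend} for each $j$. You have simply spelled out the two points the paper leaves implicit, namely the verification of the isometry hypothesis on $(\mathfrak{g}(j))_{-1}$ and the handling of the trivial representation.
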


\begin{proof}
By assumption, $\mathcal{E}(\mathfrak{g},N)=\cap_j\mathcal{E}_{\Phi_j}(\mathfrak{g},N)$ where $\Phi_j=\widehat{\phi_j}(\widehat{G_j})\setminus \{1\}$. The corollary is now immediate from the observation in Equation \eqref{diendiend}.
\end{proof}

\subsection{Ramifications for the index of sub-Laplacians}

\begin{prop}
\label{lnljnjnad}
Assume that $\mathfrak{X}$ is a Carnot manifold of nilpotency degree $>2$ coming from an equiregular differential system. Then any sub-Laplacian $D_\gamma\in \mathcal{DO}_H^2(\mathfrak{X};E)$ (as in Equation \eqref{jnkjnkjnad}) on a vector bundle $E$ of rank $N$ is $H$-elliptic if and only if 
$$\gamma:(T^{-1}\mathfrak{X})^\perp/(T^{-2}\mathfrak{X})^\perp\to \End(E),$$
for any $x\in \mathfrak{X}$ satisfies that $\gamma_x, -\gamma_x^*\in \mathcal{E}(\mathfrak{t}_H\mathfrak{X}_x,N)$. In particular, if for any $x\in \mathfrak{X}$, $\mathfrak{t}_H\mathfrak{X}_x$ has property $\mathcal{E}^*N$ then whenever $D_\gamma$ is $H$-elliptic then $D_{t\gamma}$ is $H$-elliptic for any $t\in [0,1]$. 
\end{prop}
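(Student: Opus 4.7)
The plan is to reduce the stated equivalence to a pointwise statement via the characterization from \cite{davehaller} recalled in the introduction: $D_\gamma$ is $H$-elliptic if and only if both $D_\gamma$ and $D_\gamma^*$ satisfy the Rockland condition at every point of $\mathfrak{X}$. The Rockland condition at $x$ depends only on the Heisenberg principal symbol $\sigma_H^2(D_\gamma)_x \in \mathcal{U}_2(\mathfrak{t}_H\mathfrak{X}_x)$, and by construction of $D_\gamma$ in \eqref{jnkjnkjnad} this symbol is represented by $\sum_{j} X_j^2 + \sum_{l} \gamma_{l,x} Y_l$ in the universal enveloping algebra. Thus the Rockland condition for $D_\gamma$ at $x$ is, by definition of $\mathcal{E}(\mathfrak{t}_H\mathfrak{X}_x,N)$, exactly the statement $\gamma_x \in \mathcal{E}(\mathfrak{t}_H\mathfrak{X}_x,N)$.

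The main analytic step is to identify the Heisenberg principal symbol of the formal adjoint. After fixing a smooth volume form on $\mathfrak{X}$ and a Hermitian metric on $E$, each real vector field $V \in \{X_j, Y_l\}$ has adjoint $V^* = -V + h_V$ for a smooth function $h_V$ (the divergence). A direct computation then gives $(X_j^2)^* \equiv X_j^2 \pmod{\mathcal{DO}_H^1(\mathfrak{X};E)}$ and
$$(\gamma_l Y_l)^* = Y_l^*\gamma_l^* = -\gamma_l^* Y_l - Y_l(\gamma_l^*) - h_{Y_l}\gamma_l^*,$$
where the last two terms are multiplication operators, hence of Heisenberg order $0$. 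The hypothesis that the nilpotency degree exceeds $2$ is convenient here in that the various bracket remainders produced along the way genuinely live in Heisenberg order at most one, so no spurious order-two contribution arises. Summing over $j$ and $l$,
$$D_\gamma^* \equiv \sum_{j} X_j^2 - \sum_{l} \gamma_l^* Y_l = D_{-\gamma^*} \pmod{\mathcal{DO}_H^1(\mathfrak{X};E)},$$
so $D_\gamma^*$ and $D_{-\gamma^*}$ share a Heisenberg principal symbol. The Rockland condition for $D_\gamma^*$ at $x$ therefore reads $-\gamma_x^* \in \mathcal{E}(\mathfrak{t}_H\mathfrak{X}_x,N)$, which together with the previous paragraph establishes the equivalence in the first half of the proposition.

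For the final assertion, suppose $\mathfrak{t}_H\mathfrak{X}_x$ has property $\mathcal{E}^*N$ for every $x \in \mathfrak{X}$, and that $D_\gamma$ is $H$-elliptic. By the first part, $\gamma_x, -\gamma_x^* \in \mathcal{E}(\mathfrak{t}_H\mathfrak{X}_x,N)$ for every $x$. For any $t \in (0,1]$, property $\mathcal{E}^*N$ applied pointwise yields $t\gamma_x \in \mathcal{E}(\mathfrak{t}_H\mathfrak{X}_x,N)$, and the identity $-(t\gamma)_x^* = t(-\gamma_x^*)$ together with the same property yields $-(t\gamma)_x^* \in \mathcal{E}(\mathfrak{t}_H\mathfrak{X}_x,N)$. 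For $t=0$ both conditions reduce to $0 \in \mathcal{E}(\mathfrak{t}_H\mathfrak{X}_x,N)$, which holds by Proposition \ref{somsplpklad}(3). Applying the first half of the proposition to $t\gamma$ concludes that $D_{t\gamma}$ is $H$-elliptic for every $t \in [0,1]$. I expect the only real obstacle to be the adjoint computation: the manipulations are algebraically routine, but one must carefully track \emph{Heisenberg} (not classical) orders of the divergence and commutator remainders to be sure they land in $\mathcal{DO}_H^1$ and do not alter the order-two principal symbol.
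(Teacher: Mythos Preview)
Your proof is correct and follows the same approach as the paper: invoke the Dave--Haller equivalence between $H$-ellipticity and the Rockland condition for both $D_\gamma$ and $D_\gamma^*$, identify the latter with the pointwise membership conditions $\gamma_x,-\gamma_x^*\in\mathcal{E}(\mathfrak{t}_H\mathfrak{X}_x,N)$, and then apply property $\mathcal{E}^*N$ together with Proposition~\ref{somsplpklad}(3) for the case $t=0$. Your explicit adjoint computation is a welcome addition that the paper leaves implicit; one minor remark is that the hypothesis of nilpotency degree $>2$ is not actually needed for that computation (the divergence and commutator remainders land in $\mathcal{DO}_H^1$ regardless), so you can safely drop that aside.
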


We note that by \cite{rothcrit}, the Rockland condition is equivalent to maximal hypoellipticity. 

\begin{proof}
It follows from the equivalence of the Rockland condition for $D_\gamma$ and $D_\gamma^*$ with H-ellipticity (see \cite{davehaller}) that $D_\gamma\in \mathcal{DO}_H^2(\mathfrak{X};E)$ is $H$-elliptic if and only if $\gamma_x,-\gamma_x^*\in \mathcal{E}(\mathfrak{t}_H\mathfrak{X}_x,N)$ for any $x\in \mathfrak{X}$. By definition, if $\mathfrak{g}$ has property $\mathcal{E}^*N$ then $t\mathcal{E}(\mathfrak{g},N)\subseteq\mathcal{E}(\mathfrak{g},N)$ for any $t\in (0,1]$ and by Proposition \ref{somsplpklad}, item (3), $0=0.\mathcal{E}(\mathfrak{g},N)\subseteq\mathcal{E}(\mathfrak{g},N)$. In particular, $t\mathcal{E}(\mathfrak{g},N)\subseteq\mathcal{E}(\mathfrak{g},N)$ for any $t\in [0,1]$ if $\mathfrak{g}$ has property $\mathcal{E}^*N$. The second conclusion follows.
\end{proof}

\begin{thm}
\label{indexzerofromegn}
Assume that $\mathfrak{X}$ is a compact Carnot manifold of nilpotency degree $>2$ coming from an equiregular differential system. If for any $x\in \mathfrak{X}$, $\mathfrak{t}_H\mathfrak{X}_x$ has property $\mathcal{E}^*N$, then any sub-Laplacian $D_\gamma\in \mathcal{DO}_H^2(\mathfrak{X};E)$ (as in Equation \eqref{jnkjnkjnad}) on a vector bundle $E$ of rank $N$ which is $H$-elliptic will have vanishing index. In particular, Problem \ref{mainprob} has a negative solution and we even have $[D_\gamma]=0$ in $K_0(\mathfrak{X})$.
\end{thm}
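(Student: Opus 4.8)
The plan is to reduce the vanishing of $[D_\gamma]\in K_0(\mathfrak{X})$ to the homotopy invariance of the $K$-homology class defined by an $H$-elliptic operator, combined with the scaling statement already established in Proposition \ref{lnljnjnad}. First I would recall the construction: an $H$-elliptic operator $D_\gamma\in\mathcal{DO}_H^2(\mathfrak{X};E)$ on a compact Carnot manifold defines, via the Heisenberg calculus, a Fredholm operator on the appropriate Heisenberg--Sobolev scale, and hence an unbounded (or bounded, after the standard functional-calculus normalization $D_\gamma(1+D_\gamma^*D_\gamma)^{-1/2}$ on the relevant Hilbert module) $K$-homology cycle representing a class $[D_\gamma]\in K_0(\mathfrak{X})$; its image under the index pairing with $K^0(\mathfrak{X})$ recovers $\mathrm{index}(D_\gamma)$ (and more generally all twisted indices). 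This is the standard formalism of \cite{BD,HigRoe,baumvanerp}, which I would cite rather than reprove.

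The key step is then to produce an explicit homotopy of $H$-elliptic operators connecting $D_\gamma$ to something manifestly trivial. By Proposition \ref{lnljnjnad}, the hypothesis that $\mathfrak{t}_H\mathfrak{X}_x$ has property $\mathcal{E}^*N$ for every $x$ guarantees that if $D_\gamma$ is $H$-elliptic then so is $D_{t\gamma}$ for every $t\in[0,1]$: indeed $\gamma_x,-\gamma_x^*\in\mathcal{E}(\mathfrak{t}_H\mathfrak{X}_x,N)$ for all $x$, property $\mathcal{E}^*N$ gives $t\gamma_x\in\mathcal{E}(\mathfrak{t}_H\mathfrak{X}_x,N)$ for all $t\in[0,1]$, and since $(-\gamma^*)$ satisfies the same hypotheses, $-t\gamma_x^* = -(t\gamma_x)^*\in\mathcal{E}(\mathfrak{t}_H\mathfrak{X}_x,N)$ as well; hence $D_{t\gamma}$ is $H$-elliptic. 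This yields a norm-continuous path $t\mapsto D_{t\gamma}$ of $H$-elliptic operators — here one should be slightly careful and note that the lower-order terms $\mathcal{DO}_H^1(\mathfrak{X};E)$ in \eqref{jnkjnkjnad} do not affect $H$-ellipticity (since $H$-ellipticity only depends on the Heisenberg principal symbol), so one may as well scale them to zero too, or keep them; either way the path stays inside the $H$-elliptic operators. At $t=0$ we land on $D_0=\sum_j X_j^2 + \mathcal{DO}_H^1$, or after also killing the lower-order part, on $\sum_j X_j^2$, which is (essentially) self-adjoint. The standard argument — a continuous path of Fredholm modules has constant $K$-homology class, cf. the deformation arguments in \cite{baumvanerp,mohsen2} — then gives $[D_\gamma] = [D_0]$ in $K_0(\mathfrak{X})$.

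Finally I would argue $[D_0]=0$: the operator $\sum_j X_j^2$ (or any $H$-elliptic operator whose Heisenberg symbol is self-adjoint and, after the grading, can be taken positive) has symmetric symbol, so its $K$-homology class is $2$-torsion — but more concretely, $D_0$ acting on $E$ with self-adjoint leading term is, up to lower-order $H$-elliptic perturbation and the usual normalization, homotopic to an operator of the form $F\oplus F$ or to a multiplication operator, hence represents $0\in K_0(\mathfrak{X})$; alternatively one invokes that the Connes--Skandalis / van Erp index class of a sub-Laplacian with scalar self-adjoint principal symbol lies in the image of the unit and therefore vanishes in reduced $K$-homology. (In the contact case this is exactly how \cite{baumvanerp} proceeds.) The statement about Problem \ref{mainprob} having a negative solution is then immediate by pairing $[D_\gamma]=0$ with $K^0(\mathfrak{X})$ to conclude $\mathrm{index}(D_\gamma)=0$ for all $\gamma$.

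The main obstacle I anticipate is not the homotopy itself — that is handed to us cleanly by property $\mathcal{E}^*N$ via Proposition \ref{lnljnjnad} — but rather verifying that the path $t\mapsto D_{t\gamma}$ is continuous \emph{in the right topology} for $K$-homology classes to be locally constant along it. One must check that $H$-ellipticity along the path is \emph{uniform} (a uniform symbol bound, so that the resolvents $(1+D_{t\gamma}^*D_{t\gamma})^{-1}$ vary norm-continuously and the normalized operators form a norm-continuous path of Fredholm modules); this uniformity is exactly the content of Proposition \ref{somsplpklad}(2) (openness of $\mathcal{E}_\Phi$, resting on the uniform maximal estimates of \cite{nourrigat87}) applied fibrewise and then over the compact manifold $\mathfrak{X}$, so the ingredients are all present in the excerpt, but stitching them into a genuinely continuous family of $K$-homology cycles is the step requiring care.
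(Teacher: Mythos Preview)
Your approach is essentially identical to the paper's: invoke Proposition \ref{lnljnjnad} to obtain the path $t\mapsto D_{t\gamma}$ of $H$-elliptic operators, use continuity to conclude $[D_\gamma]=[D_0]$, and then argue $[D_0]=0$. The paper disposes of the last step in one line by observing that $D_0$ is a relatively compact perturbation of a self-adjoint operator (namely $\sum_j X_j^2$), which is more direct than your circuit through $2$-torsion and Connes--Skandalis; likewise the continuity concern you flag is simply asserted in the paper and the $K$-homology statement is handled ``mutatis mutandis''.
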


\begin{proof}
If $D_\gamma$ is $H$-elliptic, Proposition \ref{lnljnjnad} implies that 
the operator $D_{t\gamma}$ is $H$-elliptic for any $t\in [0,1]$. The path $(D_{t\gamma})_{t\in [0,1]}$ is continuous so 
$$\mathrm{index}(D_\gamma)=\mathrm{index}(D_0)=0,$$
since $D_0$ is a relatively compact perturbation of a self-adjoint operator. The argument that $[D_\gamma]=0$ in $K_0(\mathfrak{X})$ goes mutatis mutandis.
\end{proof}

\section{Sufficient conditions and necessary conditions} 
\label{seconestan}

\subsection{Results of Helffer and Rothschild-Stein}

Both Helffer \cite{helffercomp}, see also \cite{helffpdenil}, and Rothschild-Stein \cite{rothstein} provide sufficient conditions and necessary conditions for $\gamma\in \mathcal{E}(\mathfrak{g},1)$. Recall the construction of the inclusion $\iota:\mathfrak{g}_{-2}\to \mathfrak{so}(\mathfrak{g}_{-1})$ from Remark \ref{inclusionrem}. We present these results in a condensed form:

\begin{thm}
\label{lnonad}
Let $\mathfrak{g}$ be a graded nilpotent Lie algebra of nilpotency degree $r$ and generated by $\mathfrak{g}_{-1}$. Equip $\C^m$ with the norm induced from $\C^m\cong \mathfrak{g}_{-2}\otimes_\R\C\xrightarrow{\iota}\mathfrak{so}(\mathfrak{g}_{-1}\otimes_\R\C )$ and the operator norm on $\mathfrak{so}$. Identify $\R^m\subseteq \C^m$ the closed subspace closed under entrywise conjugation. 
\begin{enumerate}
\item If $\mathrm{dim}(\mathfrak{g}_{-2})\neq 2$ it holds that
$$\mathcal{E}(\mathfrak{g},1)=\{\gamma\in \C^m: \|\Im(\gamma)\|_{\R^m/\R\Re(\gamma))}<1\}.$$
\item Assume that $r>2$. We have that $\gamma\notin \mathcal{E}(\mathfrak{g},1)$ if $\Re(\gamma)=0$ and $\|\Im(\gamma)\|_{\C^m}\geq 1$.
\end{enumerate}
\end{thm}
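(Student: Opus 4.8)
The plan is to derive both statements from the work of Helffer \cite{helffercomp} and Rothschild--Stein \cite{rothstein}, the real task being to transcribe their coordinate-dependent conclusions into the intrinsic language of the inclusion $\iota$ of Remark \ref{inclusionrem}. First I would recall that $\gamma\in\mathcal{E}(\mathfrak{g},1)$ means exactly that $\pi(D_\gamma)$ is injective on smooth vectors for every non-trivial irreducible unitary representation $\pi$ of $G$, and parametrise these via Kirillov's orbit method as $\pi=\pi_\ell$, $\ell\in\mathfrak{g}^*\setminus\{0\}$. Because $D_\gamma$ is homogeneous of Heisenberg degree $2$ and $\sum_j X_j^2$ is itself hypoelliptic (Proposition \ref{somsplpklad}(3)), a dilation argument reduces the problem to $\ell$ whose component $\xi:=\ell|_{\mathfrak{g}_{-2}}$ has unit length; when $\xi=0$ the operator $\pi_\ell(\sum_j X_j^2)$ is strictly negative on $C^\infty(\pi_\ell)$, so injectivity is automatic.

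The second step is the Stone--von Neumann normal form, carried out on suitable graded quotients $\mathfrak{g}\to\mathfrak{g}'$ (using Proposition \ref{lknjnada} and its corollary) so that the strata in play are put into normal position. Writing $B_\xi(X,X'):=\xi([X,X'])$, which by Remark \ref{inclusionrem} is the antisymmetric form $\iota(\xi^\sharp)\in\mathfrak{so}(\mathfrak{g}_{-1})$ attached to the metric dual $\xi^\sharp$, one finds after the rescaling built into the Rockland reduction that $\pi_\ell(\sum_j X_j^2)$ is unitarily conjugate to $-|\eta|^2-\sum_k\mu_k(\xi)(2N_k+1)$. Here $\mu_1(\xi)\geq\cdots\geq\mu_p(\xi)>0$ are the positive symplectic eigenvalues of $\iota(\xi^\sharp)$, so $\mu_1(\xi)=\|\iota(\xi^\sharp)\|$ and $\sum_k\mu_k(\xi)$ is half its trace norm; $N$ runs over $\N^p$; and $\eta$ records the restriction of $\ell$ to a radical of $B_\xi$ inside $\mathfrak{g}_{-1}$ together with the strata $\mathfrak{g}_{-j}$ for $j\geq 3$. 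Meanwhile $\pi_\ell(\sum_l\gamma_l Y_l)$ reduces to the scalar $i\,\xi(\gamma)$. It follows that $\pi_\ell(D_\gamma)$ is non-injective precisely when $\xi(\Re(\gamma))=0$ and $-\xi(\Im(\gamma))$ lies in the spectral set $\{|\eta|^2+\sum_k\mu_k(\xi)(2N_k+1)\}$; over all extensions $\ell$ of a fixed $\xi$ this set is the half-line $[\sum_k\mu_k(\xi),\infty)$ whenever a radical or a higher stratum is available, and is the discrete lattice $\{\sum_k\mu_k(\xi)(2N_k+1):N\in\N^p\}$ when $\iota(\xi^\sharp)$ is non-degenerate and $r=2$.

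For the sufficiency half of (1) I would appeal to Helffer's a priori estimates \cite{helffercomp}: unravelling the isometry of Remark \ref{inclusionrem}, the hypothesis $\|\Im(\gamma)\|_{\R^m/\R\Re(\gamma)}<1$ keeps $-\xi(\Im(\gamma))$ off the half-line $[\sum_k\mu_k(\xi),\infty)$ for every unit $\xi$ annihilating $\Re(\gamma)$, hence off every spectral set above, so $\pi_\ell(D_\gamma)$ is injective for all admissible $\ell$. For the necessity half one runs this in reverse: given $\|\Im(\gamma)\|_{\R^m/\R\Re(\gamma)}\geq 1$, pick $\xi$ realising the quotient norm, with $\xi(\Re(\gamma))=0$ and $-\xi(\Im(\gamma))$ at least the relevant symplectic quantity, and construct an extension $\ell$ whose spectral set reaches $-\xi(\Im(\gamma))$ -- using a radical of $\iota(\xi^\sharp)$ if one is present, and otherwise letting $\xi$ vary over a family large enough that the union of the discrete lattices covers the required value. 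This is exactly the construction of \cite[Theorem 2]{rothstein}, and the configuration $\dim(\mathfrak{g}_{-2})=2$ is the one it leaves open; it is supplied separately in Section \ref{fillingap}, which is why the dimensional hypothesis is imposed.

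Part (2) is a weaker necessity statement that drops the dimensional restriction at the cost of assuming $r>2$ and $\Re(\gamma)=0$: the condition $\Re(\gamma)=0$ makes $\xi(\Re(\gamma))=0$ automatic for every $\xi$, while $r>2$ ensures that for a generic unit $\xi$ there is a non-trivial contribution from some $\mathfrak{g}_{-j}$ with $j\geq 3$, so that $\pi_\ell(\sum_j X_j^2)$ already has continuous spectrum $(-\infty,-\mu_1(\xi)]$; then $\pi_\ell(D_\gamma)$ is non-injective as soon as $|\xi(\Im(\gamma))|\geq\mu_1(\xi)=\|\iota(\xi^\sharp)\|$, and maximising over unit $\xi$ yields $\|\Im(\gamma)\|_{\C^m}\geq 1$ as the obstruction (for $r=2$ the spectrum can stay discrete and the conclusion genuinely fails). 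I expect the main obstacle to lie in the necessity direction of (1): both the exact identification of $\|\cdot\|_{\R^m/\R\Re(\gamma)}$ with a supremum of symplectic pairings, and, above all, the representation-theoretic construction that realises that supremum by an honest irreducible representation -- the delicate case being precisely $\dim(\mathfrak{g}_{-2})=2$.
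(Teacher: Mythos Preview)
Your proposal goes well beyond what the paper does at this point. The paper treats Theorem~\ref{lnonad} purely as a citation: part~(1) is \cite[Theorems~1 and~2]{rothstein} and part~(2) is \cite[Th\'eor\`eme~5]{helffercomp}; the only work the paper does is to translate those results, which are phrased in the commutator description~\eqref{blabla}, into the intrinsic language of $\iota$ via the isomorphism $\delta$ of Proposition~\ref{knlknada} and Remark~\ref{inclusionrem}. Your sketch instead attempts to reconstruct the underlying arguments by a representation-by-representation spectral analysis. That is closer in spirit to the Boutet--Grigis--Helffer criterion used in Section~\ref{fillingap} than to Rothschild--Stein's original route: for sufficiency they (and the paper, in the $N>1$ analogue Theorem~\ref{lnonadhighN}) prove a direct $L^2$ a~priori estimate $\sum_j\|X_jf\|^2\leq A|\langle f,D_\gamma f\rangle|$, with no orbit-by-orbit decomposition at all. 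Attributing the sufficiency half to Helffer's estimates is also a mislabelling; it is \cite[Theorem~1]{rothstein}.

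More substantively, you have conflated two distinct issues: the gap that Section~\ref{fillingap} fills, and the reason for the hypothesis $\dim(\mathfrak{g}_{-2})\neq 2$. The gap in \cite{rothstein} is the case $2l=n$, i.e.\ when the Kirillov form $\omega_\xi$ has \emph{full rank} on $\mathfrak{g}_{-1}$; Rothschild--Stein implicitly assumed a nontrivial radical was available to carry the continuous spectral parameter, and Section~\ref{fillingap} repairs this via a mean-value argument on the sphere in $\mathfrak{g}_{-2}^*$. The hypothesis $\dim(\mathfrak{g}_{-2})\neq 2$ serves a different purpose: it guarantees that the step-$2$ quotient $(\mathfrak{g}_{-1}\oplus\mathfrak{g}_{-2})/\R\Re(\gamma)$ is not a Heisenberg algebra, the one situation in which the spectrum genuinely stays a discrete lattice and the stated description of $\mathcal{E}(\mathfrak{g},1)$ can fail. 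Section~\ref{fillingap} does \emph{not} cover this case --- it explicitly stops there --- so your sentence ``the configuration $\dim(\mathfrak{g}_{-2})=2$ \ldots\ is supplied separately in Section~\ref{fillingap}'' is incorrect, and indeed is internally inconsistent with your next clause ``which is why the dimensional hypothesis is imposed''. The hypothesis is imposed precisely because that case is \emph{not} supplied anywhere.
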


Part 1) of this theorem is found as \cite[Theorem 1 and 2]{rothstein} and Part 2) of this theorem is stated as \cite[Theoreme 5]{helffercomp}. In \cite{helffercomp,helffpdenil,rothstein}, the results are stated in terms of the commutator description \eqref{blabla} of the operators $D_\gamma$. Said results translate to the setting of Theorem \ref{lnonad} using the mapping $\delta$ of Proposition \ref{knlknada}. We note that even though the statement of \cite[Theorem 2]{rothstein} is correct, its proof contains a minor gap that is filled in Section \ref{fillingap} below. In \cite[Theorem 2]{rothstein}, the necessity of $\|\Im(\gamma)\|_{\R^m/\R\Re(\gamma))}<1$ is proved under the condition that $(\mathfrak{g}_{-1}\oplus\mathfrak{g}_{-2})/(\R\sum_l \Re(\gamma)_lY_l)$ is not a Heisenberg algebra, which holds if $\mathrm{dim}(\mathfrak{g}_{-2})\neq 2$.

\begin{cor}
\label{cortors}
Any graded nilpotent Lie algebra $\mathfrak{g}$ such that $\mathfrak{g}_{-1}$ generates $\mathfrak{g}$ and $\mathrm{dim}(\mathfrak{g}_{-2})\neq 2$ has property $\mathcal{E}^*1$. 
\end{cor}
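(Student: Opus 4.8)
The plan is to combine the explicit description of $\mathcal{E}(\mathfrak{g},1)$ furnished by Theorem \ref{lnonad}(1) with the elementary observation that the set appearing there is starshaped with respect to the origin. Recall that property $\mathcal{E}^*1$ demands $t\,\mathcal{E}(\mathfrak{g},1)\subseteq\mathcal{E}(\mathfrak{g},1)$ for every $t\in(0,1]$, so everything reduces to checking this one containment.

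Since $\dim(\mathfrak{g}_{-2})\neq 2$, Theorem \ref{lnonad}(1) identifies
$$\mathcal{E}(\mathfrak{g},1)=\{\gamma\in\C^m:\ \|\Im(\gamma)\|_{\R^m/\R\Re(\gamma)}<1\}.$$
Fix $\gamma\in\mathcal{E}(\mathfrak{g},1)$ and write $\gamma=a+ib$ with $a=\Re(\gamma)$ and $b=\Im(\gamma)$ in $\R^m$; fix $t\in(0,1]$. Then $\Re(t\gamma)=ta$ and $\Im(t\gamma)=tb$. The point worth a moment's care is that $\R(ta)=\R a$ as a subspace of $\R^m$ — this holds for every nonzero $t$, and trivially also when $a=0$ — so the quotient normed space $\R^m/\R\Re(t\gamma)$ literally coincides with $\R^m/\R\Re(\gamma)$. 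Homogeneity of the quotient norm then gives
$$\|\Im(t\gamma)\|_{\R^m/\R\Re(t\gamma)}=\|tb\|_{\R^m/\R a}=t\,\|b\|_{\R^m/\R a}<t\le 1,$$
where the strict inequality is precisely the hypothesis $\gamma\in\mathcal{E}(\mathfrak{g},1)$. Hence $t\gamma\in\mathcal{E}(\mathfrak{g},1)$, which is exactly property $\mathcal{E}^*1$.

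There is no real obstacle beyond correctly citing Theorem \ref{lnonad}(1) (itself resting on \cite{rothstein} together with the gap-filling of Section \ref{fillingap}); once the description of $\mathcal{E}(\mathfrak{g},1)$ is in hand, the argument is just the remark that dilating $\gamma$ by a nonzero scalar leaves the line $\R\Re(\gamma)$ fixed, so that the seminorm measuring $\Im(\gamma)$ is unchanged and pure homogeneity finishes the proof.
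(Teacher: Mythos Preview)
Your proof is correct and follows essentially the same route as the paper's: both invoke Theorem \ref{lnonad}(1) for the explicit description of $\mathcal{E}(\mathfrak{g},1)$, observe that $\R\Re(t\gamma)=\R\Re(\gamma)$ for $t\in(0,1]$, and then use homogeneity of the quotient norm to conclude. The only cosmetic difference is that the paper phrases the argument as a chain of set inclusions while you work elementwise with a fixed $\gamma$.
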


\begin{proof}
We need to verify that for any $t\in (0,1]$, we have an inclusion
$$t\mathcal{E}(\mathfrak{g},1)\subseteq\mathcal{E}(\mathfrak{g},1).$$
Using that $\R\Re(\gamma)=\R\Re(t\gamma)$ for any $t\in (0,1]$, Theorem \ref{lnonad} implies that 
\begin{align*}
t\mathcal{E}(\mathfrak{g},1)=&\{t\gamma\in \C^m: \|\Im(\gamma)\|_{\R^m/\R\Re(\gamma))}<1\}\\
=&\{t\gamma\in \C^m: \|\Im(t\gamma)\|_{\R^m/\R\Re(\gamma))}<t\}\\
\subseteq& \{t\gamma\in \C^m: \|\Im(t\gamma)\|_{\R^m/\R\Re(t\gamma))}<1\}\\
\subseteq& \{\gamma\in \C^m: \|\Im(\gamma)\|_{\R^m/\R\Re(\gamma))}<1\}=\mathcal{E}(\mathfrak{g},1).
\end{align*}

\end{proof}

For a real number $p>0$, write $H(p):=(-\infty,-p]\cup[p,\infty)\subseteq \C$.

\begin{prop}
\label{alknalkdnad}
Assume that $\mathfrak{g}$ is nilpotent of step length $>2$ and that $\mathfrak{g}_{-2}\equiv [\mathfrak{g}_{-1},\mathfrak{g}_{-1}]$ has dimension $1$. Then it holds that 
$$\mathcal{E}(\mathfrak{g},N)=\{\gamma\in M_N(\C): H(1)\cap \mathrm{Spec}(i\gamma)=\emptyset\}.$$
In particular, any nilpotent Lie algebra $\mathfrak{g}$ of step length $>2$ and with $\mathfrak{g}_{-2}$ one-dimensional has property $\mathcal{E}^*N$ for any $N$.
\end{prop}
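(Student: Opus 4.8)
The plan is to reduce the matrix statement to the scalar case $N=1$, which is exactly Theorem \ref{lnonad}, by a soft argument with triangular forms, and then to read off property $\mathcal{E}^*N$ from the resulting spectral description.

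First I would record the scalar case. Since $m=\dim(\mathfrak{g}_{-2})=1\neq 2$, Theorem \ref{lnonad}(1) applies and yields $\mathcal{E}(\mathfrak{g},1)=\{\gamma\in\C:\|\Im(\gamma)\|_{\R^m/\R\Re(\gamma)}<1\}$. Unwinding the one--dimensional quotient norm, this condition reads ``$\Re(\gamma)\neq 0$, or $|\Im(\gamma)|<1$'', which is exactly ``$i\gamma\notin H(1)$''; hence $\mathcal{E}(\mathfrak{g},1)=\{\gamma\in\C:H(1)\cap\{i\gamma\}=\emptyset\}$. (The delicate half is the necessity of $|\Im(\gamma)|<1$ when $\Re(\gamma)=0$, where the pertinent quotient of $\mathfrak{g}_{-1}\oplus\mathfrak{g}_{-2}$ may be a Heisenberg algebra; this is the regime of Theorem \ref{lnonad}(2), the Helffer half, and of the argument completed in Section \ref{fillingap}, and it is here that the step length $>2$ is used.)

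Next comes the new point, a reduction of the $N$--dependence. I claim that, for $m=1$,
$$\gamma\in\mathcal{E}(\mathfrak{g},N)\quad\Longleftrightarrow\quad \mathrm{Spec}(\gamma)\subseteq\mathcal{E}(\mathfrak{g},1).$$
For ``$\Rightarrow$'': if some $\lambda\in\mathrm{Spec}(\gamma)$ lies outside $\mathcal{E}(\mathfrak{g},1)$, pick $v\neq 0$ with $\gamma v=\lambda v$, a non-trivial irreducible unitary representation $\pi$ of $G$, and a smooth vector $\psi\neq 0$ with $\pi(D_\lambda)\psi=0$, where $D_\lambda=\sum_j X_j^2+\lambda Y$; then $\pi(D_\gamma)(\psi\otimes v)=\bigl(\pi(\sum_j X_j^2)\psi+\lambda\,\pi(Y)\psi\bigr)\otimes v=(\pi(D_\lambda)\psi)\otimes v=0$, so $\pi(D_\gamma)$ is not injective and $\gamma\notin\mathcal{E}(\mathfrak{g},N)$. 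For ``$\Leftarrow$'': replacing $\gamma$ by $U\gamma U^{-1}$ for a constant invertible $U$ replaces $D_\gamma$ by $(\mathrm{id}\otimes U)D_\gamma(\mathrm{id}\otimes U^{-1})$ and affects neither $\mathrm{Spec}(\gamma)$ nor membership in $\mathcal{E}(\mathfrak{g},N)$, so I may assume by Schur's theorem that $\gamma$ is upper triangular with diagonal entries $\lambda_1,\dots,\lambda_N\in\mathcal{E}(\mathfrak{g},1)$. If $\pi(D_\gamma)\Psi=0$ for a non-trivial irreducible $\pi$ and a smooth vector $\Psi=(\psi_1,\dots,\psi_N)$, the $N$-th coordinate equation is $\pi(D_{\lambda_N})\psi_N=0$, which forces $\psi_N=0$ since $\lambda_N\in\mathcal{E}(\mathfrak{g},1)$; a downward induction then gives $\psi_j=0$ for all $j$, so $\pi(D_\gamma)$ is injective and $\gamma\in\mathcal{E}(\mathfrak{g},N)$. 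Combining the equivalence with the scalar case yields $\mathcal{E}(\mathfrak{g},N)=\{\gamma:\mathrm{Spec}(\gamma)\subseteq\{c:ic\notin H(1)\}\}=\{\gamma\in M_N(\C):H(1)\cap\mathrm{Spec}(i\gamma)=\emptyset\}$, the first assertion.

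Finally, property $\mathcal{E}^*N$ drops out of this description: for $t\in(0,1]$ one has $\mathrm{Spec}(i(t\gamma))=t\cdot\mathrm{Spec}(i\gamma)$, and $\C\setminus H(1)=\{z:\Im z\neq 0\}\cup(-1,1)$ is stable under multiplication by $t$, so $H(1)\cap\mathrm{Spec}(i\gamma)=\emptyset$ implies $H(1)\cap\mathrm{Spec}(i(t\gamma))=\emptyset$, i.e. $t\mathcal{E}(\mathfrak{g},N)\subseteq\mathcal{E}(\mathfrak{g},N)$. I expect the only genuinely hard ingredient to be the scalar base case, which is imported from Rothschild--Stein and Helffer (and needs the gap-filling of Section \ref{fillingap}); the matrix reduction itself is soft, the single subtlety being that $\gamma$ need not be normal --- this is why the argument goes through an upper-triangular form and a null eigenvector rather than a diagonalisation.
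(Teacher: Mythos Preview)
Your proof is correct and follows essentially the same route as the paper's: both settle $N=1$ by reading off Theorem~\ref{lnonad} in the one-dimensional case (where the quotient norm condition collapses to ``$i\gamma\notin H(1)$''), and both lift to general $N$ by passing to a triangular form and peeling off coordinates. The only cosmetic difference is that the paper invokes Jordan normal form and reduces to a single Jordan block, whereas you use Schur's upper-triangular form directly; your downward induction is in fact slightly cleaner, and you spell out explicitly both the eigenvector argument for ``$\Rightarrow$'' and the star-shapedness of $\C\setminus H(1)$ for property $\mathcal{E}^*N$, which the paper leaves as ``a linear algebra argument'' and ``the corollary follows''.
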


\begin{proof}
First, we consider $N=1$. Since $\mathfrak{g}_{-2}$ has dimension $1$, $\R=\R\Re(\gamma)$ as soon as $\Re(\gamma)\neq 0$ and in this case $\gamma\in \mathcal{E}(\mathfrak{g},1)$ by part 1 of Theorem \ref{lnonad}. If $\Re(\gamma)= 0$, Theorem \ref{lnonad} implies that $\gamma\in \mathcal{E}(\mathfrak{g},1)$ if and only if $\|\Im(\gamma)\|_{\R}<1$. We conclude that $\mathcal{E}(\mathfrak{g},1)=\{\gamma\in \C: i\gamma\notin H(1)\}$ and the corollary follows for $N=1$.

For $N>1$, we can up to an invertible matrix in $\C^N$, assume that $\gamma\in M_N(\C)$ is in Jordan form. Without restriction, we can assume that $\gamma$ is in fact one Jordan block
$$\gamma=
\begin{pmatrix} 
\lambda& 1&0&0&\cdots\\
0&\lambda& 1&0&\cdots\\
\vdots&0&\ddots &\ddots &\vdots\\
0&&&\lambda &1\\
0&&&&\lambda
\end{pmatrix},$$
for some $\lambda\in \C$. Considering the Rockland condition on the bottom right corner, we see from the case $N=1$ that if $\gamma\in \mathcal{E}(\mathfrak{g},N)$ then $i\lambda\notin H(1)$. Conversely, a linear algebra argument and the Rockland condition shows that if $i\lambda\notin H(1)$ then $\gamma\in \mathcal{E}(\mathfrak{g},N)$. The corollary follows.
\end{proof}

\subsection{Related results using a quotient map}

We say that a two-step nilpotent Lie group $\mathfrak{p}=\mathfrak{p}_{-1}\oplus \mathfrak{p}_{-2}=\mathfrak{p}_{-1}\oplus \mathfrak{z}$ is polycontact if for any $\xi\in \mathfrak{z}^*\setminus \{0\}$ the Kirillov form 
$$\omega_\xi(X,Y):=\xi[X,Y],$$
is non-degenerate on $\mathfrak{p}_{-1}=\mathfrak{g}/\mathfrak{z}$.  We write $\R[-1]$ for the graded abelian Lie algebra $\R$ concentrated in degree $-1$. For an inner product space $V$, we write $S(V)$ for its sphere.

\begin{prop}
\label{gener}
Assume that $\mathfrak{g}$ is nilpotent and there is a graded Lie algebra quotient $\phi:\mathfrak{g}\to \mathfrak{p}\oplus \R[-1]$ where $\mathfrak{p}$ is a step $2$ polycontact Lie algebra. Let $m:=\dim(\mathfrak{g}_{-2})$. We set $p:=\dim(\mathfrak{p}_{-1})/2$, $m_\mathfrak{p}:=\dim(\mathfrak{p}_{-2})$ and write $\alpha$ for the smallest positive singular value of the quotient map of inner product spaces $\mathfrak{g}_{-2}\to \mathfrak{p}_{-2}$. Then the constant $ p_\mathfrak{g}:=p\alpha$ satisfies that 
$$\mathcal{E}(\mathfrak{g},N)\subseteq \{\gamma\in M_N(\C)^m: H(p_{\mathfrak{g}}^0)\cap \mathrm{Spec}(\gamma(\xi))=\emptyset\quad \forall \xi\in S(\mathrm{im}(\phi^*)\cap \mathfrak{g}_{-2}^*)\}.$$
In fact,
$$\mathcal{E}(\mathfrak{p}\oplus \R[-1],N)= \{\gamma\in M_N(\C)^{m_{\mathfrak{p}}}: H(p)\cap \mathrm{Spec}(\gamma(\xi))=\emptyset\quad \forall \xi\in S(\mathfrak{p}_{-2}^*)\}.$$
\end{prop}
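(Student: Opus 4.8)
The plan is to prove the exact equality for $\mathfrak{p}\oplus\R[-1]$ first, and then deduce the inclusion for general $\mathfrak{g}$ by pulling back along $\phi$. For the model algebra $\mathfrak{p}\oplus\R[-1]$, I would use the polycontact hypothesis on $\mathfrak{p}$: the group $P$ integrating $\mathfrak{p}$ has, for each $\xi\in S(\mathfrak{p}_{-2}^*)$, an associated irreducible representation $\pi_\xi$ (a tensor product of a Schrödinger representation on the symplectic space $(\mathfrak{p}_{-1},\omega_\xi)$ with a character of the center) in which $\pi_\xi\bigl(\sum_j X_j^2\bigr)$ is, up to the normalisation built into our inner product, the negative of a harmonic oscillator Hamiltonian. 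The $\R[-1]$ summand is abelian and contributes a multiplication by a real parameter to the represented symbol, but since $\R[-1]$ lies in degree $-1$ it does not interact with $\mathfrak{g}_{-2}$; thus for representations $\pi$ of $P\times\R$ that are nontrivial on the $\R$ factor, $\pi(D_\gamma)$ is already injective (the abelian direction furnishes an invertible multiplier). Hence the Rockland condition for $D_\gamma$ on $\mathfrak{p}\oplus\R[-1]$ reduces to the representations of $P$, i.e. to the $\pi_\xi$ together with the one-dimensional representations of $P$, and on the latter $D_\gamma$ acts by $\gamma(\xi)$ for $\xi$ ranging over $\mathfrak{p}_{-2}^*$.

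Next I would carry out the spectral analysis in $\pi_\xi$. After rescaling $\xi$ to unit length, $\pi_\xi\bigl(\sum_j X_j^2\bigr)$ has spectrum the eigenvalues of a $p$-dimensional harmonic oscillator, i.e. $-2(\mathbf{n}+p/2)$-type numbers shifted by the Levi form eigenvalues; with our normalisation the relevant lowest eigenvalue modulus is exactly $p=\dim(\mathfrak{p}_{-1})/2$. Since $\pi_\xi(Y_l)$ acts as the scalar $i\xi_l$ on the representation space, $\pi_\xi(D_\gamma)=\pi_\xi\bigl(\sum X_j^2\bigr)\otimes 1 + 1\otimes\gamma(\xi)$ on smooth vectors, and this is injective for all $\xi$ in the cone over $S(\mathfrak{p}_{-2}^*)$ precisely when $\gamma(\xi)$ has no eigenvalue in $H(p)$ for each unit $\xi$ — here one uses homogeneity (the remark after the definition of $\mathcal{E}_\Phi$) to pass between the whole cone and the transversal $S(\mathfrak{p}_{-2}^*)$, and one checks that the one-dimensional representations impose no further constraint since $H(p)\ni$ nothing forces a stronger condition than injectivity of $\gamma(\xi)$, which is implied by the cone condition as $\xi\to 0$. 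This gives the stated equality for $\mathfrak{p}\oplus\R[-1]$.

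For the inclusion with general $\mathfrak{g}$, I would apply Proposition \ref{lknjnada}: the quotient $\phi:\mathfrak{g}\to\mathfrak{p}\oplus\R[-1]$ induces $\phi_*:\mathcal{E}(\mathfrak{g},N)\to\mathcal{E}(\mathfrak{p}\oplus\R[-1],N)$, so $\gamma\in\mathcal{E}(\mathfrak{g},N)$ forces $\phi_*(\gamma)$ to satisfy the spectral condition just established. One then unwinds the identification: for $\xi\in\mathrm{im}(\phi^*)\cap\mathfrak{g}_{-2}^*$, writing $\xi=\phi^*\eta$ with $\eta\in\mathfrak{p}_{-2}^*$, the represented polynomial satisfies $\gamma(\xi)=\phi_*(\gamma)(\eta)$, and $\|\eta\|\ge\alpha\|\xi\|$ where $\alpha$ is the smallest positive singular value of $\mathfrak{g}_{-2}\to\mathfrak{p}_{-2}$; rescaling, the constraint $H(p)\cap\mathrm{Spec}(\phi_*(\gamma)(\eta/\|\eta\|))=\emptyset$ translates into $H(p\alpha)\cap\mathrm{Spec}(\gamma(\xi))=\emptyset$ for $\xi\in S(\mathrm{im}(\phi^*)\cap\mathfrak{g}_{-2}^*)$, which is the claim with $p_\mathfrak{g}=p\alpha$. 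The main obstacle I anticipate is the bookkeeping of normalisations: getting the constant $p$ exactly right requires matching the inner product on $\mathfrak{p}_{-2}$ induced by the projection $\wedge^2\mathfrak{p}_{-1}\to\mathfrak{p}_{-2}$ against the symplectic normalisation of the Schrödinger representation, and carefully tracking how the singular value $\alpha$ enters when the inner product on $\mathfrak{g}_{-2}$ is pushed to $\mathfrak{p}_{-2}$ — this is exactly the kind of computation that is routine in principle but where the polycontact index theory of \cite{goffkuz} must be cited for the precise value, so I would lean on that reference rather than recomputing the oscillator spectrum from scratch.
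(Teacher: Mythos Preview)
Your overall architecture --- prove the equality for $\mathfrak{p}\oplus\R[-1]$ first, then pull back via Proposition~\ref{lknjnada} and track the scaling factor $\alpha$ --- matches the paper. The reduction step and the singular-value bookkeeping are fine.

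The genuine gap is in your treatment of the $\R[-1]$ factor. You claim that for representations $\pi$ of $P\times\R$ nontrivial on the $\R$ factor, $\pi(D_\gamma)$ is automatically injective because ``the abelian direction furnishes an invertible multiplier,'' and hence the Rockland condition reduces to representations of $P$ alone. This is wrong: the generator $X_0$ of $\R[-1]$ enters $D_\gamma$ as $X_0^2$, so in a representation with parameter $\xi_0$ it contributes only the \emph{scalar} $-\xi_0^2$, not an invertible operator. A scalar shift does not force injectivity of $-\xi_0^2 - |\xi_2|H + \gamma(\xi_2)$.

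In fact the $\R[-1]$ factor is doing essential work, not a negligible one. On $\mathfrak{p}$ alone the flat-orbit representations give the discrete polycontact condition (spectrum of $\gamma(\xi)$ avoids the set $2\N+p$, as recalled in the introduction). The extra parameter $t=\xi_0^2/|\xi_2|\in[0,\infty)$ coming from the $\R[-1]$ direction shifts that discrete set continuously, so that as $(\xi_0,\xi_2)$ ranges over all of $\R\times(\mathfrak{p}_{-2}^*\setminus\{0\})$ the forbidden set becomes
\[
\bigcup_{t\ge 0,\;k\in\N}\{\pm(t+k+p)\}=(-\infty,-p]\cup[p,\infty)=H(p).
\]
This is precisely the mechanism that turns a discrete obstruction into the half-line $H(p)$; if you reduce to $P$-representations only, you will obtain the wrong set on the right-hand side of the equality.
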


\begin{proof}
By Proposition \ref{lknjnada}, it suffices to compute $\mathcal{E}(\mathfrak{p}\oplus \R[-1],N)$. Indeed, linearity of $\xi\mapsto \gamma(\xi)$ and scaling properties of the set-valued function 
$$M_N(\C)^m\ni \gamma\mapsto \cup_{\xi\in S(\mathrm{im}(\phi^*)\cap \mathfrak{g}_{-2}^*)}\mathrm{Spec}(\gamma(\xi)),$$
imply that the constant $p_\mathfrak{g}:=p\alpha$ has the sought after property. Write $X_{0}$ for the relevant basis element of $\R[-1]$ so
$$D_\gamma=X_0^2+\sum_{j=1}^{2p} X_j^2+\sum_{l=1}^{m}\gamma_lY_l.$$

 Let $P$ denote the simply connected Lie group integrating $\mathfrak{p}$. The unitary, irreducible representations of $P\times \R$ are parametrized by $\R^*\times \mathfrak{p}^*/\mathrm{Ad}(P)$. Since $P$ is polycontact, we have that $\mathfrak{p}^*/\mathrm{Ad}(P)=\mathfrak{p}_{-1}^*\sqcup \mathfrak{p}_{-2}^*\setminus \{0\}$, where $\mathfrak{p}_{-1}^*$ induces the characters and $\mathfrak{p}_{-2}^*\setminus \{0\}$ induces the flat orbit representations. For a non-trivial character $\pi=(\xi_0,\xi_1)\in \R^*\times \mathfrak{p}_{-1}^*$, $\pi(D_\gamma)=\xi_0^2+|\xi_1|^2>0$ and it remains to characterize the injectivity of $\pi(D_\gamma)$ for $\pi=(\xi_0,\xi_2)\in \R^*\times \mathfrak{p}_{-2}^*\setminus \{0\}$. A short computation shows that for such representations, 
$$\pi(D_\gamma)=\xi_0^2+|\xi_2|H+\sum_{l=1}^{m}i\xi_{2,l}\gamma_l=\xi_0^2+|\xi_2|H+\gamma(\xi_2),$$
where $\xi_2=(\xi_{2,1},\ldots,\xi_{2,m})$ in the basis $Y_1,\ldots, Y_m$ and $H$ denotes the harmonic oscillator on $\R^p$. As above, we write $\gamma(\xi)=i\sum_{l=1}^{m}\xi_{2,l}\gamma_l$. Using the auxiliary variable $t=\xi_0^2/|\xi_2|$ we conclude from the spectrum of the harmonic oscillator that $\pi(D_\gamma)$ is injective if and only if $\mathrm{Spec}(\gamma(\xi_2/|\xi_2|))$ does not intersect the set 
$$\cup_{t\geq 0, k\in \N} \{(t+k+p),-(t+k+p)\}=H(p).$$
Therefore, $\pi(D_\gamma)$ is injective for all $\pi \in \R^*\times \mathfrak{p}_{-2}^*\setminus \{0\}$ if and only if $\gamma=(\gamma_1,\ldots,\gamma_l)$ satisfies 
$$\mathrm{Spec}(\gamma(\xi))\cap H(p)=\emptyset,$$
for all $\xi\in \mathfrak{p}_{-2}^*\setminus \{0\}$ of unit length.
\end{proof}

\begin{cor}
\label{conjecjnjnpartial}
Let $\mathfrak{g}$ be a graded nilpotent Lie algebra such that $\mathfrak{g}_{-1}$ generates $\mathfrak{g}$. Assume that there are quotient maps $\phi(j):\mathfrak{g}\to \mathfrak{p}(j)\oplus \R[-1]$ for polycontact step two nilpotent Lie algebras $\mathfrak{p}(j)$ such that 
$$S(\mathfrak{g}_{-2}^*)=\cup_j S(\mathrm{im}(\phi(j)^*)\cap \mathfrak{g}_{-2}^*).$$
Then there is a real number $ p_\mathfrak{g}> 0$ such that 
$$\mathcal{E}(\mathfrak{g},N)\subseteq \{\gamma\in M_N(\C)^m: H(p_{\mathfrak{g}}^0)\cap \mathrm{Spec}(\gamma(\xi))=\emptyset\quad \forall \xi\in S(\mathfrak{g}_{-2}^*)\}.$$
\end{cor}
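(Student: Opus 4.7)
The plan is to reduce this corollary directly to Proposition \ref{gener}, applied one quotient map at a time, and then glue the pointwise information together using the covering hypothesis on $S(\mathfrak{g}_{-2}^*)$.

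First I would apply Proposition \ref{gener} to each map $\phi(j):\mathfrak{g}\to \mathfrak{p}(j)\oplus \R[-1]$. This yields a positive constant $p_{\mathfrak{g}(j)}:=p(j)\alpha(j)>0$, where $p(j)=\dim(\mathfrak{p}(j)_{-1})/2$ and $\alpha(j)$ is the smallest positive singular value of the induced linear map $\mathfrak{g}_{-2}\to \mathfrak{p}(j)_{-2}$, together with the inclusion
$$\mathcal{E}(\mathfrak{g},N)\subseteq \{\gamma\in M_N(\C)^m: H(p_{\mathfrak{g}(j)})\cap \mathrm{Spec}(\gamma(\xi))=\emptyset\;\;\forall \xi\in S(\mathrm{im}(\phi(j)^*)\cap \mathfrak{g}_{-2}^*)\}.$$

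Next, I would define $p_\mathfrak{g}:=\sup_j p_{\mathfrak{g}(j)}$, which is positive since each $p_{\mathfrak{g}(j)}$ is. To conclude, fix $\gamma\in \mathcal{E}(\mathfrak{g},N)$ and an arbitrary $\xi\in S(\mathfrak{g}_{-2}^*)$. By the covering hypothesis there exists some index $j_0$ with $\xi\in S(\mathrm{im}(\phi(j_0)^*)\cap \mathfrak{g}_{-2}^*)$, and the inclusion from Step 1 then gives $H(p_{\mathfrak{g}(j_0)})\cap \mathrm{Spec}(\gamma(\xi))=\emptyset$. Because the set $H(p)=(-\infty,-p]\cup[p,\infty)$ is monotonically decreasing in $p$, and since $p_\mathfrak{g}\geq p_{\mathfrak{g}(j_0)}$ by construction, we obtain $H(p_\mathfrak{g})\subseteq H(p_{\mathfrak{g}(j_0)})$, hence $H(p_\mathfrak{g})\cap \mathrm{Spec}(\gamma(\xi))=\emptyset$. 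This holds for every unit $\xi$, establishing the desired inclusion.

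The only genuinely delicate point is that the supremum defining $p_\mathfrak{g}$ be finite, since otherwise $H(p_\mathfrak{g})=\emptyset$ and the conclusion is vacuous. In the intended applications the covering is by finitely many quotients $\phi(j)$ (indeed, since each $\mathrm{im}(\phi(j)^*)\cap \mathfrak{g}_{-2}^*$ is a linear subspace, infinitely many are needed only if none is all of $\mathfrak{g}_{-2}^*$), and finiteness of $p_\mathfrak{g}$ is then automatic. The main subtlety in the argument is therefore not a technical obstacle but the bookkeeping: tracking that taking a $\sup$ (rather than an $\inf$) of the $p_{\mathfrak{g}(j)}$ is what is needed, owing to the orientation-reversing nature of the passage from $p$ to the complement $\C\setminus H(p)$.
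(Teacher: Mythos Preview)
Your argument is correct and is precisely the intended derivation: the paper states this as an immediate corollary of Proposition~\ref{gener} without giving a proof, and applying that proposition to each $\phi(j)$ and taking $p_{\mathfrak{g}}=\sup_j p_{\mathfrak{g}(j)}$ is exactly the one-line deduction implied. Your observation about finiteness of the supremum is a fair caveat on the corollary as stated rather than on your proof; for a genuinely infinite family with $\sup_j p_{\mathfrak{g}(j)}=\infty$ the conclusion would indeed become vacuous, so one should read the hypothesis as involving finitely many quotients (or at least a family with bounded $p_{\mathfrak{g}(j)}$).
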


\subsection{Paraphrasing Rotschild-Stein's theorem when $N>1$}
\label{extrs}

In this subsection we shall partially extend Rotschild-Stein's theorem (summarized in Theorem \ref{lnonad} above) to the matrix case.
Equip $M_N(\C)^m$ with the norm induced from 
$$M_N(\C)^m\cong \mathfrak{g}_{-2}\otimes_\R M_N(\C)\xrightarrow{\iota}\mathfrak{so}(\mathfrak{g}_{-1}\otimes_\R M_N(\C) ),$$ 
and the operator norm on $\mathfrak{so}$. Write $W$ for the subspace of $M_N(\C)^m$ of elements invariant under entrywise hermitean conjugate. 

\begin{thm}
\label{lnonadhighN}
Let $\mathfrak{g}$ be a graded nilpotent Lie algebra generated by $\mathfrak{g}_{-1}$. It holds that
$$\mathcal{E}(\mathfrak{g},N)\supseteq \{\gamma\in M_N(\C)^m: \|\Im(\gamma)\|_{W/\R\Re(\gamma))}<1\}.$$
\end{thm}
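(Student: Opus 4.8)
The plan is to reduce the matrix case to the scalar case $N=1$ treated in Theorem~\ref{lnonad}, exploiting the tensor-product structure $\mathfrak{so}_n(M_N(\C))/\mathpzc{S}_{\C,N} = M_N(\C)\otimes_\C (\mathfrak{so}_n(\C)/\mathpzc{S}_{\C,1})$ recorded before Proposition~\ref{knlknada}, together with the factorization of $\delta$ through $\mathrm{id}\otimes\iota$ in Remark~\ref{inclusionrem}. Fix $\gamma = a+ib$ with $a=\Re(\gamma)$, $b=\Im(\gamma)$ self-adjoint in $W$, and assume $\|b\|_{W/\R a} < 1$; concretely this means there is $s\in \R$ with $\|b - sa\|_W < 1$, i.e.\ after replacing $\gamma$ by $\gamma - sa$ (which does not change the real line $\R\Re(\gamma)$, hence not the property we want to prove) we may assume $\|\Im(\gamma)\|_W < 1$ outright. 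So it suffices to show: if $\|b\|_W<1$ then $a+ib\in\mathcal E(\mathfrak g,N)$, i.e.\ $\pi(D_{a+ib})$ is injective in every non-trivial irreducible unitary representation $\pi$ of $G$.

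The key step is a positivity/coercivity estimate at the level of a single representation $\pi$. For $u\in C^\infty(\pi)$ in the domain, write $\langle \pi(D_{a+ib})u,u\rangle = \langle \pi(\sum X_j^2)u,u\rangle + \langle \pi(\sum a_l Y_l)u,u\rangle + i\langle \pi(\sum b_l Y_l)u,u\rangle$. The first term is $-\sum_j\|\pi(X_j)u\|^2 \le 0$; the second term, since $a$ is self-adjoint, is real (it is a self-adjoint operator in the represented symbol); the third is $i$ times a real quantity. The content of the scalar Rothschild--Stein theorem, suitably transcribed, is a sharp Gårding-type inequality: for the operator $\sum X_j^2 + \sum a_l Y_l$ one has the bound $|\langle \pi(\sum b_l Y_l)u,u\rangle| \le \|b\|_W\big( -\langle\pi(\sum X_j^2)u,u\rangle + \langle \pi(\sum a_lY_l)u,u\rangle \big) + \text{(lower order)}$ uniformly in $\pi$. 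Concretely, I would use the commutator description \eqref{blabla}: $\sum_l b_l Y_l = \sum_{k,l}\delta(b)_{k,l}[X_k,X_l]$ with $\delta(b)\in\mathfrak{so}_n(M_N(\C))$ of operator norm $\|b\|_W$ (by Remark~\ref{inclusionrem} the map $\delta$ is isometric for the norm in question), so that $\pi(\sum b_lY_l) = \sum_{k,l}\delta(b)_{k,l}[\pi(X_k),\pi(X_l)]$, and then control $|\langle \sum_{k,l}\delta(b)_{k,l}[\pi(X_k),\pi(X_l)]u,u\rangle|$ by $\|\delta(b)\|_{\mathrm{op}}\cdot \sum_j\|\pi(X_j)u\|^2$ via an antisymmetry-plus-Cauchy--Schwarz argument identical to the one in \cite{rothstein}, \cite{helffercomp} but carried out with $M_N(\C)$-valued coefficients acting on $C^\infty(\pi)\otimes\C^N$ (or rather on the appropriate $N$-fold amplification). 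Injectivity of $\pi(D_{a+ib})$ then follows: if $\pi(D_{a+ib})u=0$ then taking the pairing with $u$ and comparing real and imaginary parts forces $\sum_j\|\pi(X_j)u\|^2 = 0$ and $\langle\pi(\sum a_lY_l)u,u\rangle = 0$, so $\pi(X_j)u=0$ for all $j$; since $\mathfrak g_{-1}$ generates $\mathfrak g$, this propagates to $\pi(Z)u=0$ for all $Z\in\mathfrak g$, and by irreducibility and non-triviality of $\pi$ this is impossible unless $u=0$.

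The main obstacle I anticipate is making the uniform Gårding inequality precise and genuinely uniform over all irreducible $\pi$ in the matrix-coefficient setting — i.e.\ verifying that the constant $\|b\|_W$ coming out of the scalar estimate is unchanged when one tensors with $M_N(\C)$, and that no representation-dependent error term spoils the strict inequality. The cleanest route is probably not to re-derive the estimate but to invoke the scalar result as a black box applied to a larger algebra: consider the auxiliary graded nilpotent Lie algebra $\mathfrak g\otimes\C^N$-type construction, or more simply observe that injectivity of $\pi(D_\gamma)$ on $C^\infty(\pi)^N$ for the $M_N(\C)$-coefficient operator can be tested by pairing with vectors and reduces, via the spectral decomposition of the self-adjoint matrix $\xi(a)$ plus the block structure, to the scalar inequality applied coefficientwise. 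A subtlety worth flagging in the write-up: unlike the scalar case, the inclusion here is not expected to be an equality (the reverse inclusion and the role of property $\alpha$ are exactly what the companion theorem and later sections address), so one only needs the "if" direction and the argument is strictly softer than the full Rothschild--Stein analysis. I would structure the proof as: (i) reduce to $\|\Im(\gamma)\|_W<1$ by the shift in $\R\Re(\gamma)$; (ii) fix $\pi$ and write out $\langle\pi(D_\gamma)u,u\rangle$; (iii) estimate the imaginary part using \eqref{blabla}, Remark~\ref{inclusionrem}, and antisymmetry; (iv) conclude injectivity via the generation property of $\mathfrak g_{-1}$ and irreducibility.
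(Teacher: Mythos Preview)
Your overall strategy---pair with $u$, split into real and imaginary parts, bound the $b$-contribution by $\|b\|_W\sum_j\|\pi(X_j)u\|^2$ via the commutator description and antisymmetry, then use that $\mathfrak{g}_{-1}$ generates---is exactly the paper's argument, transposed from a global maximal estimate on $G$ to injectivity in each $\pi$. The commutator/trace-norm bound you describe is precisely the paper's estimate \eqref{imlest}.

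However, your reduction step (i) is wrong as written. Subtracting a \emph{real} multiple of $a=\Re(\gamma)$ from $\gamma$ gives $\gamma-sa=(1-s)a+ib$, which leaves $\Im(\gamma)=b$ unchanged; it does not achieve $\|\Im(\gamma)\|_W<1$. If instead you meant to shift the imaginary part, i.e.\ pass from $\gamma=a+ib$ to $\gamma'=a+i(b+ta)$, then the operator $D_{\gamma'}$ is genuinely different from $D_\gamma$ and there is no obvious reason hypoellipticity is preserved under this change. So the reduction cannot be made upfront.

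The fix is to keep $\gamma$ fixed and use \emph{both} parts of the equation $\langle\pi(D_\gamma)u,u\rangle=0$. Since $\pi(Y_l)\otimes a_l$ is skew-adjoint, the imaginary part of the pairing gives $\sum_l\langle u,(\pi(Y_l)\otimes a_l)u\rangle=0$. This lets you add $t$ times the vanishing $a$-term to the $b$-term in the real part for free, so the commutator bound applies with constant $\|b+ta\|_W=\|b\|_{W/\R a}<1$ rather than $\|b\|_W$. From there your step (iv) goes through. The paper does exactly this manoeuvre, packaged as the algebraic identity
\[
\langle f,D_\gamma f\rangle=\sum_j\|X_jf\|^2+(1-it)\sum_l\langle f,a_lY_lf\rangle+i\sum_l\langle f,(b_l+ta_l)Y_lf\rangle,
\]
and then bounds the middle term by $|\langle f,D_\gamma f\rangle|$ via \eqref{realest}. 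Once you replace your step (i) by this observation, your write-up and the paper's proof coincide.
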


Our argument follows that of \cite[Theorem 1]{rothstein} closely.

\begin{proof}
Using the Rockland condition (or standard techniques) it suffices to show that if $ \|\Im(\gamma)\|_{W/\R\Re(\gamma))}<1$, there is a constant $A>0$ such that
\begin{equation}
\label{lknlknlknad}
\sum_{j=1}^n \|X_j f\|^2_{L^2(G,\C^N)}\leq A|\langle f,D_\gamma f\rangle|, \quad \forall f\in C^\infty_c(G,\C^N).
\end{equation}

For notational simplicity we write $\gamma=a+ib$ where $a\equiv \Re(\gamma),b\equiv \Im(\gamma)\in M_N(\C)^m$ are self-adjoint. By the same argument as in \cite[Lemma 2.7]{rothstein}, there is a $t\in \R$ such that $\|b+ta\|_{W}=\|b\|_{W/\R a}$.

We can write 
\begin{align*}
\langle f, D_\gamma f\rangle=&\sum_{j=1}^n \|X_j f\|^2_{L^2(G,\C^N)}+\sum_{l=1}^m \left[\langle f, a_lY_lf\rangle+i\langle f, b_lY_lf\rangle\right]=\\
=&\sum_{j=1}^n \|X_j f\|^2_{L^2(G,\C^N)}+(1-it)\sum_{l=1}^m\langle f, a_lY_lf\rangle+i\sum_{l=1}^m\langle f, (b_l+ta_l)Y_lf\rangle
\end{align*}
Rearranging these terms and using the triangle equality give the estimate
\begin{equation}
\label{tolest}
\sum_{j=1}^n \|X_j f\|^2_{L^2(G,\C^N)}\leq |\langle f,D_\gamma f\rangle|+|1-it|\left|\sum_{l=1}^m\langle f, a_lY_lf\rangle\right|+\left|\sum_{l=1}^m\langle f, (b_l+ta_l)Y_lf\rangle\right|.
\end{equation}

To estimate the right hand side, we make the following observations. We have that 
$$\Im(\langle f, D_\gamma f\rangle)=\sum_{l=1}^m\langle f, a_lY_lf\rangle,$$
and therefore
\begin{equation}
\label{realest}
\left|\sum_{l=1}^m\langle f, a_lY_lf\rangle\right|\leq |\langle f,D_\gamma f\rangle|.
\end{equation}
Moreover, using Proposition \ref{knlknada} we write 
$$\sum_{l=1}^m\langle f, (b_l+ta_l)Y_lf\rangle=\sum_{k,l=1}^{n}\langle f,\delta(b+ta)_{k,l} [X_k,X_l]f\rangle=\mathrm{Tr}(\delta(b+ta)\rho),$$
where 
$$\rho=\int_G ([X_k,X_l]f)(g)\otimes f(g)^*\mathrm{d} g\in \mathfrak{so}_n(M_N(\C)).$$
Following the same argument as in \cite[Lemma 2.7]{rothstein}, the trace norm of $\rho$ satisfies the estimate 
$$\|\rho\|_{\mathcal{L}^1}\leq \sum_{j=1}^n \|X_j f\|^2_{L^2(G,\C^N)}.$$
We therefore have the upper bound
\begin{align}
\label{imlest}
\left|\sum_{l=1}^m\langle f, (b_l+ta_l)Y_lf\rangle\right|=&|\mathrm{Tr}(\delta(b+ta)\rho)|\leq \\
\nonumber
\leq &\|\delta(b+ta)\|_{M_n(M_N(\C))}\sum_{j=1}^n \|X_j f\|^2_{L^2(G,\C^N)}=\|b\|_{W/\R a}\sum_{j=1}^n \|X_j f\|^2_{L^2(G,\C^N)}.
\end{align}
If we rearrange the estimate \eqref{tolest} using the estimates \eqref{realest} and \eqref{imlest} we arrive at the desired estimate \eqref{lknlknlknad} for the constant 
$$A=\frac{1+|1+it|}{1-\|\Im(\gamma)\|_{W/\R\Re(\gamma))}}.$$
\end{proof}

We now turn to partial converses of Theorem \ref{lnonadhighN}. Given a graded nilpotent Lie algebra $\mathfrak{g}=\oplus_{j=1}^r\mathfrak{g}_{-j}$ the truncated Lie algebra $\mathfrak{g}_{-1}\oplus \mathfrak{g}_{-2}$ of nilpotency degree $2$ is defined by declaring $\mathfrak{g}_{-2}$ central and the Lie bracket $\mathfrak{g}_{-1}\wedge \mathfrak{g}_{-1}\to \mathfrak{g}_{-2}$ defined from that on $\mathfrak{g}$. We note that if $\mathfrak{g}_{-1}$ generates $\mathfrak{g}$, then $\mathfrak{g}_{-1}$ generates the truncated Lie algebra $\mathfrak{g}_{-1}\oplus \mathfrak{g}_{-2}$ and the center of $\mathfrak{g}_{-1}\oplus \mathfrak{g}_{-2}$ is $\mathfrak{g}_{-2}$. In particular, a flat coadjoint orbit for $\mathfrak{g}_{-1}\oplus \mathfrak{g}_{-2}$ corresponds to a coadjoint equivalence class of $\xi\in \mathfrak{g}_{-2}^*$ such that the Kirillov form $\omega_\xi(X,Y):=\xi[X,Y]$ is non-degenerate on $\mathfrak{g}_{-1}$.

\begin{thm}
\label{spechighN}
Let $\mathfrak{g}$ be a graded nilpotent Lie algebra generated by $\mathfrak{g}_{-1}$. If the Lie algebra $\mathfrak{g}_{-1}\oplus \mathfrak{g}_{-2}$ truncated to nilpotency degree 2 does not admit any flat coadjoint orbits, then
$$\mathcal{E}(\mathfrak{g},N)\subseteq \{\gamma\in M_N(\C)^m: \mathrm{Spec}(\gamma(\xi))\cap H(1)=\emptyset \quad \forall \xi\in S(\mathfrak{g}_{-2}^*)\}.$$
\end{thm}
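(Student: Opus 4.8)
The plan is to deduce Theorem \ref{spechighN} from the assumption that $\mathfrak{g}_{-1}\oplus\mathfrak{g}_{-2}$ has no flat coadjoint orbits, together with a reduction to the polycontact situation handled by Proposition \ref{gener}, or failing that, a direct spectral analysis. First I would observe that by Proposition \ref{lknjnada}, or rather the equality \eqref{diendiend}, membership $\gamma\in\mathcal{E}(\mathfrak{g},N)$ is controlled by the representations coming from the truncated Lie algebra $\mathfrak{q}:=\mathfrak{g}_{-1}\oplus\mathfrak{g}_{-2}$: indeed the quotient map $\mathfrak{g}\to\mathfrak{q}$ is graded and surjective and induces the identity on $\mathfrak{g}_{-1}$, so $\mathcal{E}(\mathfrak{g},N)\subseteq\mathcal{E}_\Phi(\mathfrak{g},N)=\{\gamma:\phi_*(\gamma)\in\mathcal{E}(\mathfrak{q},N)\}$ for $\Phi$ the pullback of $\widehat{Q}$, and since $\phi_*$ is essentially the identity on the relevant data it suffices to prove the statement for $\mathfrak{q}$ itself. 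So from now on I would assume $\mathfrak{g}$ is two-step.

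Next I would fix a unit vector $\xi\in\mathfrak{g}_{-2}^*$ and analyze the irreducible representation(s) of $G$ attached to the coadjoint orbit of $\xi$. Because $\mathfrak{g}_{-1}\oplus\mathfrak{g}_{-2}$ admits no flat coadjoint orbits, the Kirillov form $\omega_\xi$ is degenerate on $\mathfrak{g}_{-1}$ for every $\xi$; write $\mathfrak{r}_\xi:=\ker\omega_\xi\subseteq\mathfrak{g}_{-1}$ for its radical, which is then nonzero, and split $\mathfrak{g}_{-1}=\mathfrak{r}_\xi\oplus\mathfrak{h}_\xi$ with $\omega_\xi$ symplectic on $\mathfrak{h}_\xi$ of dimension $2p_\xi$. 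The irreducible representation $\pi_\xi$ associated to $\xi$ then acts on $L^2(\R^{p_\xi})$ and the sub-Laplacian part $\sum X_j^2$ represents, after choosing an adapted basis, as a harmonic oscillator on the symplectic directions $\mathfrak{h}_\xi$ plus a genuine Laplacian (second-order elliptic operator with constant coefficients, i.e. $-\Delta$) in the radical directions $\mathfrak{r}_\xi$, which supplies a strictly positive continuous spectrum $[0,\infty)$; meanwhile $\sum_l\gamma_l Y_l$ becomes the constant matrix $\gamma(\xi)=i\sum_l\xi_l\gamma_l$ acting on the $\C^N$-factor. Hence $\pi_\xi(D_\gamma)=\big(|\eta|^2+|\xi|(2|k|+\text{const})\big)\otimes I_N+\gamma(\xi)$ where $\eta$ ranges over $\mathfrak{r}_\xi^*$ and $k$ over the oscillator levels, and the key point is that the scalar prefactor sweeps out the whole ray $[c,\infty)$ for some $c>0$ as $\eta,k$ vary — in fact because of the radical direction it sweeps $[0,\infty)$ if we also scale $\xi$, but at unit length it covers $[p_\xi,\infty)$ or better. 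Injectivity of $\pi_\xi(D_\gamma)$ therefore forces $-\mathrm{Spec}(\gamma(\xi))$ to avoid $[c_\xi,\infty)$ for the appropriate $c_\xi$; running this over $\pm\xi$ gives avoidance of $H(c_\xi)$. The claimed constant $1$ in $H(1)$ should come from the normalization: at unit length $\xi$ the continuous spectrum from $\mathfrak{r}_\xi$ already starts at $0$, so combined with the oscillator the reachable set of eigenvalue-shifts is exactly $[0,\infty)$ scaled appropriately — I would need to check that the presence of at least one radical direction pushes the threshold down to meet $H(1)$ rather than $H(p_\xi)$, most cleanly by scaling $\xi$ inside the cone and using conicity of $\mathcal{E}$.

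The main obstacle I anticipate is pinning down the exact constant. Proposition \ref{gener} produces a threshold $p=\dim(\mathfrak{p}_{-1})/2$ from a polycontact quotient, which is generally larger than $1$; to get the sharp $H(1)$ one really wants to exploit the radical $\mathfrak{r}_\xi$: because $\omega_\xi$ is degenerate there is a direction in $\mathfrak{g}_{-1}$ represented by an honest derivative rather than an oscillator generator, and the spectrum of $-\partial^2$ on $\R$ being $[0,\infty)$ means the scalar operator $\pi_\xi(\sum X_j^2)$ has spectrum $[0,\infty)$ with no gap — so $\pi_\xi(D_\gamma)$ is injective iff $0\notin\overline{[0,\infty)+\mathrm{Spec}(\gamma(\xi))}$, i.e. iff $\mathrm{Spec}(\gamma(\xi))\cap(-\infty,0]$... but this only gives a half-line condition, not $H(1)$. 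The resolution must be that one additionally scales $\xi\mapsto s\xi$ for $s\in(0,1]$ staying in the unit ball, uses that $\mathcal{E}(\mathfrak{g},N)$ is conical-in-$\gamma$ hence the scaled spectral condition, and the oscillator levels $|\xi|(2k+1)$ at $|\xi|=1$ with $k=0$ contribute exactly $1$; carefully combining the discrete oscillator spectrum $\{1,3,5,\dots\}$ (or $\{p,p+2,\dots\}$) with the continuous $[0,\infty)$ from the radical yields the union $[1,\infty)$ — wait, one must verify $\mathfrak{h}_\xi\neq 0$, i.e. $\omega_\xi$ is not identically zero, which holds since $\mathfrak{g}_{-1}$ generates $\mathfrak{g}$ and $\xi\neq0$. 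So the precise statement is: with $0\in\mathrm{Spec}$ from the radical and the lowest oscillator eigenvalue equal to (half the number of symplectic pairs, but after normalizing the inner product on $\mathfrak{g}_{-2}$ via $\iota$, exactly) $1$ on each symplectic pair summing to $p_\xi\geq 1$, the continuous plus discrete spectrum is $[p_\xi,\infty)\cup[0,\infty)$-shifted... I would resolve this by noting $[0,\infty)+\{\text{oscillator spectrum}\}=[p_\xi,\infty)$ is wrong; rather $[0,\infty)$ alone already forces the half-line obstruction and the full $H(1)$ emerges only after also representing via the radical's own contribution under scaling — the honest fix, which I would write out, is to argue that for $\xi$ of unit length the essential spectrum of $\pi_\xi(\sum X_j^2)$ is $[p_\xi,\infty)$ when $\mathfrak{r}_\xi=0$ but $[0,\infty)$ when $\mathfrak{r}_\xi\neq0$, hence degeneracy of every $\omega_\xi$ (no flat orbits) is exactly what upgrades the conclusion from $H(p)$ (Proposition \ref{gener}) to $H(1)$, and indeed to the half-line, so the stated $H(1)$ is a (possibly non-sharp but correct) consequence. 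I would therefore structure the proof as: (i) reduce to two-step via \eqref{diendiend}; (ii) for each unit $\xi$, decompose $\mathfrak{g}_{-1}=\mathfrak{r}_\xi\oplus\mathfrak{h}_\xi$ and write down $\pi_\xi(D_\gamma)$ explicitly; (iii) compute its spectrum as (continuous elliptic part $[0,\infty)$) $\oplus$ (oscillator) $\otimes I_N+\gamma(\xi)$; (iv) conclude injectivity fails whenever $\mathrm{Spec}(\gamma(\xi))$ meets $H(1)$, using conicity of $\mathcal{E}(\mathfrak{g},N)$ under the dilations to reach every point of $H(1)$ from the combined spectrum.
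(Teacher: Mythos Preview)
Your strategy---argue by contrapositive, fix a unit $\xi$ at which $\gamma(\xi)$ has a real eigenvalue $\mu$ with $|\mu|\geq 1$, take an eigenvector $v\in\C^N$, and produce an irreducible $\pi$ together with a smooth vector $H$ such that $\pi(D_\gamma)(H\otimes v)=0$---is exactly what the paper does, except that the paper compresses the last step into a one-line citation of \cite[Proof of Theorem~2, Section~4]{rothstein}. Your reduction to the two-step truncation via \eqref{diendiend} is valid and is implicitly present in the cited argument as well (it guarantees that $\pi(Y_l)=i\xi_l$ are scalars, which is what makes the tensor-with-$v$ trick work).

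Your representation-theoretic description contains an imprecision you only partially correct. In a \emph{single} irreducible $\pi_{\xi,\eta_0}$ over $\xi\in\mathfrak{g}_{-2}^*$ the radical directions $X\in\mathfrak{r}_\xi$ act as the scalar $i\eta_0(X)$, not as derivatives; there is no Laplacian inside one irreducible, and the functional dimension is only $p_\xi$. What you are really describing is the \emph{family} of irreducibles indexed by $\eta_0\in\mathfrak{r}_\xi^*$, in which $\pi_{\xi,\eta_0}(\sum_j X_j^2)=-|\eta_0|^2-H_\xi$ for a harmonic oscillator $H_\xi$ on $L^2(\R^{p_\xi})$. Ranging over $\eta_0$ then gives non-injectivity of some $\pi_{\xi,\eta_0}(D_\gamma)$ exactly when the eigenvalue $\mu$ lies in $[\lambda_1(\xi),\infty)$, where $\lambda_1(\xi)$ is the ground-state energy of $H_\xi$; this is precisely the mechanism in \cite{rothstein} and is the content of the paper's Proposition~\ref{propc}.

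The step you never complete is identifying the threshold as $1$. Your text cycles through $1$, $p_\xi$, a half-line starting at $0$, and $[p_\xi,\infty)$ without settling anything, and the appeals to ``scaling $\xi$'' or ``conicity'' cannot help here because both the eigenvalue $\mu$ of $\gamma(\xi)$ and $\lambda_1(\xi)$ are homogeneous of the same degree in $\xi$. The paper does not attempt this computation either: it simply invokes Rothschild--Stein, whose construction (with the paper's normalization of the inner product on $\mathfrak{g}_{-2}$) already delivers the required pair $(\pi,H)$. If you want a self-contained argument along your lines you must actually work out the relationship between $\lambda_1(\xi)$ and $|\xi|$ under the paper's norm conventions---that is the content of the black-boxed citation, and your proposal stops short of it.
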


\begin{proof}
We prove that if $\gamma\in M_N(\C)^m$ satisfies that $\gamma(\xi)$ has an eigenvalue $\leq -1$ for some $\xi\in S(\mathfrak{g}_{-2}^*)$ then $D_\gamma$ is not hypoelliptic. Indeed, if this is the case, we take such a $\xi$ and the eigenvector $v$ of $\gamma(\xi)$ with eigenvalue $\leq -1$. Since $\mathfrak{g}_{-1}\oplus \mathfrak{g}_{-2}$ admits no flat coadjoint orbits, $\xi\in \mathfrak{g}^*_{-2}$ defines a degenerate Kirillov form on $\mathfrak{g}_{-1}$ and the argument in \cite[Proof of Theorem 2, Section 4]{rothstein} produces a non-trivial irreducible unitary representation $(\pi,\mathcal{H}_\pi)$ of $G$ and a vector $H\in \mathcal{H}_\pi$ such that $\pi(D_{\gamma})(H\otimes v)=0$. In particular, $D_{\gamma}$ is not hypoelliptic. 
\end{proof}

\begin{definition}
Let $\gamma\in \mathfrak{g}_{-2}\otimes M_N(\C)$ be written as $\gamma=a+ib$ for $a$ and $b$ self-adjoint. We say that $\gamma$ has {\bf property $\alpha$} if for any  unit vectors $\xi\in \mathfrak{g}_{-2}^*$ and $v\in \ker(\xi(a))$ we have that $|\langle v, \xi(b)v\rangle_{\C^N}|<1$. 
\end{definition}

\begin{thm}
\label{chadhighN}
Let $\mathfrak{g}$ be a graded nilpotent Lie algebra generated by $\mathfrak{g}_{-1}$. If the Lie algebra $\mathfrak{g}_{-1}\oplus \mathfrak{g}_{-2}$ truncated to nilpotency degree 2 does not admit any flat coadjoint orbits, then
$$\mathcal{E}(\mathfrak{g},N)\subseteq \{\gamma\in M_N(\C)^m:\; \gamma\; \mbox{has property $\alpha$}\}.$$
\end{thm}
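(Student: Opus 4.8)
The plan is to reduce the statement to Theorem \ref{spechighN} by a perturbation argument, exactly in the spirit of how \cite[Theorem 2]{rothstein} extracts the pointwise spectral condition. Suppose $\gamma = a + ib$ fails property $\alpha$: there exist unit vectors $\xi \in \mathfrak{g}_{-2}^*$ and $v \in \ker(\xi(a))$ with $|\langle v, \xi(b)v\rangle_{\C^N}| \geq 1$; by replacing $\xi$ with $-\xi$ if necessary we may assume $\langle v, \xi(b)v\rangle \leq -1$. I would first record the analytic input: since $\mathfrak{g}_{-1}\oplus\mathfrak{g}_{-2}$ admits no flat coadjoint orbits, the Kirillov form $\omega_\xi$ is degenerate on $\mathfrak{g}_{-1}$, so the construction in \cite[Proof of Theorem 2, Section 4]{rothstein} applies to $\xi$. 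The key point of that construction is that it produces, for the degenerate direction $\xi$, a one-parameter family of non-trivial irreducible unitary representations $\pi_s$ of $G$ together with unit vectors $H_s \in \mathcal{H}_{\pi_s}$ such that $\langle H_s, \pi_s(\sum_j X_j^2) H_s\rangle \to 0$ and $\pi_s$ "concentrates" at $\xi$ in the sense that $\langle H_s, \pi_s(Y_l) H_s\rangle \to \xi_l$ (in the orthonormal basis $Y_1,\dots,Y_m$), where $\xi = (\xi_1,\dots,\xi_m)$.

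With this in hand, I would test hypoellipticity against the vectors $H_s \otimes v \in \mathcal{H}_{\pi_s}\otimes \C^N$. Compute
\begin{align*}
\langle H_s\otimes v, \pi_s(D_\gamma)(H_s\otimes v)\rangle
&= \Big\langle H_s, \pi_s\big(\textstyle\sum_j X_j^2\big) H_s\Big\rangle
+ \sum_{l=1}^m \langle H_s, \pi_s(Y_l) H_s\rangle\,\langle v, \gamma_l v\rangle.
\end{align*}
As $s\to\infty$ the first term tends to $0$ and the second term tends to $\sum_l \xi_l \langle v,\gamma_l v\rangle = \langle v, \xi(\gamma) v\rangle = \langle v, \xi(a)v\rangle + i\langle v,\xi(b)v\rangle = i\langle v,\xi(b)v\rangle$, using $v\in\ker\xi(a)$. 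Thus the real part of the quadratic form goes to $0$. That is not yet a contradiction with hypoellipticity (which only asks injectivity, i.e. a lower bound), so—just as in \cite{rothstein}—I would instead use the sharper fact that the representations $\pi_s$ can be arranged to be built on a Bargmann–Fock / harmonic-oscillator model along the non-degenerate complement of $\ker\omega_\xi$, on which $\pi_s(\sum_j X_j^2) \geq c_s \cdot \mathrm{Id}$ with a \emph{computable} spectral gap, while the genuinely flat direction contributes a continuum $[0,\infty)$. Testing against the bottom of the oscillator spectrum tensored with $v$, and against a suitable approximate eigenvector in the flat direction, gives $\pi_s(D_\gamma)(H_s\otimes v)$ with $\|\pi_s(D_\gamma)(H_s\otimes v)\| \to 0$, and then a standard weak-$*$ compactness / direct-integral argument (again following \cite[Section 4]{rothstein}) produces an honest irreducible $\pi$ and a genuine null vector, contradicting the Rockland condition. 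This shows $\gamma \notin \mathcal{E}(\mathfrak{g},N)$, which is the contrapositive of the claim.

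The main obstacle is the last step: passing from the family of approximate null vectors $H_s\otimes v$ (which only shows the quadratic form $\langle\cdot,\pi_s(D_\gamma)\cdot\rangle$ is not uniformly bounded below) to an actual non-injective represented operator. In \cite{rothstein} this is handled precisely by the representation-theoretic analysis of the degenerate Kirillov form in Section 4 there—splitting $\mathfrak{g}_{-1}$ into $\ker\omega_\xi$ and a symplectic complement, modelling $\pi$ on $L^2$ of a Lagrangian, and showing the flat direction lets one push the oscillator ground-state energy to the eigenvalue $-\langle v,\xi(b)v\rangle \le -1$ of $\xi(b)$ on the line $\C v$. The only new ingredient over the scalar case $N=1$ is that the scalar eigenvalue of $i\xi(b)$ is replaced by the diagonal matrix element $\langle v,\xi(b)v\rangle$ along a fixed vector $v\in\C^N$, which commutes with everything in the $\mathcal{H}_\pi$ factor; so the $N>1$ argument is genuinely the $N=1$ argument tensored with the rank-one projection onto $\C v$, and no new analytic difficulty arises. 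I would therefore present the proof as: reduce to $\xi$ degenerate (no flat orbits), invoke \cite[Section 4]{rothstein} with the vector $v$ carried along as a passive tensor factor, and conclude non-hypoellipticity whenever $|\langle v,\xi(b)v\rangle|\ge 1$.
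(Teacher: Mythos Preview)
Your final paragraph lands on essentially the paper's argument: assume property~$\alpha$ fails, pick unit $\xi$ and $v\in\ker\xi(a)$ with $|\langle v,\xi(b)v\rangle|\ge 1$, use that $\omega_\xi$ is degenerate (no flat orbits), and invoke the construction of \cite[Section~4]{rothstein} tensored with $v$ to exhibit a representation in which $\pi(D_\gamma)$ has nontrivial kernel. The paper is more direct still: it simply notes that this case is already covered by Theorem~\ref{spechighN}, and then reiterates the Rothschild--Stein construction with the specific vector $v_0$.

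Your detour through a family $(\pi_s,H_s)$ and approximate null vectors is both unnecessary and not correct as written. It is unnecessary because when $\omega_\xi$ is degenerate the Rothschild--Stein construction already furnishes, for a suitable value of the continuous parameter coming from the flat direction, a \emph{single} irreducible $\pi$ and a vector $H$ with $\pi(D_\gamma)(H\otimes v)=0$ exactly; no limiting argument is needed, and the paper uses none. It is not correct because your claim $\|\pi_s(D_\gamma)(H_s\otimes v)\|\to 0$ does not follow: with $\pi_s(Y_l)$ scalar and $H_s$ an eigenvector of $\pi_s(\sum_j X_j^2)$ one gets $\pi_s(D_\gamma)(H_s\otimes v)=H_s\otimes(\mu_s v+\gamma(\xi)v)$ for some real $\mu_s$, and since $v$ only satisfies $\xi(a)v=0$ and $\langle v,\xi(b)v\rangle\le -1$, the vector $\gamma(\xi)v=-\xi(b)v$ need not be a scalar multiple of $v$; its component orthogonal to $v$ has fixed positive norm independent of $s$, so $\|\pi_s(D_\gamma)(H_s\otimes v)\|$ cannot tend to $0$. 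This same issue makes your ``passive tensor factor'' reduction imprecise: for the exact null-vector argument one actually needs $v$ to be an eigenvector, which is why the paper routes the statement through the spectral condition of Theorem~\ref{spechighN} rather than treating $v$ as inert.
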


By Theorem \ref{lnonadhighN} it suffices to show that if $\gamma$ does not have property $\alpha$, then $D_\gamma$ is not hypoelliptic. In fact, this can be seen immediately from Theorem \ref{spechighN} since if $\gamma$ does not have property $\alpha$ then $\mathrm{Spec}(\gamma(\xi))\cap H(1)\neq\emptyset$ for some unit vector $\xi\in \mathfrak{g}_{-2}^*$. We will however take the chance to expand the argument and in parallel showcase the difference to the converse inclusion of Theorem \ref{lnonadhighN}. Our argument follows that of \cite[Theorem 2]{rothstein} closely.

For notational simplicity we write $\gamma=a+ib$ where $a\equiv \Re(\gamma),b\equiv \Im(\gamma)\in M_N(\C)^m$ are self-adjoint. We note that $\|b\|_{W/\R a}$ is the maximal value of $\rho\mapsto |\mathrm{Tr}(\rho b)|$ where $\rho\in M_N(\C)^m$ ranges over the trace norm sphere $\|\rho\|_{\mathcal{L}^1}=1$ and satisfying $\mathrm{Tr}(\rho a)=0$. So by convexity of $\rho\mapsto |\mathrm{Tr}(\rho b)|$ the maximum is attained in an extremal point $\xi\otimes e$ for $\xi\in \mathfrak{g}_{-2}^*$ of trace norm $1$ and $e\in M_N(\C)^*$ a vector state. We shall assume that $\mathrm{Tr}(\rho b)=-\|b\|_{W/\R a}\leq -1$. Since $e$ is a vector state, the fact that $\mathrm{Tr}_{M_N(\C)^m}(\rho b)=\mathrm{Tr}_{M_N}(e \xi(b))\leq -1$ and the min-max principle implies that the self-adjoint $\xi(b)\in M_N(\C)$ has an eigenvector $v$ with eigenvalue $\leq -1$.

Since $\mathfrak{g}_{-1}\oplus \mathfrak{g}_{-2}$ admits no flat coadjoint orbits, $\xi\in \mathfrak{g}^*_{-2}$ defines a degenerate Kirillov form on $\mathfrak{g}_{-1}$ and the argument in \cite[Proof of Theorem 2, Section 4]{rothstein} produces an irreducible unitary representation $(\pi,\mathcal{H}_\pi)$ of $G$ and a vector $H\in \mathcal{H}_\pi$ such that $\pi(D_{ib})(H\otimes v)=0$. In particular, $D_{ib}$ is not hypoelliptic. If we in addition assume that $\gamma$ does not have property $\alpha$, we can assume that the vector state $e\in M_N(\C)$ is of the form $e=v_0\otimes v_0^*$ where $\xi(a)v_0=0$. As such, the argument used in the proof of Theorem \ref{spechighN} coming from \cite[Theorem 2]{rothstein} carries over and disproves hypoellipticity of $D_\gamma$.

\begin{question}
In the statement of Theorem \ref{lnonadhighN}, can the property $\|\Im(\gamma)\|_{W/\R\Re(\gamma))}<1$ be determined from pointwise spectral properties of the first degree polynomial $\gamma(\xi)$ on $\mathfrak{g}_{-2}^*$ similar to the condition in Theorem \ref{spechighN}?
\end{question}

\section{Examples}
\label{ljnljnad}

To describe the irreducible, unitary representations we use the Kirillov orbit method: $\widehat{G}=\mathfrak{g}^*/\mathsf{Ad}^*$. In all examples we focus on describing the situation in a set of generic orbits $\Gamma\subseteq \widehat{G}$ that will be an open, dense, Hausdorff subset. In all cases, hypoellipticy can be characterized from a ``good enough'' invertibility argument in the generic orbits.

We first recall the notion of flat orbits. Writing $\mathfrak{z}$ for the center of $\mathfrak{g}$, a $\xi\in \mathfrak{g}^*$ belongs to a flat coadjoint orbit if the induced Kirillov form $\omega_\xi(X,Y):=\xi([X,Y])$ is non-degenerate on $\mathfrak{g}/\mathfrak{z}$. If a flat orbit exists, we take the set of generic orbits $\Gamma\subseteq \widehat{G}$ as the set corresponding to all flat orbits; if it is non-empty it is a Zariski-open, dense, Hausdorff subset that can be identified with the open subset
$$\Gamma=\{\xi\in \mathfrak{z}^*: \mathrm{Pf}(\omega_\xi)\neq 0\}.$$
Here $\mathrm{Pf}$ denotes the Pfaffian, so the polynomial $\xi\mapsto \mathrm{Pf}(\omega_\xi)$ takes a non-zero value at $\xi$ if and only if $\omega_\xi$ is non-degenerate.

\subsection{Step 2 with flat orbits}

Consider the case when $\mathfrak{g}$ is step $2$, and admits a flat orbit. Consider the trivial vector bundle $V:=\Gamma\times \mathfrak{g}/\mathfrak{z}$ over $\Gamma$ and chose a metric thereon. The Kirillov form induces a symplectic form on $V$. The metric and the Kirillov form induces a canonical complex structure on $V$,  which is adapted to a potentially different metric. We form the bosonic Fock space bundle 
$$\mathfrak{F}_V:=\bigoplus_{k=0}^\infty V^{\otimes^{\rm sym}_\C k}.$$
This is a Hilbert space bundle over $\Gamma$. Its relevance comes from the fact that the complex structure and the Kirillov orbit method gives $\mathfrak{F}_V$ the structure of a bundle of representations over $\Gamma$. Using the metric, we define the complex linear symmetric operator $g_\omega=|\omega|=\sqrt{-\omega^2}$. We define the endomorphisms 
$$
s_k:V^{\otimes_\C^{\rm sym}k}\to V^{\otimes_\C^{\rm sym}k},
$$
on the $k$:th symmetric tensor power, from 
$$s_k(v_1\otimes_\C^{\rm sym}\cdots \otimes_\C^{\rm sym}v_k):=\sum_{j=1}^kv_1\otimes_\C^{\rm sym}\cdots\otimes_\C^{\rm sym}g_\omega v_j \otimes_\C^{\rm sym}\cdots\otimes_\C^{\rm sym}v_k.$$
 If we use a metric on $V$ making the complex structure defined from the Kirillov form adapted, then $g_\omega$  is the identity operator and $s_k$ is $k$ times the identity operator. We assume that $\mathfrak{g}_{-2}=\mathfrak{z}$ in which case we can identify $\mathfrak{z}$ with a subspace of $\mathfrak{so}(\mathfrak{g}_{-1})$. Recall the following theorem from \cite{goffkuz}.

\begin{thm}
Let $\mathfrak{g}$ be a step $2$ nilpotent Lie group with flat orbits and $\mathfrak{g}_{-2}=\mathfrak{z}$ of dimension $m$. Fix a metric on $\mathfrak{g}_{-1}$ and an orthonormal basis $X_1,\ldots, X_n$. Then $\mathcal{E}(\mathfrak{g},N)$ consists of those $\gamma\in M_N(\C)^m$ such that 
$$\gamma_k:\Gamma\ni \xi\mapsto s_k(\xi)+\frac{\mathrm{Tr}(|\omega_\xi|)}{2}+\gamma(\xi)\in \End(V^{\otimes_\C^{\rm sym}k}\otimes \C^N),
$$
satisfies that $\gamma_k(\xi)$ is invertible for all $\xi\in \Gamma$ and 
$$\sup_{k,\xi\in \Gamma} \left\|\left(s_k(\xi)+\frac{\mathrm{Tr}(|\omega_\xi|)}{2}\right)\left(s_k(\xi)+\frac{\mathrm{Tr}(|\omega_\xi|)}{2}+\gamma(\xi)\right)^{-1}\right\|_{\End(V^{\otimes_\C^{\rm sym}k}\otimes \C^N)}<\infty.$$
\end{thm}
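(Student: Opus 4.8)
The plan is to diagonalize the represented operator $\pi(D_\gamma)$ in each flat orbit representation using the Stone--von Neumann picture, and then reduce hypoellipticity to a uniform invertibility statement over the parameter space $\Gamma$. First I would recall that since $\mathfrak{g}$ is step $2$ with $\mathfrak{g}_{-2}=\mathfrak{z}$, the only irreducible representations that can obstruct hypoellipticity are the flat-orbit ones parametrized by $\xi\in\Gamma$ (the characters contribute $|\xi_1|^2>0$, exactly as in the proof of Proposition \ref{gener}), so $\gamma\in\mathcal{E}(\mathfrak{g},N)$ if and only if $\pi_\xi(D_\gamma)$ is injective for all $\xi\in\Gamma$, with the injectivity being ``uniform'' in the Rockland sense. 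For a flat orbit $\xi$, the Kirillov construction realizes $\pi_\xi$ on the bosonic Fock space $\mathfrak{F}_V$ of the complex vector space $(V,\text{complex structure from }\omega_\xi)$, and under this identification $\sum_j \pi_\xi(X_j)^2$ acts on the $k$-th symmetric power as the scalar $s_k(\xi)+\tfrac{1}{2}\mathrm{Tr}(|\omega_\xi|)$ — this is the standard fact that a sum of squares of the generators of a Heisenberg algebra becomes the (rescaled, anisotropic) harmonic oscillator whose spectrum on the $k$-th Fock level is governed by the eigenvalues of $|\omega_\xi|$. Meanwhile $\sum_l \gamma_l \pi_\xi(Y_l)$ acts by the central character, i.e. by the matrix $\gamma(\xi)=i\sum_l \xi_l\gamma_l$ on the auxiliary $\C^N$ factor, commuting with the Fock grading.

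Next I would assemble these two pieces: on $V^{\otimes^{\mathrm{sym}}_\C k}\otimes\C^N$ the operator $\pi_\xi(D_\gamma)$ acts as $\gamma_k(\xi):=s_k(\xi)+\tfrac12\mathrm{Tr}(|\omega_\xi|)+\gamma(\xi)$, and since $\mathfrak{F}_V$ is the Hilbert space direct sum over $k$, the represented operator $\pi_\xi(D_\gamma)$ is block-diagonal with blocks $\gamma_k(\xi)$. Injectivity of $\pi_\xi(D_\gamma)$ on smooth vectors is then equivalent to invertibility of every finite-dimensional block $\gamma_k(\xi)$. However, mere pointwise injectivity is not enough for hypoellipticity: the Rockland condition (via Nourrigat's maximal estimates, as used in Proposition \ref{somsplpklad}) requires a uniform lower bound, which on the scale of Heisenberg--Sobolev spaces of the correct order translates into the boundedness of the ratio operator $\bigl(s_k(\xi)+\tfrac12\mathrm{Tr}(|\omega_\xi|)\bigr)\gamma_k(\xi)^{-1}$ uniformly in $k$ and $\xi\in\Gamma$. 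The idea is that $s_k(\xi)+\tfrac12\mathrm{Tr}(|\omega_\xi|)$ is (up to the $|\omega_\xi|$-eigenvalue weights) the model for the positive ``order $2$'' part, and $D_\gamma$ is hypoelliptic exactly when $D_\gamma$ dominates this model uniformly; this is the content of the parametrix construction in the Heisenberg calculus of \cite{goffkuz,davehaller}, specialized to the explicit Fock-space representations.

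Concretely, the steps in order: (1) restrict attention to $\Gamma$ and recall from the structure of $\widehat G$ for step-$2$ algebras that non-flat orbits give manifestly injective $\pi(D_\gamma)$; (2) write down the Stone--von Neumann/Fock realization of $\pi_\xi$ and verify $\sum_j\pi_\xi(X_j)^2 = s_k(\xi)+\tfrac12\mathrm{Tr}(|\omega_\xi|)$ on level $k$, using that $g_\omega=|\omega_\xi|=\sqrt{-\omega_\xi^2}$ is the symmetric positive operator governing the anisotropic oscillator (when the metric is adapted, $g_\omega=\mathrm{id}$ and this is literally $(k+\tfrac n2)$ times a scalar, matching the Heisenberg group case in Proposition \ref{gener}); (3) observe $\sum_l\gamma_l\pi_\xi(Y_l)$ is the central scalar $\gamma(\xi)$ acting on $\C^N$; (4) conclude $\pi_\xi(D_\gamma)=\bigoplus_k \gamma_k(\xi)$; (5) invoke \cite{goffkuz} (or re-derive via the Heisenberg parametrix and Nourrigat's uniform maximal estimates) to see that the Rockland condition for $D_\gamma$ is equivalent to pointwise invertibility of each $\gamma_k(\xi)$ together with the uniform bound $\sup_{k,\xi}\bigl\|(s_k(\xi)+\tfrac12\mathrm{Tr}(|\omega_\xi|))\gamma_k(\xi)^{-1}\bigr\|<\infty$. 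The main obstacle I expect is step (5): translating ``injective in all representations'' into the correct \emph{uniform} invertibility condition. Pointwise injectivity of each block is automatic from injectivity in each $\pi_\xi$, but controlling the operator norms of the inverses uniformly over the non-compact parameter space $\Gamma$ (where $|\omega_\xi|$ can degenerate or blow up as $\xi\to\partial\Gamma$ or $\xi\to\infty$) and simultaneously over the infinitely many Fock levels $k$ is exactly where the Rockland condition has bite; this requires knowing that the Heisenberg-calculus parametrix has a symbol that is itself controlled by the model operator $s_k(\xi)+\tfrac12\mathrm{Tr}(|\omega_\xi|)$, which is the substance of the cited theorem from \cite{goffkuz} and which I would quote rather than reprove.
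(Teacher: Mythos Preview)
The paper does not prove this theorem at all: it is introduced with ``Recall the following theorem from \cite{goffkuz}'' and no argument is supplied. Your sketch is therefore already more than the paper offers, and steps (2)--(5) are essentially the right outline of what the cited reference does. However, step (1) contains a real oversight.

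You assert that ``non-flat orbits give manifestly injective $\pi(D_\gamma)$'', pointing to the character computation in Proposition \ref{gener}. That is correct only for the one-dimensional representations, i.e.\ coadjoint orbits through $\mathfrak{g}_{-1}^*$. The theorem, however, does \emph{not} assume $\mathfrak{g}$ is polycontact (the remark immediately following it singles out the polycontact case as special), so there are in general further irreducible representations attached to $\xi\in\mathfrak{z}^*\setminus\{0\}$ for which $\omega_\xi$ is degenerate but nonzero. For such $\xi$ the represented operator is a lower-rank harmonic oscillator tensored with a Euclidean Laplacian in the null directions, shifted by $\gamma(\xi)$; its injectivity genuinely depends on $\gamma$ and is not automatic. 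The reason one may nonetheless characterize $\mathcal{E}(\mathfrak{g},N)$ purely by conditions on the Zariski-open set $\Gamma$ is that $\Gamma$ is dense in $\widehat{G}$ in the Fell topology, the non-generic representations arise as limits of flat-orbit ones, and the \emph{uniform} bound in the statement is precisely what forces the maximal estimates to pass to those limits. In other words, the uniform supremum condition is not merely a technical strengthening of pointwise invertibility on $\Gamma$ --- it is doing the work of controlling the entire boundary $\widehat{G}\setminus\Gamma$. This density-plus-uniformity mechanism is the substantive content behind the citation of \cite{goffkuz}, and your sketch underestimates how much of the argument it carries.
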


\begin{remark}
In the special case that $\mathfrak{g}$ is polycontact, i.e. $\Gamma=\mathfrak{z}^*\setminus \{0\}$, the homogeneity and compactness of the sphere in $\mathfrak{z}^*$ ensure that $\mathcal{E}(\mathfrak{g},N)$ consists of those $\gamma\in \mathfrak{so}_n(M_N(\C))$ such that $\gamma_k$  is invertible for all $\xi\in \Gamma$.

In general, $\Gamma$ is noncompact (e.g. $\Gamma=(\R^\times)^2$ for a product of two Heisenberg groups). Therefore the bound on the inverse is in general not superfluous. 
\end{remark}

\subsection{The Engel-Lie algebra}

Consider the four dimensional Lie algebra $\mathfrak{g}$ spanned by $X_1,X_2,X_3,X_4$ where the non-zero brackets are given by 
$$[X_1,X_2]=X_3 \quad\mbox{and}\quad [X_1,X_3]=X_4.$$
This Lie algebra is called the Engel-Lie algebra as it relates to Engel structures on $4$-manifolds (whose existence is equivalent to parallellizability). The Lie algebra $\mathfrak{g}$ has nilpotency degree $3$ with $\mathfrak{g}_{-1}=\R X_1+\R X_2$, $\mathfrak{g}_{-2}=\R X_3$, and $\mathfrak{g}_{-3}=\mathfrak{z}=\R X_4$. Proposition \ref{alknalkdnad} implies the following. Recall our notation $H(p)=(-\infty,-p]\cup [p,\infty)$.

\begin{prop}
The four dimensional, three step nilpotent Lie algebra $\mathfrak{g}$ satisfies that
$$\mathcal{E}(\mathfrak{g},N)=\{\gamma\in M_N(\C): H(1)\cap \mathrm{Spec}(i\gamma)=\emptyset\}.$$
\end{prop}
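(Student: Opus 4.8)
The plan is to apply Proposition \ref{alknalkdnad} directly. The Engel-Lie algebra $\mathfrak{g}$ is nilpotent of step length $3 > 2$, with $\mathfrak{g}_{-1} = \R X_1 + \R X_2$ and $\mathfrak{g}_{-2} = \R X_3 = [\mathfrak{g}_{-1}, \mathfrak{g}_{-1}]$, so $\dim(\mathfrak{g}_{-2}) = 1$. Thus the hypotheses of Proposition \ref{alknalkdnad} are satisfied verbatim, and its conclusion gives exactly
$$\mathcal{E}(\mathfrak{g},N)=\{\gamma\in M_N(\C): H(1)\cap \mathrm{Spec}(i\gamma)=\emptyset\}.$$

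First I would verify the structural facts: from the brackets $[X_1,X_2] = X_3$ and $[X_1,X_3] = X_4$ one reads off that $\mathfrak{g}_{-1}$ generates $\mathfrak{g}$, that the lower central series has length $3$, and that $[\mathfrak{g}_{-1},\mathfrak{g}_{-1}] = \R X_3$ is one-dimensional (note $[X_2, X_3] = 0$, so no further brackets of degree $-1$ elements contribute to $\mathfrak{g}_{-2}$). Second, I would check the normalization conventions: in Proposition \ref{alknalkdnad} the parameter $\gamma \in M_N(\C)$ is the single matrix coefficient of $Y_1 := X_3$, and the inner product on $\mathfrak{g}_{-1}$ is chosen so that $X_1, X_2$ is orthonormal, which makes $[X_1, X_2] = X_3$ a unit vector in $\mathfrak{g}_{-2}$ under the induced inner product — consistent with the normalization $\|\iota(X_3)\| = 1$ used throughout. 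Third, I would simply invoke Proposition \ref{alknalkdnad} to conclude.

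Since Proposition \ref{alknalkdnad} is already proven in the excerpt, there is no real obstacle here — the statement is an immediate specialization. The only point requiring a sentence of care is confirming that the Engel algebra genuinely has $\mathfrak{g}_{-2}$ one-dimensional and step length exactly $3$ (as opposed to, say, $\mathfrak{g}_{-2}$ accidentally picking up contributions that would make it larger), but this is transparent from the explicit bracket relations. One could optionally remark that the same conclusion implies the Engel algebra has property $\mathcal{E}^*N$ for all $N$, recovering part of the motivation, though this too is already recorded in Proposition \ref{alknalkdnad}.
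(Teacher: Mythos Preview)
Your proposal is correct and matches the paper's own treatment exactly: the paper states explicitly that ``Proposition \ref{alknalkdnad} implies the following'' and gives no separate proof, so verifying the hypotheses (step length $3>2$ and $\dim\mathfrak{g}_{-2}=1$) and invoking that proposition is precisely what is intended.
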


To get a better feeling, let us describe the situation in more detail. The representation space $\widehat{G}$ of the Engel-Lie group was described in detail in \cite[Example 1.3.10 and 2.2.2]{Corwin_Greenleaf} and \cite[Chapter 3.3]{kirillovbook}. By \cite[page 80]{kirillovbook}, 
$$\widehat{G}=\Gamma_1\dot{\cup} \Gamma_2\dot{\cup} \Gamma_3, \quad
\mbox{where}\quad 
\begin{cases}
\Gamma=\Gamma_1=\R_p^\times \times \R_q,\\
\Gamma_2=\{0_p^+,0_p^-\}\times \R_{q,<0},\\
\Gamma_3=\{0_{pq}\}\times \R^2,
\end{cases}$$
are all subsets that are Hausdorff in their respective induced topologies. Here $p$ and $q$ refer to representation parameters coming from the polynomial invariants $p=X_4$ and $q=2X_2X_4-X_3^2$ for the coadjoint action. 

We now describe $\pi(D_\gamma)$ where $\pi$ ranges over $\widehat{G}$. Here 
$$D_\gamma=X_1^2+X_2^2+\gamma X_3.$$
For the characters $(\xi_1,\xi_2)\in \Gamma_3$, 
$$\pi_{(\xi_1,\xi_2)}(D_\gamma)=-\xi_1^2-\xi_2^2,$$ 
which is invertible in all non-trivial characters. For $\pi_{0,q}^\pm \in \Gamma_2=\{0_p^+,0_p^-\}\times \R_{q,<0}$, the functional dimension is $1$ and the computation on \cite[page 83]{kirillovbook} shows that 
$$\pi_{0,q}^\pm(D_\gamma)=\partial_x^2+4\pi^2qx^2\pm 2\pi \sqrt{-q}\gamma=-2\pi \sqrt{-q}(H\pm \gamma),$$
where $H$ is unitarily equivalent to a harmonic oscillator with spectrum $2\N+1$. In the generic orbits $\pi_{p,q}\in \Gamma=\Gamma_1=\R_p^\times\times \R_{q}$, the functional dimension is $1$ and the computation on \cite[page 83]{kirillovbook} shows that  $\pi_{p,q}(D_\gamma)$ is the anharmonic oscillator
$$\pi_{p,q}(D_\gamma)=\partial_x^2-4\pi^2\left(px^2+\frac{q}{2p}\right)^2+2\pi p\gamma x.$$
There is a detailed analysis of invertibility of $\pi_{p,q}(D_\gamma)$ in \cite{helffercomp,helffpdenil} confirming the analysis above that invertibility is equivalent to $i\gamma\notin H(1)$.

\subsection{$N(4)$}
\label{n4ex}
Consider the six dimensional Lie algebra $\mathfrak{n}(4)$ spanned by $X_1,X_2,X_3,Y_1,Y_2,Z$ where the non-zero brackets are given by 
$$[X_1,X_2]=Y_1, [X_2,X_3]=Y_2 \quad\mbox{and}\quad [X_1,Y_2]=[Y_1,X_3]=Z.$$
This is the Lie algebra of the group $N(4)$ of all real upper unipotent $4\times 4$-matrices.  The group $N(4)$ is the unipotent radical in the minimal parabolic subgroup of $SL_4(\R)$. The Lie algebra $\mathfrak{n}(4)$ has nilpotency degree $3$ with \\ $\mathfrak{n}(4)_{-1}=\R X_1+\R X_2+\R X_3$, $\mathfrak{n}(4)_{-2}=\R Y_1+\R Y_2$, and $\mathfrak{n}(4)_{-3}=\mathfrak{z}=\R Z$.

\begin{prop}
\label{alakdnjak}
The six dimensional, three step nilpotent Lie algebra $\mathfrak{n}(4)$ satisfies that
$$\mathcal{E}(\mathfrak{n}(4),N)\subseteq \{\gamma\in M_N(\C)^2: H(1)\cap \mathrm{Spec}(\gamma(\xi))=\emptyset, \; |\xi|=1\},$$
and $\mathfrak{n}(4)$ has property $\mathcal{E}^*N$ for any $N$.
\end{prop}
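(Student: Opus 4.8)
\emph{First inclusion.} This is immediate from Theorem \ref{spechighN}: since $\dim\mathfrak{n}(4)_{-1}=3$ is odd, every alternating form on $\mathfrak{n}(4)_{-1}$ is degenerate, so every Kirillov form $\omega_\xi$ with $\xi\in\mathfrak{n}(4)_{-2}^*$ is degenerate on $\mathfrak{n}(4)_{-1}$, and the step-$2$ truncation $\mathfrak{n}(4)_{-1}\oplus\mathfrak{n}(4)_{-2}$ therefore admits no flat coadjoint orbit. Theorem \ref{spechighN} then yields the stated inclusion.

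\emph{Strategy for property $\mathcal{E}^*N$.} The plan is to promote this inclusion to the equality
$$\mathcal{E}(\mathfrak{n}(4),N)=\{\gamma\in M_N(\C)^2:\ H(1)\cap\mathrm{Spec}(\gamma(\xi))=\emptyset\ \text{for all}\ |\xi|=1\}=:\mathcal{S}.$$
The set $\mathcal{S}$ is starshaped with respect to $0$, because for $t\in(0,1]$ an eigenvalue $\mu$ of $\gamma(\xi)=t\inv(t\gamma)(\xi)$ that is non-real forces the corresponding eigenvalue of $(t\gamma)(\xi)$ to be non-real, while if $\mu\in\R$ with $|\mu|<1$ then $|t\mu|\leq|\mu|<1$; hence $t\mathcal{S}\subseteq\mathcal{S}$, and once $\mathcal{E}(\mathfrak{n}(4),N)=\mathcal{S}$ is known, property $\mathcal{E}^*N$ follows.

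\emph{Proof of $\mathcal{S}\subseteq\mathcal{E}(\mathfrak{n}(4),N)$.} I would run through $\widehat{N(4)}$ by the Kirillov orbit method, as in the other examples of this section. Writing $\mathfrak{z}=\R Z$, there are three strata. On the non-trivial characters $\pi(D_\gamma)$ is a negative scalar, hence injective, and imposes no condition on $\gamma$. The representations with $\pi(Z)=0$ factor through the step-$2$ truncation $\mathfrak{n}(4)/\mathfrak{z}$; computing with a polarizing subalgebra identifies $\pi(D_\gamma)$, up to unitary equivalence, with an operator $\partial_t^2-|\eta|^2t^2+c+\gamma(\eta)$ on $L^2(\R)\otimes\C^N$ for $\eta\in\mathfrak{n}(4)_{-2}^*\setminus\{0\}$ and $c\leq 0$, and letting $(\eta,c)$ vary over all admissible values one reads off from the harmonic-oscillator spectrum that injectivity of all of these is exactly the condition defining $\mathcal{S}$. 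The remaining, generic representations are parametrized by $z:=\xi(Z)\neq 0$ together with the quadratic invariant $q:=\eta_1\eta_2-z\xi_2$; since $\mathfrak{m}:=\R X_2+\R Y_1+\R Y_2+\R Z$ is an abelian ideal with $N(4)/\exp(\mathfrak{m})\cong\R^2$, each of them is induced from a character of $\exp(\mathfrak{m})$ and acts on $L^2(\R^2)\otimes\C^N$ as a two-dimensional operator $\partial_s^2+\partial_u^2-P(s,u)^2+L(s,u)$, with $P$ a real-valued quadratic polynomial and $L$ an $M_N(\C)$-valued affine polynomial depending linearly on $\gamma$.

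\emph{The main obstacle} is the last stratum: one must show these generic operators are injective as soon as $\gamma\in\mathcal{S}$, i.e.\ that the generic stratum imposes nothing beyond the $\pi(Z)=0$ stratum. I expect this to follow from an $L^2$-energy estimate: a null vector $\psi$ of $T:=\pi(D_\gamma)$ satisfies $\Re\langle T\psi,\psi\rangle=\Im\langle T\psi,\psi\rangle=0$, the self-adjoint part of $T$ equals $\partial_s^2+\partial_u^2-P^2$ minus a first-order term, and $-\partial_s^2-\partial_u^2+P^2$ controls that first-order term by the spectral gaps of the partial harmonic oscillators $-\partial_s^2+P^2$ and $-\partial_u^2+P^2$ away from the degeneracy locus $\{P=0\}$ together with the transverse kinetic energy along it — the delicate point being to extract the threshold $1$ of $\mathcal{S}$ sharply. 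For $N=1$ this step can be avoided: by Theorem \ref{lnonadhighN} one has $\{\gamma:\|\Im(\gamma)\|_{W/\R\Re(\gamma)}<1\}\subseteq\mathcal{E}(\mathfrak{n}(4),1)$, and for $\mathfrak{n}(4)$ the norm $\|\cdot\|_W$ on $\mathfrak{n}(4)_{-2}\cong\R^2$ is the Euclidean one (the image of $\mathfrak{n}(4)_{-2}$ in $\mathfrak{so}(3)$ consists of matrices of operator norm $\sqrt{b_1^2+b_2^2}$), so this lower bound already equals $\mathcal{S}$ and $\mathcal{E}(\mathfrak{n}(4),1)=\mathcal{S}$; the substance of the proposition for $N>1$ is then precisely the generic-stratum estimate above. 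Once that is in place, combining the three strata gives $\mathcal{E}(\mathfrak{n}(4),N)=\mathcal{S}$, and property $\mathcal{E}^*N$ follows.
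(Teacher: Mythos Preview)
Your argument for the inclusion $\mathcal{E}(\mathfrak{n}(4),N)\subseteq\mathcal{S}$ via Theorem \ref{spechighN} is correct, and your observation that for $N=1$ the set $\{\gamma:\|\Im(\gamma)\|_{W/\R\Re(\gamma)}<1\}$ coincides with $\mathcal{S}$ (because the image of $\mathfrak{n}(4)_{-2}$ in $\mathfrak{so}(3)$ carries the Euclidean norm) is a nice remark that indeed gives $\mathcal{E}(\mathfrak{n}(4),1)=\mathcal{S}$.

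The gap is precisely where you flag it: for $N>1$ your strategy hinges on the equality $\mathcal{E}(\mathfrak{n}(4),N)=\mathcal{S}$, and the only evidence you offer for the generic stratum is an ``expected'' energy estimate. That estimate is not established, and in fact the paper leaves the exact shape of $\mathcal{E}(\mathfrak{n}(4),N)$ as an open question immediately after the proposition. So as written your proof of property $\mathcal{E}^*N$ is incomplete for $N>1$.

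The paper sidesteps this entirely. After computing the generic represented operator
$$H_{\alpha,\eta}(\gamma)=\pi_{\alpha,\eta}(D_\gamma)=-\Delta-4\pi^2(\eta-\alpha xy)^2-2\pi i\alpha(\gamma_2 x-\gamma_1 y)$$
on $L^2(\R^2,\C^N)$ for $(\alpha,\eta)\in\R^\times\times\R$, it observes a scaling identity: the substitution $u=t^{1/2}x$, $v=t^{1/2}y$ gives
$$H_{\alpha,\eta}(t\gamma)=t^{-1}H_{t^{-1/2}\alpha,\,t^{1/2}\eta}(\gamma).$$
Thus if $H_{\alpha,\eta}(\gamma)$ is injective for \emph{all} $(\alpha,\eta)\in\R^\times\times\R$, the same is automatically true of $H_{\alpha,\eta}(t\gamma)$ for every $t\in(0,1]$, because rescaling merely reparametrizes the family. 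Since the non-generic strata $\Gamma_2\cup\Gamma_3\cup\Gamma_4$ contribute exactly the condition $\mathcal{S}$ (which is starshaped), one concludes $t\mathcal{E}(\mathfrak{n}(4),N)\subseteq\mathcal{E}(\mathfrak{n}(4),N)$ directly, without ever deciding whether the generic stratum imposes a strictly stronger constraint than $\mathcal{S}$. This is the missing idea in your argument.
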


In this example, the necessary and sufficient condition in Theorem \ref{lnonad} does not stand in dichotomy as it did when $\dim\mathfrak{g}_{-2}=1$ in Proposition \ref{alknalkdnad}. 

\begin{proof}
The representation space $\widehat{N(4)}$ was described in detail in \cite[Example 1.3.11 and 2.2.8]{Corwin_Greenleaf}. By \cite[Example 1.3.11]{Corwin_Greenleaf}, 
$$\widehat{N(4)}=\Gamma_1\,\dot{\cup}\, \Gamma_2\,\dot{\cup} \,\Gamma_3\,\dot{\cup}\, \Gamma_4,$$
where the space of generic orbits take the form $\Gamma_1=\R^\times \times \R$, and all form subsets that are Hausdorff in their respective induced topologies. The functional dimension of the representations in $\Gamma_1$ is $2$, in $\Gamma_2$ and $\Gamma_3$ it is $1$ and $\Gamma_4\cong \R^3$ consists of characters.

We want to characterize invertibility of $\pi(D_\gamma)$. Here 
$$D_\gamma=X_1^2+X_2^2+X_3^2+\gamma_1 Y_1+\gamma_2Y_2.$$ 
Trivially, $\pi(D_\gamma)$ is invertible for any non-trivial character $\pi\in \Gamma_4\setminus \{0\}$. By the computations in \cite[Example 2.2.8]{Corwin_Greenleaf}, all representations in $\Gamma_2\dot{\cup} \Gamma_3$ are induced from quotients onto $\mathfrak{h}_1\oplus \R[-1]$. Therefore Proposition \ref{gener} implies that $\pi(D_\gamma)$ is injective for all $\pi \in \Gamma_2\,\dot{\cup}\, \Gamma_3$ if and only if 
$$\mathrm{Spec}(\xi_1\gamma_1+\xi_2 \gamma_2)\cap \left((-i\infty,-i]\cup [i,i\infty)\right)=\emptyset,$$ 
for any $\xi$ such that $\xi_1^2+\xi_2^2=1$. \\  We see that 
$$\mathcal{E}_{ \Gamma_2\dot{\cup} \Gamma_3\dot{\cup} \Gamma_4\setminus \{1\}}(\mathfrak{n}(4),N)=\{\gamma\in M_N(\C)^2: H(1)\cap \mathrm{Spec}(\gamma(\xi))=\emptyset, \; |\xi|=1\}.$$

We now turn our focus to the generic orbits $\Gamma_1$. For an $(\alpha,\eta)\in \R^\times\times\R\cong \Gamma_1$, the functional dimension is $2$ and the computation \cite[Example 2.2.8]{Corwin_Greenleaf} gives us that 
$$\pi_{\alpha,\eta}(D_\gamma)=-\Delta-4\pi^2(\eta-\alpha xy)^2-2\pi i\alpha (\gamma_2x-\gamma_1 y).$$
This operator was studied in \cite{helffjorg}. We use the geometer's convention $\Delta=-\partial_x^2-\partial_y^2$. For notational simplicity, introduce the operator $H_{\alpha,\eta}(\gamma):=\pi_{\alpha,\eta}(D_\gamma)$. It is a self-adjoint operator with discrete spectrum on $L^2(\R^2)$ for $(\alpha,\eta)\in \R^\times\times\R$. In the new variables $u=t^{1/2}x$ and $v=t^{1/2}y$, 
\begin{align*}
H_{\alpha,\eta}(t\gamma)&=t^{-1}\left(-\Delta_{u,v}-4\pi^2(\eta t^{1/2}-\alpha t^{-1/2} uv)^2-2\pi i\alpha t^{-1/2} (\gamma_2u-\gamma_1 v\right)=\\
&=t^{-1}H_{t^{-1/2}\alpha,t^{1/2}\eta}(\gamma).
\end{align*}
Combining this identity, with the computation of $\mathcal{E}_{ \Gamma_2\dot{\cup} \Gamma_3\dot{\cup} \Gamma_4\setminus \{1\}}(\mathfrak{n}(4),N)$ above and the fact that $H_{\alpha,\eta}(0)$ is injective for any $(\alpha,\eta)\in \R^\times\times\R$ (by Hörmander's sum of squares theorem) we see that if $\gamma\in \mathcal{E}(\mathfrak{n}(4),N)$ then $t\gamma\in \mathcal{E}(\mathfrak{n}(4),N)$ for any $t\in [0,1]$. 
\end{proof}

\begin{question}
Can we characterize the subset $\mathcal{E}(\mathfrak{n}(4),N)\subseteq M_N(\C)^2$ more precisely? This subset consists precisely of the matrices $\gamma=(\gamma_1,\gamma_2)\in M_N(\C)^2$ such that additionally to the requirement that $H(1)\cap \mathrm{Spec}(\gamma(\xi))=\emptyset$, for $|\xi|=1$, the operator 
$$-\Delta-4\pi^2(\eta-\alpha xy)^2-2\pi i\alpha (\gamma_2x-\gamma_1 y) $$
is invertible for any $(\alpha,\eta)\in \R^\times\times\R$. 

Using the scaling invariance argument from the proof of Proposition \ref{alakdnjak} and a reflection in $(x,y)$ and $\eta$, it follows that $\gamma=(\gamma_1,\gamma_2)\in \mathcal{E}(\mathfrak{n}(4),N)$ if and only if $H(1)\cap \mathrm{Spec}(\gamma(\xi))=\emptyset$, for $|\xi|=1$, and the operator
\begin{align*}
&-\Delta-4\pi^2(\eta- xy)^2-2\pi i (\gamma_2x-\gamma_1 y),
\end{align*}
is invertible on $L^2(\R^2)$ for any $\eta\in\R$.
\end{question}

\subsection{$\tilde{\mathfrak{h}}_{2m+1}$}

Let us consider another step $3$ nilpotent Lie algebra. First, consider the Heisenberg Lie algebra $\mathfrak{h}_{2m+1}$ of dimension $2m+1$ generated by $X_1,\ldots, X_m, Y_1,\ldots, Y_m,Z$ with the non-zero brackets given by 
$$[X_j,Y_k]=\delta_{jk}Z.$$
We have that $\mathfrak{h}_{2m+1}$ is of step $2$ with $(\mathfrak{h}_{2m+1})_{-1}=\sum_j \R X_j +\R Y_j$ and $(\mathfrak{h}_{2m+1})_{-2}=\mathfrak{z}=\R Z$. We now apply a construction of Mohsen \cite{mohsen2}, later expanded on in \cite{goffkuz}. The Mohsen modification of $\mathfrak{h}_{2m+1}$ is as a vector space defined by 
$$\tilde{\mathfrak{h}}_{2m+1}=\mathfrak{h}_{2m+1}\oplus (\mathfrak{h}_{2m+1})^*\oplus \R Z_0,$$
and is generated by $4m+3$ generators $\tilde{X}_1,\ldots, \tilde{X}_m, \tilde{Y}_1,\ldots, \tilde{Y}_m,\tilde{Z}$, $e_{X_1},\ldots, e_{X_m}, e_{Y_1},\ldots, e_{Y_m}$, $e_{Z},Z_0$ subject to the bracket relations
$$\begin{cases}
[\tilde{X}_j,\tilde{Y}_k]=\delta_{jk}\tilde{Z},\\
[\tilde{X}_j,e_Z]=e_{\tilde{Y}_j},\\
[\tilde{Y}_j,e_Z]=-e_{\tilde{X}_j},\\
[\tilde{Z},e_Z]=Z_0,\\
[\tilde{X}_j,e_{X_k}]=[\tilde{Y}_j,e_{Y_k}]=\delta_{j,k}Z_0
\end{cases}$$
We have that $(\tilde{\mathfrak{h}}_{2m+1})_{-1}$ is spanned by $\tilde{X}_1,\ldots, \tilde{X}_m, \tilde{Y}_1,\ldots, \tilde{Y}_m,e_{Z}$, $(\tilde{\mathfrak{h}}_{2m+1})_{-2}$ is spanned by $\tilde{Z}, e_{X_1},\ldots, e_{X_m}, e_{Y_1},\ldots, e_{Y_m}$ and finally $(\tilde{\mathfrak{h}}_{2m+1})_{-3}=\mathfrak{z}$ is spanned by $Z_0$. 

The Mohsen modification always produces a nilpotent Lie algebra with one-dimensional center and flat orbits, so $\Gamma=\R^\times$. An interesting feature of the Mohsen modification is that a representation in a flat orbit $\hbar\in \R^\times$ is an extension of the left regular representation. In light of this fact, we can readily describe $\pi_\hbar(D_\gamma)$ where $\hbar$ ranges over the flat orbits $\Gamma\cong \R^\times$. Here 
$$D_\gamma=\sum_{j=1}^m(\tilde{X}_j^2+\tilde{Y}_j^2)+e_{Z}^2+\gamma_0\tilde{Z}+\sum_{l=1}^m (\gamma_{2l-1}e_{X_l}+\gamma_{2l}e_{Y_l}).$$ 
The representation in the flat orbit in particular has functional dimension $2m+1$, and we write the coordinates in that space by $(x,y,z)\in \R^m\times \R^m\times \R$. By homogeneity, we compute that 
\begin{align*}
\pi_\hbar(D_\gamma)=|\hbar|\sum_{j=1}^m&\left((\partial_{x_j}-\frac{y}{2}\partial_z)^2+(\partial_{y_j}+\frac{x}{2}\partial_z)^2\right)-|\hbar|z^2+\\
&+\hbar \left(\gamma_0i\partial_z-\frac{1}{2}\sum_{l=1}^m (\gamma_{2l-1}x_l+\gamma_{2l}y_l)\right).
\end{align*}
So, up to unitary equivalence, $\pi_\hbar(D_\gamma)$ coincides with the operator 
$$|\hbar|(-L_B+\partial_B^2)+i\hbar\left(\gamma_0 B-\frac{1}{2}\sum_{l=1}^m (\gamma_{2l-1}x_l+\gamma_{2l}y_l)\right),$$
acting on $L^2(\R^m_x\times \R^m_y\times \R_B)$. Here $L_B$ denotes the Landau hamiltonian at magnetic field strength $B$.

\begin{prop}
The subset $\mathcal{E}(\tilde{\mathfrak{h}}_{2m+1},N)\subseteq M_N(\C)^{2m+1}$ consists of the matrices $\gamma=(\gamma_{l})_{l=0}^{2m}\in M_N(\C)^{2m+1}$ such that the two operators
$$-L_B+\partial_B^2\pm i\left(\gamma_0 B-\frac{1}{2}\sum_{l=1}^m (\gamma_{2l-1}x_l+\gamma_{2l}y_l)\right) $$
 on $L^2(\R^m_x\times \R^m_y\times \R_B)$, are invertible.
\end{prop}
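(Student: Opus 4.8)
The plan is to run the Kirillov orbit method, as announced at the start of Section~\ref{ljnljnad}: compute $\pi(D_\gamma)$ over the generic (flat) orbits, and show that the non-generic strata impose no further constraint. Since the Mohsen modification always produces a nilpotent Lie algebra with one-dimensional center $\R Z_0$ and with flat orbits, the set $\Gamma$ of flat orbits is the Zariski-open, dense, Hausdorff subset $\{\hbar\in\mathfrak{z}^*:\hbar\neq 0\}\cong\R^\times$ of $\widehat{G}$, where $G=\exp\tilde{\mathfrak{h}}_{2m+1}$, and $\widehat G\setminus(\Gamma\cup\{1\})$ consists of non-flat orbits of lower functional dimension.

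First I would treat the flat orbits. The excerpt already records that, up to unitary equivalence, $\pi_\hbar(D_\gamma)=|\hbar|(-L_B+\partial_B^2)+i\hbar(\gamma_0B-\tfrac12\sum_{l=1}^m(\gamma_{2l-1}x_l+\gamma_{2l}y_l))$ on $L^2(\R^m_x\times\R^m_y\times\R_B)$. Rescaling the variables $x,y,B$ by the canonical dilations of $\tilde{\mathfrak{h}}_{2m+1}$ shows that $\pi_\hbar(D_\gamma)$ is unitarily equivalent, up to the positive scalar $|\hbar|$, to $\pi_{\mathrm{sgn}(\hbar)}(D_\gamma)$; hence the family $\{\pi_\hbar(D_\gamma)\}_{\hbar\in\R^\times}$ is injective on smooth vectors if and only if the two operators $-L_B+\partial_B^2\pm i(\gamma_0B-\tfrac12\sum_l(\gamma_{2l-1}x_l+\gamma_{2l}y_l))$ are. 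Because the dilation collapses $\{\pi_\hbar\}_{\hbar>0}$ and $\{\pi_\hbar\}_{\hbar<0}$ each to a single unitary-equivalence class (after scaling), the uniform maximal estimate over the closed cone $\overline\Gamma$ furnished by the Rockland condition there (in the sense of Nourrigat, cf.\ Proposition~\ref{somsplpklad}) is automatic as soon as it holds at $\hbar=\pm1$; this is what lets us upgrade ``injective on smooth vectors'' to ``invertible'' for $\pi_{\pm1}(D_\gamma)$, and conversely read off injectivity from invertibility. Contrast this with the Theorem of the preceding subsection, where noncompactness of $\Gamma$ forced an extra $\sup$-bound; here the one-dimensionality and homogeneity of the center remove it.

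Next I would dispatch the non-flat strata. For a nontrivial character $\pi\in\widehat G$, $\pi(D_\gamma)$ is a strictly positive scalar, so invertible (also consistent with H\"ormander's sum of squares theorem for $\gamma=0$). For the remaining non-flat orbit representations, the cleanest route uses the stated feature that each flat-orbit representation $\pi_\hbar$ is an extension of the left regular representation of $G$: combined with the fact that the Plancherel measure on $\widehat G$ is concentrated on $\Gamma$, the maximal estimate for $D_\gamma$ secured over $\overline\Gamma$ (made uniform by the dilation argument) yields the corresponding maximal estimate for $D_\gamma$ on $C^\infty_c(G)$, which by \cite{rothcrit} is equivalent to the full Rockland condition, i.e.\ to $\gamma\in\mathcal{E}(\tilde{\mathfrak{h}}_{2m+1},N)$. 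An alternative, more hands-on route is to observe that every non-flat orbit representation factors through a proper graded quotient of $\tilde{\mathfrak{h}}_{2m+1}$, so that Proposition~\ref{lknjnada} expresses injectivity of $\pi(D_\gamma)$ through $\mathcal{E}$ of that lower-dimensional quotient, and to check by inspection of the bracket relations that these conditions are already implied by invertibility of the two flat-orbit operators. Conversely, $\gamma\in\mathcal{E}(\tilde{\mathfrak{h}}_{2m+1},N)$ forces $\pi_{\pm1}(D_\gamma)$ to be injective on smooth vectors, hence invertible by the uniform estimate above.

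I expect the main obstacle to be precisely the claim that the non-flat strata are redundant and that injectivity on smooth vectors can be promoted to genuine invertibility of $\pi_{\pm1}(D_\gamma)$ with no extra spectral condition. This hinges on combining the consequence of ``$\pi_\hbar$ extends the regular representation'' (equivalently, that $\sup_{\hbar}\|\pi_\hbar(\cdot)\|$ recovers the $C^*(G)$-norm and that Plancherel measure lives on $\Gamma$) with the closed-cone, uniform-constant formulation of the Rockland condition, and on the dilation-homogeneity of the family $\{\pi_\hbar\}$ that makes the requisite uniformity free. Once that is in place, the proposition follows by assembling the flat-orbit computation with the character case.
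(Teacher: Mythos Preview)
Your argument is correct in outline and reaches the same conclusion, but it takes a different route from the paper. The paper's proof is two lines: it invokes \cite[Theorem 22.20]{goffkuz}, which for a nilpotent Lie algebra with flat orbits and one-dimensional centre says directly that $D_\gamma$ satisfies the Rockland condition if and only if $\pi_\hbar(D_0)\pi_\hbar(D_\gamma)^{-1}$ exists and is uniformly bounded over $\hbar\in\Gamma=\mathfrak{z}^*\setminus\{0\}$; then homogeneity collapses this to invertibility at $\hbar=\pm 1$. All the work you do by hand---dispatching the non-flat strata via the Plancherel/regular-representation argument, and promoting injectivity on smooth vectors to genuine invertibility via uniform closed-cone estimates---is precisely what is packaged inside that cited theorem.

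What each approach buys: the paper's is short and conceptually clean, but opaque unless the reader has \cite{goffkuz} to hand. Yours is self-contained and makes explicit why the non-generic orbits impose no further constraint (your ``extension of the regular representation implies Plancherel measure is concentrated on $\Gamma$, hence maximal estimates on $\overline\Gamma$ suffice'' is exactly the mechanism behind the cited result). Your homogeneity reduction to $\hbar=\pm1$ is identical to the paper's. The one place you could tighten is the passage from injectivity on smooth vectors to invertibility: rather than appealing to Nourrigat's uniform estimates in the abstract, you could note, as the paper implicitly does, that the relevant quantity $\pi_\hbar(D_0)\pi_\hbar(D_\gamma)^{-1}$ is dilation-invariant, so boundedness at $\hbar=\pm1$ \emph{is} the uniform bound.
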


\begin{proof}
The Lie algebra $\tilde{\mathfrak{h}}_{2m+1}$ has flat coadjoint orbits and one-dimensional centre, so $D_\gamma$ is $H$-elliptic if and only if $\pi_\hbar(D_0)\pi_\hbar(D_\gamma)^{-1}$ exists and is uniformly bounded in $\hbar\in \Gamma=\mathfrak{z}^*\setminus \{0\}$, see \cite[Theorem 22.20]{goffkuz}. It is clear from homogeneity that $\pi_\hbar(D_0)\pi_\hbar(D_\gamma)^{-1}$ exists and is uniformly bounded if and only $\pi_\hbar(D_\gamma)$ is invertible for $\hbar=\pm 1$ and the proposition follows.
\end{proof}

\section{A final detail in a proof of Rothschild-Stein}
\label{fillingap}

 Theorem 2 in \cite{rothstein} (discussed above in Section \ref{seconestan}) can nowadays be proved more simply by using the Rockland criterion -- that was not formulated at the time of \cite{rothstein} but is now a theorem, cf. \cite{AMY,HN79a,HNo,melinoldpreprint,vanerpyuncken}. Even if \cite[Theorem 2]{rothstein} is correct as stated, the proof contains a minor oversight: in \cite[equation (4.1) on page 258]{rothstein}, they introduce $2l\leq n$ as the rank of a functional $\rho^*$ while in the next paragraph, the authors construct a representation $\pi_\lambda$ while implicitly assuming that $2l<n$. We discuss the proof here in a way where the case $2l=n$ is also covered. In this remaining case $2l=n$, \cite[Theorem 2]{rothstein} follows from applying the mean value theorem. As one can see from the statement at hand, it is a rank 2 statement and we consider the operator
$$
P = \sum_{j=1}^n X_j^2 + i \sum_{l=1}^m b_l Y_l
$$
where $[\mathfrak g_{-1},\mathfrak g_{-1}] =\mathfrak g_{-2}$, and $b_l\in \mathbb R$.

As observed by Helffer in \cite{H-0} and Beals \cite{Be}, one can use the results of Boutet-Grigis-Helffer giving equivalent conditions to Rockland's criterion.
The conditions of hypoellipticity can be expressed in the following way. For each $\eta \in \mathfrak g_{-2}^*\setminus \{0\}$, let $r_\eta$ denote the rank of the Kirillov form on $\mathfrak g_{-1}\times \mathfrak g_{-1}$. The Kirillov form $\omega_\eta$ is defined by
$$
\omega_\eta(X,X') = \eta([X,X'])
$$
Our assumptions imply that
$$
2 \leq r_\eta \leq \dim \mathfrak g_{-1}\,.
$$
By homogeneity, it is enough to consider the case when $|\eta|=1$. In the notation of \cite{rothstein}, $r_\eta=2l$ and $n=\dim\mathfrak{g}_{-1}$.

Without entering into the details, we can with each $\eta$ associate the value $\lambda (\eta) >0$ as the bottom of the spectrum of some Harmonic Oscillator $H_\eta$. Write $(\lambda_j(\eta))_{j=1}^\infty$ for the growing sequence of eigenvalues of the harmonic oscillator $H_\eta$, so $\lambda(\eta)=\lambda_1(\eta)$. The criterion for hypoellipticity, which is a necessary and sufficient condition, reads
\begin{itemize}
\item If $r_\eta < \dim \mathfrak g_{-1}$, $\sum_{l=1}^m b_l \eta_l \not\in [\lambda(\eta), +\infty)$
\item If $r_\eta= \dim \mathfrak g_{-1}$, $\sum_{l=1}^m b_l\eta_l \not\in \{\lambda(\eta), \lambda_2(\eta),\lambda_3(\eta),\dots\}$. 
\end{itemize}

Note also that $\lambda(\eta)=\lambda(-\eta)$. We now rephrase these conditions under various assumptions. The first result is the following. 

\begin{prop}\label{propa}
If $\sum_{l=1}^m b_l\eta_l < \lambda(\eta)$, for any $|\eta|=1$, the operator $P$ is hypoelliptic.
\end{prop}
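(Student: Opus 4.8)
The plan is to read off the conclusion directly from the hypoellipticity criterion recalled just above, which---following Boutet--Grigis--Helffer in the form used by Helffer \cite{H-0} and Beals \cite{Be}---is both necessary and sufficient for hypoellipticity of $P$. So I would fix $\eta \in \mathfrak g_{-2}^*$ with $|\eta| = 1$ and verify, under the hypothesis $\sum_{l=1}^m b_l\eta_l < \lambda(\eta)$, that the appropriate bulleted condition holds in each of the two possible cases for the rank $r_\eta$ of the Kirillov form $\omega_\eta$ on $\mathfrak g_{-1}\times\mathfrak g_{-1}$.

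In the case $r_\eta < \dim \mathfrak g_{-1}$ the required condition is $\sum_{l=1}^m b_l\eta_l \notin [\lambda(\eta), +\infty)$, which is immediate from the hypothesis since $\sum_{l=1}^m b_l\eta_l$ lies strictly to the left of $\lambda(\eta)$. In the case $r_\eta = \dim \mathfrak g_{-1}$ the required condition is $\sum_{l=1}^m b_l\eta_l \notin \{\lambda_1(\eta), \lambda_2(\eta), \lambda_3(\eta),\dots\}$ with $\lambda_1(\eta) = \lambda(\eta)$; here I would use that the sequence $(\lambda_j(\eta))_{j\geq 1}$ of eigenvalues of the harmonic oscillator $H_\eta$ is nondecreasing, so that $\lambda_j(\eta) \geq \lambda(\eta) > \sum_{l=1}^m b_l\eta_l$ for every $j$, whence $\sum_{l=1}^m b_l\eta_l$ coincides with none of the eigenvalues of $H_\eta$. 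Since $|\eta|=1$ was arbitrary, the criterion then yields hypoellipticity of $P$.

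There is essentially no obstacle to overcome: the statement is an elementary corollary of the criterion, the one point being exploited is that $\lambda(\eta)$ is by definition the bottom of $\mathrm{Spec}(H_\eta)$, so a strict inequality against the ground state energy automatically separates $\sum_{l=1}^m b_l\eta_l$ from the entire spectrum in the second case as well. This is precisely why this sufficiency direction is soft, in contrast with the necessity direction---and its overlooked borderline case $r_\eta = 2l = n$---to which the remainder of this section is devoted.
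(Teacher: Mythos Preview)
Your proposal is correct and matches the paper's treatment: the paper does not give a separate proof but simply notes that this proposition corresponds to \cite[Theorem 1']{rothstein}, i.e.\ it is an immediate consequence of the necessary-and-sufficient criterion recalled just before. Your write-up merely spells out the two cases of that criterion, which is exactly the intended reading.
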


This corresponds to \cite[Theorem 1']{rothstein}. To determine if the condition is necessary, we obtain the following two propositions from the discussion above.

\begin{prop}
\label{propb}
If for some $\eta$ (with $|\eta|=1$) we have $\sum_{l=1}^m b_l\eta_l = \lambda(\eta)$, the operator is not hypoelliptic.
\end{prop}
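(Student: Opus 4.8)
The plan is to exploit the hypoellipticity criterion recalled just above the statement, namely that when $r_\eta = \dim\mathfrak{g}_{-1}$ the represented operator $\pi_\eta(P)$ fails to be injective precisely when $\sum_l b_l\eta_l$ coincides with one of the eigenvalues $\lambda_j(\eta)$ of the harmonic oscillator $H_\eta$, and when $r_\eta < \dim\mathfrak{g}_{-1}$ injectivity fails as soon as $\sum_l b_l\eta_l \in [\lambda(\eta),+\infty)$. In either case, if $\sum_l b_l\eta_l = \lambda(\eta)$ for some unit $\eta$, the value $\lambda(\eta)=\lambda_1(\eta)$ lies in the forbidden set, so the corresponding irreducible representation $\pi_\eta$ kills a nonzero smooth vector (the ground state of $H_\eta$, suitably transported). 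Hence the Rockland condition fails at that $\eta$, and by the equivalence of the Rockland condition with maximal hypoellipticity (\cite{rothcrit}, and the criterion of Boutet-Grigis-Helffer invoked above) the operator $P$ is not hypoelliptic.

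Concretely, I would first reduce to $|\eta|=1$ by homogeneity, as already noted. Then I would split into the two cases according to whether $r_\eta<\dim\mathfrak{g}_{-1}$ or $r_\eta = \dim\mathfrak{g}_{-1}$. In the first case the conclusion is immediate from the stated criterion since $[\lambda(\eta),+\infty)$ contains $\lambda(\eta)$. In the second case one invokes the refined criterion: the forbidden set is exactly the spectrum $\{\lambda_1(\eta),\lambda_2(\eta),\dots\}$ of $H_\eta$, and $\sum_l b_l\eta_l = \lambda(\eta) = \lambda_1(\eta)$ lies in it. Either way $\pi_\eta(P)$ is not injective on smooth vectors, which is exactly the negation of the Rockland condition, so $P$ is not hypoelliptic.

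The only subtlety — and the reason this proposition is being stated separately rather than folded into the surrounding discussion — is that the criterion as recalled is phrased as a clean dichotomy, but one must be slightly careful that the boundary value $\lambda(\eta)$ genuinely belongs to the non-hypoelliptic side of the criterion in \emph{both} regimes of $r_\eta$, including the borderline case $r_\eta = \dim\mathfrak{g}_{-1}$ (equivalently $2l=n$) that was the source of the gap in \cite{rothstein}. This is handled by noting that the explicit representation $\pi_\eta$ attached to $\eta$ realizes $\pi_\eta(P)$ as (a positive multiple of) $H_\eta - \sum_l b_l\eta_l$ on the appropriate $L^2$ space, so that the equality $\sum_l b_l\eta_l = \lambda(\eta)$ forces the ground-state eigenvector into the kernel; this vector is smooth, so the Rockland condition genuinely fails. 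No further estimates are needed — the argument is essentially a one-line reading of the criterion once the $2l=n$ case has been correctly incorporated into it, which is precisely what the present section accomplishes.
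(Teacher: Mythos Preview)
Your proposal is correct and matches the paper's approach: the paper states Propositions \ref{propb} and \ref{propc} as immediate consequences of the necessary-and-sufficient hypoellipticity criterion recalled just before them (``we obtain the following two propositions from the discussion above''), and your argument is precisely that reading of the criterion, case-split on $r_\eta$. The additional remarks you make about the ground state and the Rockland condition are accurate elaborations but are not needed beyond what the paper already invokes.
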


\begin{prop}
\label{propc}
If for some $\eta$ (with $|\eta|=1$) we have $\sum_{l=1}^m b_l \eta_l \geq \lambda(\eta)$, and $r_\eta < \dim \mathfrak{g}_{-1}$, the operator is not hypoelliptic. 
\end{prop}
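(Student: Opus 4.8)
The plan is to derive Proposition~\ref{propc} from the same spectral picture underlying Propositions~\ref{propa} and~\ref{propb}, carefully handling precisely the case $r_\eta = 2l = n = \dim\mathfrak{g}_{-1}$ that was glossed over in \cite{rothstein}. First I would fix an $\eta$ with $|\eta|=1$ realizing $\sum_l b_l\eta_l \geq \lambda(\eta)$ and, by homogeneity and continuity of $\eta\mapsto\lambda(\eta)$, reduce to showing non-hypoellipticity via the Rockland criterion: it suffices to produce a non-trivial irreducible unitary representation $\pi$ of $G$ and a non-zero smooth vector $H$ with $\pi(P)H = 0$. When $r_\eta < n$, the Kirillov form $\omega_\eta$ is degenerate on $\mathfrak{g}_{-1}$, so there is a non-zero $X_0\in\mathfrak{g}_{-1}$ with $\eta([X_0,\cdot])=0$; the orbit through $\eta$ then carries an extra flat direction, and the induced representation $\pi$ decomposes the operator $\pi(P)$ as $H_\eta \oplus (\text{continuous part})$, i.e. as the harmonic oscillator $H_\eta$ tensored with a multiplication operator in the variable dual to $X_0$, shifted by the scalar $\sum_l b_l\eta_l$. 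Concretely $\pi(P)$ acts on $L^2(\mathbb{R}^l\times\mathbb{R}^k)$, $k = n - 2l \geq 1$, as $H_\eta\otimes 1 + 1\otimes(-\partial_s^2) - (\sum_l b_l\eta_l)$ after a suitable choice of polarization.

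The key step is then an explicit construction of an approximate, and then exact, null section. Since $\sum_l b_l\eta_l \geq \lambda(\eta) = \lambda_1(\eta)$, the self-adjoint operator $H_\eta - \sum_l b_l\eta_l$ on $L^2(\mathbb{R}^l)$ has a non-positive eigenvalue, hence $0$ lies in $[\lambda_1(\eta) - \sum_l b_l\eta_l,\infty) \subseteq \mathrm{Spec}(H_\eta - \sum_l b_l\eta_l) + [0,\infty) = \mathrm{Spec}(H_\eta\otimes 1 + 1\otimes(-\partial_s^2) - \sum_l b_l\eta_l)$. This is exactly the point where $k\geq 1$, i.e. $r_\eta < n$, is used: the continuous spectrum $[0,\infty)$ of $-\partial_s^2$ ``fills in the gap'' between the discrete value $\lambda_1(\eta)$ and $\sum_l b_l\eta_l$, so that $0$ is always in the spectrum of $\pi(P)$ regardless of whether $\sum_l b_l\eta_l$ hits an eigenvalue of $H_\eta$ exactly. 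I would then upgrade ``$0\in\mathrm{Spec}(\pi(P))$'' to the existence of a genuine smooth null vector: writing $\mu := \sum_l b_l\eta_l - \lambda_1(\eta) \geq 0$, take the ground state $\psi_0$ of $H_\eta$ and a plane wave $e^{i\sqrt{\mu}\,s}$ (for $\mu>0$), regularized to an $L^2$ Gaussian wave packet $\psi_0\otimes e^{i\sqrt{\mu}\,s}e^{-\varepsilon s^2}$; these give an approximate null sequence, and by a standard spectral/Weyl-sequence argument together with elliptic regularity of $\pi(P)$ in the transverse Rockland sense, one extracts an honest smooth vector in the kernel — or, more cleanly, invokes that for a homogeneous Rockland-type operator membership of $0$ in the spectrum of $\pi(P)$ for a flat-orbit-adjacent $\pi$ already obstructs hypoellipticity via \cite{AMY,HNo}. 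For $\mu=0$ one simply uses $\psi_0\otimes\psi_0$ with $\psi_0$ the ground state in the $s$-variable of a rescaled oscillator, or again a Gaussian packet.

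The main obstacle I anticipate is bookkeeping the polarization and the precise normal form of $\pi(P)$: one must choose a Lie subalgebra subordinate to $\eta$ that simultaneously exhibits the oscillator directions (a symplectic complement where $\omega_\eta$ is non-degenerate, of dimension $2l$) and a genuine abelian ``free'' direction coming from $\ker\omega_\eta|_{\mathfrak{g}_{-1}}$ lifted compatibly with the grading, and check that the cross terms in $\sum_j X_j^2$ and in $i\sum_l b_l Y_l$ assemble exactly into $H_\eta$ plus the claimed transverse Laplacian plus the scalar $\sum_l b_l\eta_l$, with no stray first-order terms. This is essentially the computation in \cite[Section 4]{rothstein} with the variable $s$ retained rather than suppressed; the point of the present section is merely that when $2l = n$ this extra variable is absent, so Proposition~\ref{propc} genuinely requires $r_\eta < n$ and the residual case $2l=n$ must instead be treated by the mean value theorem argument sketched in the surrounding text (if $\sum_l b_l\eta_l > \lambda_1(\eta)$ one still lands on some eigenvalue $\lambda_j(\eta')$ for a nearby $\eta'$ by the intermediate value theorem applied to $\eta'\mapsto \sum_l b_l\eta'_l - \lambda_j(\eta')$, reducing to Proposition~\ref{propb}). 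Once the normal form is pinned down the spectral conclusion is immediate and the proof closes.
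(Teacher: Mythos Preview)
The paper does not give a standalone argument for this proposition; it is read off directly from the necessary-and-sufficient hypoellipticity criterion stated two paragraphs above (imported from Boutet--Grigis--Helffer \cite{BGH} and the Rockland theory): when $r_\eta<\dim\mathfrak g_{-1}$ the condition for hypoellipticity is $\sum_l b_l\eta_l\notin[\lambda(\eta),\infty)$, and the hypothesis of Proposition~\ref{propc} is precisely its negation for this $\eta$.

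Your proposal instead reconstructs the mechanism behind that bullet of the criterion, which is legitimate and more explanatory, but the central step is muddled. The representation you write down on $L^2(\R^l\times\R^k)$, with $\pi(P)$ acting as $H_\eta\otimes 1+1\otimes(-\partial_s^2)-\sum_l b_l\eta_l$, is \emph{reducible}: it is the direct integral over $t\in\R^k$ of irreducibles $\pi_{\eta,t}$ in which the represented operator becomes $H_\eta+|t|^2-\sum_l b_l\eta_l$. In your reducible picture, when $\mu=\sum_l b_l\eta_l-\lambda_1(\eta)>0$ the point $0$ lies only in the \emph{continuous} spectrum, so there is no $L^2$ null vector to ``extract'' and the Gaussian wave-packet / Weyl-sequence step does not produce a smooth kernel vector for that $\pi(P)$. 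The clean fix --- and this is exactly what the construction of $\pi_\lambda$ in \cite[Section~4]{rothstein} does, their parameter $\lambda$ playing the role of your plane-wave momentum $\sqrt{\mu}$ --- is to work directly in a single irreducible component: choose $t$ with $|t|^2=\mu\ge 0$ (here is where $k=n-2l\ge 1$, i.e.\ $r_\eta<n$, is used), so that $\pi_{\eta,t}(P)$ annihilates the ground state $\psi_0$ of $H_\eta$, a bona fide smooth null vector, and Rockland fails. Your closing remark, that the residual case $2l=n$ must be handled separately via an intermediate-value argument reducing to Proposition~\ref{propb}, is correct and is exactly what the paper does in Proposition~\ref{propd}.
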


Since $r_\eta$ is even, this proposition in particular can be applied when $\dim \mathfrak g_{-1}$ is odd. This is the case for Example \ref{n4ex} with $ \mathfrak g = \mathfrak n (4)$. Finally, we have

\begin{prop}
\label{propd}
We assume $\dim \mathfrak g_{-2} \geq 2$. If for some $\eta$ (with $|\eta|=1$) we have $\sum_{l=1}^m b_l \eta_l \geq \lambda(\eta)$, and $\dim \mathfrak g_{-2} \geq 2$, the operator is not hypoelliptic. 
\end{prop}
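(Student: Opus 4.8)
The plan is to reduce Proposition \ref{propd} to Proposition \ref{propc} by showing that when $\dim\mathfrak{g}_{-2}\geq 2$, any $\eta$ at which the hypoellipticity condition fails can be replaced (or connected) to one at which the Kirillov rank is strictly less than $\dim\mathfrak{g}_{-1}$, so that the non-hypoellipticity already established in Proposition \ref{propc} applies. The key observation is that the Pfaffian of $\omega_\eta$, i.e. $\eta\mapsto \mathrm{Pf}(\omega_\eta)$, is a homogeneous polynomial on $\mathfrak{g}_{-2}^*$, and its vanishing locus $\{\mathrm{Pf}(\omega_\eta)=0\}$ is precisely the set of $\eta$ with $r_\eta<\dim\mathfrak{g}_{-1}$ (this only happens when $\dim\mathfrak{g}_{-1}$ is even; if it is odd, Proposition \ref{propc} already covers everything and there is nothing to do). When $\dim\mathfrak{g}_{-2}\geq 2$, this vanishing locus is nonempty — indeed the real projective variety $\{\mathrm{Pf}=0\}\subseteq \mathbb{P}(\mathfrak{g}_{-2}^*)$ is either all of $\mathbb{P}(\mathfrak{g}_{-2}^*)$ (in which case $r_\eta<\dim\mathfrak{g}_{-1}$ for all $\eta$ and we are immediately in the situation of Proposition \ref{propc}) or a proper hypersurface whose complement is open and dense.

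Assuming then that there is a nonempty open cone $U\subseteq \mathfrak{g}_{-2}^*$ on which $r_\eta=\dim\mathfrak{g}_{-1}$, suppose for contradiction that $P$ is hypoelliptic. The first step is to record what hypoellipticity forces along the boundary between $U$ and its complement. On $U$, the spectrum of $H_\eta$ is $\{\lambda_j(\eta)\}_{j\geq 1}$, and by homogeneity each $\lambda_j$ is a positive continuous function on $U$, homogeneous of degree $1$. The hypoellipticity criterion on $U$ says $\sum_l b_l\eta_l\notin\{\lambda_j(\eta)\}_{j\geq1}$ for all $\eta\in U$. Now take a point $\eta_0$ in the closure of $U$ lying on the vanishing locus of the Pfaffian, i.e. $r_{\eta_0}<\dim\mathfrak{g}_{-1}$. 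The second step, which I expect to be the technical heart of the argument, is a semicontinuity/limiting statement for the bottom of the spectrum: as $\eta\to\eta_0$ within $U$, the harmonic oscillators $H_\eta$ degenerate (some frequencies tend to zero as the rank drops), and $\lambda_1(\eta)=\lambda(\eta)\to \lambda(\eta_0)$, the bottom of the spectrum of the degenerate-rank oscillator $H_{\eta_0}$; moreover the higher eigenvalues $\lambda_2(\eta),\lambda_3(\eta),\dots$ also converge (to $\lambda_2(\eta_0),\dots$, or accumulate at $\lambda(\eta_0)$ from above). This is exactly where the mean value theorem enters in the spirit of the author's remark: the continuous function $\eta\mapsto \sum_l b_l\eta_l - \lambda_1(\eta)$ is negative near $\eta_0$ on $U$ by hypoellipticity, while the hypothesis $\sum_l b_l(\eta_0)_l\geq\lambda(\eta_0)$ — which we may assume holds at $\eta_0$, after possibly moving $\eta_0$ along the closure of $U$ using homogeneity, continuity of $\lambda$, and the fact that the failure set $\{\sum b_l\eta_l\geq \lambda(\eta)\}$ is a closed cone with nonempty interior whenever it meets $U$ — says the same function is $\geq 0$ at $\eta_0$. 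By the intermediate value theorem there is an intermediate $\eta_1$ with $\sum_l b_l(\eta_1)_l=\lambda_j(\eta_1)$ for some $j$, and if $\eta_1$ can be chosen with $r_{\eta_1}<\dim\mathfrak{g}_{-1}$ we invoke Proposition \ref{propc}, otherwise with $\eta_1\in U$ we have hit an eigenvalue, contradicting hypoellipticity on $U$ directly.

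The cleaner route, and probably the one to write up, is: let $\eta_\ast$ be the point of $|\eta|=1$ realizing $\sum_l b_l\eta_l\geq\lambda(\eta)$ from the hypothesis; consider the great-circle (or any) path $t\mapsto\eta(t)$ on the unit sphere of $\mathfrak{g}_{-2}^*$ from $\eta_\ast$ into the open dense set $U$ where the rank is maximal (possible since $\dim\mathfrak{g}_{-2}^*\geq 2$ makes the sphere connected and $U\cap S$ dense). Along this path the continuous function $g(t):=\sum_l b_l\eta(t)_l - \lambda(\eta(t))$ satisfies $g(0)\geq 0$. If $g$ ever hits $0$ at some $t$ with $r_{\eta(t)}<\dim\mathfrak{g}_{-1}$ then Proposition \ref{propb} applies; otherwise, since eventually we are in $U$ where hypoellipticity would require $g(t)<0$ AND $\sum b_l\eta(t)_l$ avoids all higher $\lambda_j$, we either cross $\lambda_1$ at a point of $U$ (contradicting hypoellipticity) or cross it exactly on the rank-drop locus (Proposition \ref{propb}), or $g$ stays $\geq0$ all the way into $U$, in which case $\sum b_l\eta(t)_l\geq\lambda_1(\eta(t))$ with $r_{\eta(t)}<\dim\mathfrak{g}_{-1}$ being false there — but then $\sum b_l\eta(t)_l\geq\lambda_1(\eta(t))=\lambda(\eta(t))$ at a maximal-rank point forces, by the spectral criterion, membership in $[\lambda(\eta),\infty)$ which must avoid the discrete set $\{\lambda_j\}$, an impossibility since $[\lambda_1,\infty)$ necessarily contains $\lambda_2$. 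The main obstacle is making rigorous the claim that $\lambda(\eta)$ and the higher eigenvalues vary continuously up to and including the rank-drop locus, i.e. controlling the degeneration of the family of harmonic oscillators $H_\eta$; this is a perturbation-theoretic fact about quadratic forms with vanishing frequencies that one should cite from Boutet de Monvel–Grigis–Helffer or establish by an explicit normal form for $\omega_\eta$, diagonalizing the antisymmetric form and reading off the frequencies as (continuous) square roots of the nonzero eigenvalues of $-\omega_\eta^2$.
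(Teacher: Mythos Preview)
Your approach contains a genuine gap, and it is also far more complicated than the paper's argument.

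\textbf{The gap.} Two of your claims fail. First, you assert that when $\dim\mathfrak{g}_{-2}\geq 2$ the vanishing locus $\{\mathrm{Pf}(\omega_\eta)=0\}$ is nonempty. This is false: for a polycontact step-two Lie algebra (these exist with $\dim\mathfrak{g}_{-2}\geq 2$, e.g.\ the quaternionic Heisenberg algebra), the Kirillov form $\omega_\eta$ is nondegenerate for every $\eta\neq 0$, so the Pfaffian has no real zeros and your rank-drop locus is empty. Second, in your ``cleaner route'' you write that at a maximal-rank point ``hypoellipticity would require $g(t)<0$'', and then that $g(t)\geq 0$ in $U$ gives ``membership in $[\lambda(\eta),\infty)$ which must avoid the discrete set $\{\lambda_j\}$, an impossibility''. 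Neither statement is correct: at a maximal-rank point the criterion only forbids $\sum_l b_l\eta_l$ from landing in the \emph{discrete} set $\{\lambda_j(\eta)\}$, not from lying in $[\lambda_1(\eta),\infty)$. A single number $\geq\lambda_1$ can perfectly well miss all the $\lambda_j$. So nothing in your case analysis rules out $g$ remaining strictly positive along the entire path you chose.

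\textbf{What the paper does instead.} The paper's proof bypasses the rank analysis entirely and reduces directly to Proposition~\ref{propb} (the equality case), not Proposition~\ref{propc}. After disposing of the cases covered by Propositions~\ref{propb} and~\ref{propc}, one may assume $\sum_l b_l\eta_l>\lambda(\eta)$. Since $\dim\mathfrak{g}_{-2}\geq 2$, the linear functional $\eta\mapsto\sum_l b_l\eta_l$ vanishes somewhere on the unit sphere, say at $\eta'$; there $\sum_l b_l\eta'_l=0<\lambda(\eta')$. The function $\eta\mapsto\sum_l b_l\eta_l-\lambda(\eta)$ is continuous on the connected sphere, positive at $\eta$ and negative at $\eta'$, so by the intermediate value theorem it vanishes at some $\eta''$, and Proposition~\ref{propb} applies. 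No discussion of the Pfaffian, the rank-drop locus, or degeneration of the harmonic oscillators is needed. Your argument is salvageable if you choose the endpoint of your path to be a point where the linear functional vanishes (rather than merely a point of $U$), but that is precisely the paper's observation.
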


\begin{proof}
Due to the previous propositions, it remains to treat the case when $\sum_{l=1}^m b_l \eta_l> \lambda(\eta)$ and $r_\eta =\dim \mathfrak g_{-1}$. Since $\dim\mathfrak{g}_{-2} \geq 2$, there exists an $\eta'$ with $|\eta'|=1$ such that
$$
\sum_{l=1}^m b_l \eta'_l=0\,.
$$
By the mean value theorem on the connected sphere of dimension $\dim\mathfrak{g}_{-2}-1$, there exists an $\eta''$ such that $|\eta''|=1$ and $ \sum_{l=1}^m b_l \eta''_l =\lambda(\eta'')$. We can then apply Proposition \ref{propb}.
\end{proof}
It remains to consider the case when $\dim \mathfrak g_{-2}=1$. The case when $r_\eta < \dim \mathfrak g_{-1}$ is treated in Proposition \ref{propc} and the case when $r_\eta = \dim \mathfrak g_{-1}$ corresponds to the Heisenberg algebra.

\vspace{-1mm}
\end{document}